\title{Rokhlin dimension: absorption of model actions}
\author{Gábor Szabó}
\address{Department of Mathematics, KU Leuven, Celestijnenlaan 200b, box 2400\linebreak
\phantom{-}\hspace{2mm} B-3001 Leuven, Belgium}
\email{gabor.szabo@kuleuven.be}
\thanks{This work was supported by the Danish National Research Foundation through the Centre for Symmetry and Deformation (DNRF92), and the European Union's Horizon 2020 research and innovation programme under the Marie Sklodowska-Curie grant agreement 746272.}
\subjclass[2010]{46L55}
\numberwithin{equation}{section}
\begin{document}

\renewcommand\matrix[1]{\left(\begin{array}{*{10}{c}} #1 \end{array}\right)}  
\newcommand\set[1]{\left\{#1\right\}}  

\newcommand{\IA}[0]{\mathbb{A}} \newcommand{\IB}[0]{\mathbb{B}}
\newcommand{\IC}[0]{\mathbb{C}} \newcommand{\ID}[0]{\mathbb{D}}
\newcommand{\IE}[0]{\mathbb{E}} \newcommand{\IF}[0]{\mathbb{F}}
\newcommand{\IG}[0]{\mathbb{G}} \newcommand{\IH}[0]{\mathbb{H}}
\newcommand{\II}[0]{\mathbb{I}} \renewcommand{\IJ}[0]{\mathbb{J}}
\newcommand{\IK}[0]{\mathbb{K}} \newcommand{\IL}[0]{\mathbb{L}}
\newcommand{\IM}[0]{\mathbb{M}} \newcommand{\IN}[0]{\mathbb{N}}
\newcommand{\IO}[0]{\mathbb{O}} \newcommand{\IP}[0]{\mathbb{P}}
\newcommand{\IQ}[0]{\mathbb{Q}} \newcommand{\IR}[0]{\mathbb{R}}
\newcommand{\IS}[0]{\mathbb{S}} \newcommand{\IT}[0]{\mathbb{T}}
\newcommand{\IU}[0]{\mathbb{U}} \newcommand{\IV}[0]{\mathbb{V}}
\newcommand{\IW}[0]{\mathbb{W}} \newcommand{\IX}[0]{\mathbb{X}}
\newcommand{\IY}[0]{\mathbb{Y}} \newcommand{\IZ}[0]{\mathbb{Z}}

\newcommand{\CA}[0]{\mathcal{A}} \newcommand{\CB}[0]{\mathcal{B}}
\newcommand{\CC}[0]{\mathcal{C}} \newcommand{\CD}[0]{\mathcal{D}}
\newcommand{\CE}[0]{\mathcal{E}} \newcommand{\CF}[0]{\mathcal{F}}
\newcommand{\CG}[0]{\mathcal{G}} \newcommand{\CH}[0]{\mathcal{H}}
\newcommand{\CI}[0]{\mathcal{I}} \newcommand{\CJ}[0]{\mathcal{J}}
\newcommand{\CK}[0]{\mathcal{K}} \newcommand{\CL}[0]{\mathcal{L}}
\newcommand{\CM}[0]{\mathcal{M}} \newcommand{\CN}[0]{\mathcal{N}}
\newcommand{\CO}[0]{\mathcal{O}} \newcommand{\CP}[0]{\mathcal{P}}
\newcommand{\CQ}[0]{\mathcal{Q}} \newcommand{\CR}[0]{\mathcal{R}}
\newcommand{\CS}[0]{\mathcal{S}} \newcommand{\CT}[0]{\mathcal{T}}
\newcommand{\CU}[0]{\mathcal{U}} \newcommand{\CV}[0]{\mathcal{V}}
\newcommand{\CW}[0]{\mathcal{W}} \newcommand{\CX}[0]{\mathcal{X}}
\newcommand{\CY}[0]{\mathcal{Y}} \newcommand{\CZ}[0]{\mathcal{Z}}

\newcommand{\FA}[0]{\mathfrak{A}} \newcommand{\FB}[0]{\mathfrak{B}}
\newcommand{\FC}[0]{\mathfrak{C}} \newcommand{\FD}[0]{\mathfrak{D}}
\newcommand{\FE}[0]{\mathfrak{E}} \newcommand{\FF}[0]{\mathfrak{F}}
\newcommand{\FG}[0]{\mathfrak{G}} \newcommand{\FH}[0]{\mathfrak{H}}
\newcommand{\FI}[0]{\mathfrak{I}} \newcommand{\FJ}[0]{\mathfrak{J}}
\newcommand{\FK}[0]{\mathfrak{K}} \newcommand{\FL}[0]{\mathfrak{L}}
\newcommand{\FM}[0]{\mathfrak{M}} \newcommand{\FN}[0]{\mathfrak{N}}
\newcommand{\FO}[0]{\mathfrak{O}} \newcommand{\FP}[0]{\mathfrak{P}}
\newcommand{\FQ}[0]{\mathfrak{Q}} \newcommand{\FR}[0]{\mathfrak{R}}
\newcommand{\FS}[0]{\mathfrak{S}} \newcommand{\FT}[0]{\mathfrak{T}}
\newcommand{\FU}[0]{\mathfrak{U}} \newcommand{\FV}[0]{\mathfrak{V}}
\newcommand{\FW}[0]{\mathfrak{W}} \newcommand{\FX}[0]{\mathfrak{X}}
\newcommand{\FY}[0]{\mathfrak{Y}} \newcommand{\FZ}[0]{\mathfrak{Z}}

\newcommand{\fc}[0]{\mathfrak{c}}
\newcommand{\fp}[0]{\mathfrak{p}}
\newcommand{\fn}[0]{\mathfrak{n}}
\newcommand{\mfi}[0]{\mathfrak{i}}
\newcommand{\mfj}[0]{\mathfrak{j}}

\newcommand{\Ra}[0]{\Rightarrow}
\newcommand{\La}[0]{\Leftarrow}
\newcommand{\LRa}[0]{\Leftrightarrow}

\renewcommand{\phi}[0]{\varphi}
\newcommand{\eps}[0]{\varepsilon}

\newcommand{\ord}[0]{\operatorname{ord}}		
\newcommand{\GL}[0]{\operatorname{GL}}
\newcommand{\supp}[0]{\operatorname{supp}}	
\newcommand{\id}[0]{\operatorname{id}}		
\newcommand{\eins}[0]{\mathbf{1}}			
\newcommand{\diag}[0]{\operatorname{diag}}
\newcommand{\prim}[0]{\operatorname{Prim}}
\newcommand{\ad}[0]{\operatorname{Ad}}
\newcommand{\ext}[0]{\operatorname{Ext}}
\newcommand{\ev}[0]{\operatorname{ev}}
\newcommand{\fin}[0]{{\subset\!\!\!\subset}}
\newcommand{\diam}[0]{\operatorname{diam}}
\newcommand{\Hom}[0]{\operatorname{Hom}}
\newcommand{\Aut}[0]{\operatorname{Aut}}
\newcommand{\del}[0]{\partial}
\newcommand{\dimnuc}[0]{\dim_{\mathrm{nuc}}}
\newcommand{\dr}[0]{\operatorname{dr}}
\newcommand{\dimrok}[0]{\dim_{\mathrm{Rok}}}
\newcommand{\dimrokc}[0]{\dim_{\mathrm{Rok}}^{\mathrm{c}}}
\newcommand{\asdim}[0]{\operatorname{asdim}}
\newcommand{\dimnuceins}[0]{\dimnuc^{\!+1}}
\newcommand{\dreins}[0]{\dr^{\!+1}}
\newcommand{\dimrokeins}[0]{\dimrok^{\!+1}}
\newcommand{\mdim}[0]{\operatorname{mdim}}
\newcommand*\onto{\ensuremath{\joinrel\relbar\joinrel\twoheadrightarrow}} 
\newcommand*\into{\ensuremath{\lhook\joinrel\relbar\joinrel\rightarrow}}  
\newcommand{\dst}[0]{\displaystyle}
\newcommand{\cstar}[0]{$\mathrm{C}^*$}
\newcommand{\dist}[0]{\operatorname{dist}}
\newcommand{\ue}[1]{{~\approx_{\mathrm{u},#1}}~}
\newcommand{\End}[0]{\operatorname{End}}
\newcommand{\Ell}[0]{\operatorname{Ell}}
\newcommand{\ann}[0]{\operatorname{Ann}}
\newcommand{\strict}[0]{\stackrel{\text{\tiny str}}{\longrightarrow}}
\newcommand{\cc}[0]{\simeq_{\mathrm{cc}}}
\newcommand{\scc}[0]{\simeq_{\mathrm{scc}}}
\newcommand{\vscc}[0]{\simeq_{\mathrm{vscc}}}
\newcommand{\inter}[1]{\ensuremath{#1^{\mathrm{o}}}}

\newcommand{\greater}[0]{>}
\newcommand{\vslash}[0]{|}
\newcommand{\norm}[0]{\ensuremath{\|}}
\newcommand{\msout}[1]{\text{\sout{$#1$}}}

\newcommand{\cf}[2]{cf.\ {\cite[#2]{#1}}}
\renewcommand{\see}[2]{see {\cite[#2]{#1}}}

\renewcommand{\emph}[1]{{\it #1}}

\newtheorem{satz}{Satz}[section]		

\newaliascnt{corCT}{satz}
\newtheorem{cor}[corCT]{Corollary}
\aliascntresetthe{corCT}
\providecommand*{\corCTautorefname}{Corollary}
\newaliascnt{lemmaCT}{satz}
\newtheorem{lemma}[lemmaCT]{Lemma}
\aliascntresetthe{lemmaCT}
\providecommand*{\lemmaCTautorefname}{Lemma}
\newaliascnt{propCT}{satz}
\newtheorem{prop}[propCT]{Proposition}
\aliascntresetthe{propCT}
\providecommand*{\propCTautorefname}{Proposition}
\newaliascnt{theoremCT}{satz}
\newtheorem{theorem}[theoremCT]{Theorem}
\aliascntresetthe{theoremCT}
\providecommand*{\theoremCTautorefname}{Theorem}
\newtheorem*{theoreme}{Theorem}
\newtheorem*{core}{Corollary}

\theoremstyle{definition}

\newaliascnt{conjectureCT}{satz}
\newtheorem{conjecture}[conjectureCT]{Conjecture}
\aliascntresetthe{conjectureCT}
\providecommand*{\conjectureCTautorefname}{Conjecture}
\newaliascnt{defiCT}{satz}
\newtheorem{defi}[defiCT]{Definition}
\aliascntresetthe{defiCT}
\providecommand*{\defiCTautorefname}{Definition}
\newtheorem*{defie}{Definition}
\newaliascnt{notaCT}{satz}
\newtheorem{nota}[notaCT]{Notation}
\aliascntresetthe{notaCT}
\providecommand*{\notaCTautorefname}{Notation}
\newtheorem*{notae}{Notation}
\newaliascnt{remCT}{satz}
\newtheorem{rem}[remCT]{Remark}
\aliascntresetthe{remCT}
\providecommand*{\remCTautorefname}{Remark}
\newtheorem*{reme}{Remark}
\newaliascnt{exampleCT}{satz}
\newtheorem{example}[exampleCT]{Example}
\aliascntresetthe{exampleCT}
\providecommand*{\exampleCTautorefname}{Example}
\newaliascnt{questionCT}{satz}
\newtheorem{question}[questionCT]{Question}
\aliascntresetthe{questionCT}
\providecommand*{\questionCTautorefname}{Question}

\newcounter{theoremintro}
\renewcommand*{\thetheoremintro}{\Alph{theoremintro}}
\newaliascnt{theoremiCT}{theoremintro}
\newtheorem{theoremi}[theoremiCT]{Theorem}
\aliascntresetthe{theoremiCT}
\providecommand*{\theoremiCTautorefname}{Theorem}
\newaliascnt{coriCT}{theoremintro}
\newtheorem{cori}[coriCT]{Corollary}
\aliascntresetthe{coriCT}
\providecommand*{\coriCTautorefname}{Corollary}


\begin{abstract} 
In this paper, we establish a connection between Rokhlin dimension and the absorption of certain model actions on strongly self-absorbing \cstar-algebras.
Namely, as to be made precise in the paper, let $G$ be a well-behaved locally compact group.
If $\CD$ is a strongly self-absorbing \cstar-algebra, and $\alpha: G\curvearrowright A$ is an action on a separable, $\CD$-absorbing \cstar-algebra that has finite Rokhlin dimension with commuting towers, then $\alpha$ tensorially absorbs every semi-strongly self-absorbing $G$-actions on $\CD$.
In particular, this is the case when $\alpha$ satisfies any version of what is called the Rokhlin property, such as for $G=\IR$ or $G=\IZ^k$.
This contains several existing results of similar nature as special cases.
We will in fact prove a more general version of this theorem, which is intended for use in subsequent work.
We will then discuss some non-trivial applications.
Most notably it is shown that for any $k\geq 1$ and on any strongly self-absorbing Kirchberg algebra, there exists a unique $\IR^k$-action having finite Rokhlin dimension with commuting towers up to (very strong) cocycle conjugacy.
\end{abstract}

\maketitle

\tableofcontents


\section*{Introduction}

\noindent
The present work is a continuation of the author's quest to study fine structure and classification of certain \cstar-dynamics by employing ideas related to tensorial absorption.
In previous work, the theory of (semi-)strongly self-absorbing actions on \cstar-algebras \cite{Szabo18ssa, Szabo18ssa2, Szabo17ssa3} was developed, closely following the important results established in the classical theory of strongly self-absorbing \cstar-algebras by Toms--Winter and others \cite{TomsWinter07, Kirchberg04, DadarlatWinter09}.
Strongly self-absorbing \cstar-algebras have historically emerged by example \cite{JiangSu99}, and by now play a central role in the structure theory of simple nuclear \cstar-algebras; see for example \cite{KirchbergPhillips00, Rordam04, ElliottToms08, Winter10, WinterZacharias10, Winter12, Winter14Lin, MatuiSato12, MatuiSato14UHF, BBSTWW, CETWW}.
Roughly speaking, a tensorial factorization of the form $A\cong A\otimes\CD$ --- for a given \cstar-algebra $A$ and a strongly self-absorbing \cstar-algebra $\CD$ --- provides sufficient space to perform non-trival manipulations on elements inside $A$, which often gives rise to structural properties of particular interest for classification.
The underlying motivation behind \cite{Szabo18ssa, Szabo18ssa2, Szabo17ssa3} is the idea that this kind of phenomenon should persist at the level of \cstar-dynamics if one is interested in classification of group actions up to cocycle conjugacy; in fact some much earlier work \cite{Kishimoto01, Kishimoto02, IzumiMatui10, GoldsteinIzumi11, MatuiSato12_2, MatuiSato14, Izumi12OWR} has (sometimes implicitly) used this idea to reasonable success.
It was further demonstrated in \cite{Szabo17ssa3, Szabo18kp} how this approach can indeed give rise to new insights about classification or rigidity of group actions on certain \cstar-algebras, in particular strongly self-absorbing ones.

Starting from Connes' groundbreaking work \cite{Connes75, Connes76, Connes77} on injective factors, which involved classification of single automorphisms, the Rokhlin property in its various forms became a key tool to classify actions of amenable groups on von Neumann algebras \cite{Jones80, Ocneanu85, SutherlandTakesaki89, KawahigashiSutherlandTakesaki92, KatayamaSutherlandTakesaki98, Masuda07}.
It did not take long for these ideas to reach the realm of \cstar-algebras.
Initially appearing in works of Herman--Jones \cite{HermanJones82} and Herman--Ocneanu \cite{HermanOcneanu84}, the Rokhlin property for single automorphisms and its applications for classification were perfected in works of Kishimoto and various collaborators \cite{BratteliKishimotoRordamStormer93, Kishimoto95, BratteliEvansKishimoto95, Kishimoto96, EvansKishimoto97, Kishimoto98, Kishimoto98II, ElliottEvansKishimoto98, BratteliKishimoto00, Nakamura00}.
Further work pushed these techniques to actions of infinite higher-rank groups as well \cite{Nakamura99, KatsuraMatui08, Matui08, Matui10, Matui11, IzumiMatui10, Izumi12OWR}.
The case of finite groups was treated in work of Izumi \cite{Izumi04, Izumi04II}, where it was shown that such actions with the Rokhlin property have a particularly rigid theory; see also \cite{Santiago15, GardellaSantiago16, Gardella14_1, Gardella14_2, Gardella17, BarlakSzabo16ss, BarlakSzaboVoigt16}.
In contrast to von Neumann algebras, however, the Rokhlin property for actions on \cstar-algebras has too many obstructions in general, ranging from obvious ones like lack of projections to more subtle ones of $K$-theoretic nature.

Rokhlin dimension is a notion of dimension for actions of certain groups on \cstar-algebras and was first introduced by Hirshberg--Winter--Zacharias \cite{HirshbergWinterZacharias15}. 
Several natural variants of Rokhlin dimension have been introduced, and all of them have in common that they generalize (to some degree) the Rokhlin property for actions of either finite groups or the integers. 
The theory has been extended and applied in many following works such as \cite{Szabo15plms, HirshbergPhillips15, SzaboWuZacharias17, Gardella17, HirshbergSzaboWinterWu17, Liao16, Liao17, BrownTikuisisZelenberg17, GardellaKalantarLupini17}.
In short, the advantage of working with Rokhlin dimension is that it is both more prevalent and more flexible than the Rokhlin property, but is yet often strong enough to deduce interesting structural properties of the crossed product, such as finite nuclear dimension \cite{WinterZacharias10}. 

A somewhat stronger version of Rokhlin dimension, namely with commuting towers, has been considered from the very beginning as a variant that was also compatible with respect to the absorption of strongly self-absorbing \cstar-algebras.
Although the assumption of commuting towers initially only looked like a minor technical assumption, it was eventually discovered that it can make a major difference in some cases such as for actions of finite groups \cite{HirshbergPhillips15}.

The purpose of this paper is to showcase a decisive connection between finite Rokhlin dimension with commuting towers and the absorption of semi-strongly self-absorbing model actions.
The following describes a variant of the main result; see \autoref{thm:dimrokc-main}:

\begin{theoremi} \label{Thm-A}
Let $G$ be a second-countable, locally compact group and $N\subset G$ a closed, normal subgroup.
Suppose that the quotient $G/N$ contains a discrete, normal, cocompact subgroup that is residually finite and has a box space with finite asymptotic dimension.
Let $A$ be a separable \cstar-algebra with an action $\alpha: G\curvearrowright A$.
Let $\gamma: G\curvearrowright\CD$ be a semi-strongly self-absorbing action that is unitarily regular.
Suppose that $\alpha\vslash_N$ is $\gamma\vslash_N$-absorbing.
If the Rokhlin dimension of $\alpha$ with commuting towers relative to $N$ is finite, then it follows that $\alpha$ is $\gamma$-absorbing.
\end{theoremi}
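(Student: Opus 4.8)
The plan is to reduce the statement to the sequence-algebra characterization of $\gamma$-absorption, and then to assemble the required equivariant model embedding from the two pieces of structure we are handed. By the theory of (semi-)strongly self-absorbing actions \cite{Szabo18ssa, Szabo18ssa2, Szabo17ssa3}, and since $\gamma$ is semi-strongly self-absorbing and unitarily regular, $\alpha$ is $\gamma$-absorbing as soon as there is a unital $G$-equivariant $\ast$-homomorphism from $(\CD,\gamma)$ into the induced dynamical system on the (continuous part of the) central sequence algebra $F_{\infty,\alpha}(A)$; equivalently, it suffices to produce, for every $\eps>0$ and all $F\fin A$, $S\fin\CD$ and $K\fin G$ compact, a c.p.\ map $\CD\to A$ that is $\eps$-multiplicative on $S$, $\eps$-central relative to $F$, $\eps$-equivariant relative to $(K,S)$, and $\eps$-unital. (The unitary regularity of $\gamma$ is what upgrades such an approximate family --- threaded with $\gamma$-cocycles in the semi-strongly self-absorbing case --- to an honest equivariant embedding.) So the task is to build such maps.

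Two sources of structure are available. First, since $\alpha\vslash_N$ is $\gamma\vslash_N$-absorbing, we have $(A,\alpha\vslash_N)\cc(A\otimes\CD^{\otimes\infty},\alpha\vslash_N\otimes(\gamma\vslash_N)^{\otimes\infty})$, which yields (a whole sequence of mutually commuting copies of) a unital $N$-equivariant $\ast$-homomorphism $\theta\colon(\CD,\gamma\vslash_N)\to F_{\infty,\alpha}(A)$. Second, finiteness of the Rokhlin dimension with commuting towers of $\alpha$ relative to $N$ --- say $d:=\dimrokc(\alpha \text{ rel } N)$ --- gives, for the distinguished discrete, normal, cocompact, residually finite subgroup $H\le G/N$ and a chain $H=H_0\supseteq H_1\supseteq\cdots$ of finite-index normal subgroups with $\bigcap_n H_n=\eins$, $G$-equivariant c.p.c.\ order zero maps $\varphi^{(n)}_0,\dots,\varphi^{(n)}_d\colon C(Q_n)\to F_{\infty,\alpha}(A)$, where $Q_n:=(G/N)/H_n$ is a compact group on which $G$ acts by translation through $G\to Q_n$, where $\sum_{i=0}^d\varphi^{(n)}_i(\eins)=\eins$, and --- crucially --- where the ranges of $\varphi^{(n)}_0,\dots,\varphi^{(n)}_d$ pairwise commute. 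Using that absorption of $\CD$ is stable under passing to relative commutants inside sequence algebras, and that we have infinitely many commuting copies of $\theta$ at our disposal, I would first arrange (after one reindexing) that the tower system and $\theta$ mutually commute inside $F_{\infty,\alpha}(A)$.

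The core is then a ``spread out and reassemble'' construction. Fixing $(F,S,K,\eps)$, choose $n$ large and --- using that the box space $\square H$ has finite asymptotic dimension --- a partition of unity $(f_j)_j$ on $Q_n$ subordinate to a cover by small open sets of uniformly bounded overlap, with representatives $g_j\in G$ mapping into the respective sets. For $0\le i\le d$ set
\[
\Phi^{(n)}_i(x)\ :=\ \sum_j \varphi^{(n)}_i(f_j)^{1/2}\,\alpha_{\infty,g_j}\big(\theta(x)\big)\,\varphi^{(n)}_i(f_j)^{1/2},\qquad x\in\CD,
\]
and $\Phi^{(n)}:=\sum_{i=0}^d\Phi^{(n)}_i$. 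Since $\theta$ is $N$-equivariant, replacing $g_j$ by another representative of its $Q_n$-coset only costs a translation by $N$ (absorbed by $\theta$) composed with a translation from the small set supporting $f_j$, so $\Phi^{(n)}$ is approximately $G$-equivariant relative to $(K,S)$, with error tending to $0$ as the cover is refined and $n\to\infty$; this is where the asymptotic dimension of $\square H$ enters, guaranteeing fine bounded-overlap covers uniformly in $n$ so that the reassembly estimates are $n$-independent. Approximate unitality is immediate from $\Phi^{(n)}(\eins)=\sum_i\varphi^{(n)}_i(\eins)=\eins$, and $\eps$-centrality holds as everything lives in $F_{\infty,\alpha}(A)$. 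For approximate multiplicativity one uses that each $\Phi^{(n)}_i$ is, up to the usual order zero defect, a cut-down of the $\ast$-homomorphism $x\mapsto\sum_j\varphi^{(n)}_i(f_j)^{1/2}\alpha_{\infty,g_j}(\theta(x))\varphi^{(n)}_i(f_j)^{1/2}$, so $\Phi^{(n)}_i(x)\Phi^{(n)}_i(y)\approx\varphi^{(n)}_i(\eins)\,\Phi^{(n)}_i(xy)$; and for $i\ne i'$ the commuting-towers hypothesis together with the mutual commutation of $\theta$ and the $\varphi$'s lets us slide the cut-downs past one another and, using that only nearby indices $j,j'$ contribute and that $\theta$-parts at nearby $g_j,g_{j'}$ approximately agree, obtain $\Phi^{(n)}_i(x)\Phi^{(n)}_{i'}(y)\approx\varphi^{(n)}_i(\eins)^{1/2}\varphi^{(n)}_{i'}(\eins)^{1/2}\cdot(\text{cut-down of }\theta(xy))$. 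Summing over $i,i'$ and invoking $\sum_i\varphi^{(n)}_i(\eins)=\eins$ yields $\Phi^{(n)}(x)\Phi^{(n)}(y)\approx\Phi^{(n)}(xy)$, and a diagonal-sequence argument then produces the desired approximate family; the reduction of the first paragraph finishes the proof.

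I expect the main obstacle to be the reassembly step, for three intertwined reasons: (i) the induced $G$-action on the sequence algebra is not point-norm continuous, so one must work consistently in its continuous part and verify that $\theta$ and all the $\varphi^{(n)}_i$ live there and that $g\mapsto\alpha_{\infty,g}(\theta(x))$ is regular enough along the small translates used above; (ii) the finite quotients $Q_n$ do not stabilize, so the partition-of-unity/reassembly estimates must be made uniform in $n$, and this is precisely where the finite asymptotic dimension of $\square H$ is indispensable --- it supplies the bounded-color, uniformly-separated tilings that substitute for Følner averaging when $H$ is not effectively amenable; and (iii) passing from the approximate, $\gamma$-cocycle-twisted family to an honest unital equivariant $\ast$-homomorphism requires the unitary regularity of $\gamma$ together with the cocycle-category machinery of \cite{Szabo18ssa, Szabo18ssa2, Szabo17ssa3}.
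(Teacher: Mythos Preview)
Your reassembly idea is the right one and closely parallels what the paper does in its main technical lemma, but the proof has a genuine gap in the multiplicativity step. The assertion that $\Phi^{(n)}=\sum_{i=0}^d\Phi^{(n)}_i$ is approximately a $*$-homomorphism is false in general: a system of c.p.c.\ order zero maps $\Phi_0,\dots,\Phi_d$ with pairwise commuting ranges and $\sum_i\Phi_i(\eins)=\eins$ almost never sums to a $*$-homomorphism. For a concrete obstruction, take $\Phi_0(x)=\tfrac12(x\otimes\eins)$ and $\Phi_1(x)=\tfrac12(\eins\otimes x)$ on $M_2\to M_2\otimes M_2$; then $\Phi_0+\Phi_1$ is not multiplicative. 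Your heuristic for the cross terms (``$\Phi_i(x)\Phi_{i'}(y)\approx\varphi_i(\eins)^{1/2}\varphi_{i'}(\eins)^{1/2}\cdot(\text{cut-down of }\theta(xy))$'') does not close up: even if each cross term were of this shape, summing $\varphi_i(\eins)^{1/2}\varphi_{i'}(\eins)^{1/2}$ over $i,i'$ gives $(\sum_i\varphi_i(\eins)^{1/2})^2$, not $\eins$, and the diagonal terms give $\sum_i\varphi_i(\eins)\Phi_i(xy)$ rather than $\sum_i\Phi_i(xy)$.

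The paper's remedy is structural rather than estimative. The universal object receiving such a system of commuting order zero maps is the $(d{+}1)$-fold join $\CD^{\star(d+1)}$, so the reassembly only yields a unital equivariant $*$-homomorphism from $(\CD^{\star(d+1)},\gamma^{\star(d+1)})$ into the central sequence algebra, not from $(\CD,\gamma)$. The missing ingredient is that $\gamma^{\star(d+1)}$ is itself $\gamma$-absorbing; this is proved by writing $\CD^{\star(n+1)}\cong\CD\star\CD^{\star n}$ as an equivariant extension with $\gamma$-absorbing ideal and quotient, and then invoking that the class of $\gamma$-absorbing systems is closed under extensions. \emph{That} closure property is precisely where unitary regularity of $\gamma$ enters --- not, as you suggest, in some cocycle-untwisting step at the end. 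In fact the paper separates the two sources of ``colors'' you are merging: the box-space asymptotic dimension governs the partition of unity on $G/N$ and produces order zero maps (hence a map out of $\CD^{\star(\asdim+1)}$) into the central sequence algebra of $\CC(G/\CS)\otimes A$; the Rokhlin dimension with commuting towers then enters separately, through a map out of $\CC(G/\CS)^{\star(\dim_{\mathrm{Rok}}^{\mathrm{c}}+1)}$ and a second extension/induction argument, to pass from $\CC(G/\CS)\otimes A$ back to $A$. Your single-shot combination of the two collapses exactly at the multiplicativity step.
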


Since many assumptions in this theorem are fairly technical at first glance, it may be helpful for the reader to keep in mind some special cases.
For example, the above assumptions on the pair $N\subset G$ are satisfied when the quotient $G/N$ above is isomorphic to either $\IR$ or $\IZ$.
In this case, the theorem states that as long as the action $\alpha$ satisfies a suitable Rokhlin-type criterion relative to $N$, tensorial absorption of the $G$-action $\gamma$ can be detected by restricting to the $N$-actions, even though this restriction precedure (a priori) comes with great loss of dynamical information.
This is most apparent when the normal subgroup $N$ is trivial, which is yet another important special case; see \autoref{cor:dimrokc-cor}:

\begin{cori} \label{Cor-B}
Let $G$ be a second-countable, locally compact group.
Suppose that $G$ contains a discrete, normal, cocompact subgroup that is residually finite and has a box space with finite asymptotic dimension.
Let $A$ be a separable \cstar-algebra with an action $\alpha: G\curvearrowright A$.
Suppose that $\CD$ is a strongly self-absorbing \cstar-algebra with $A\cong A\otimes\CD$.
If the Rokhlin dimension of $\alpha$ with commuting towers is finite, then it follows that $\alpha$ is $\gamma$-absorbing for every semi-strongly self-absorbing action $\gamma: G\curvearrowright\CD$.
\end{cori}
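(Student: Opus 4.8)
The plan is to derive the statement from \autoref{Thm-A} (equivalently \autoref{thm:dimrokc-main}) by taking the normal subgroup to be trivial, $N=\{e\}\subseteq G$, and then checking that every hypothesis of \autoref{Thm-A} specializes to something either assumed here or automatic. Since $G/N$ is (canonically isomorphic to) $G$, the structural condition on $G/N$ in \autoref{Thm-A}---that it contain a discrete, normal, cocompact, residually finite subgroup whose box space has finite asymptotic dimension---is exactly the condition imposed on $G$ in the present statement.

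Next I would unravel the dynamical hypotheses. The restrictions $\alpha\vslash_N$ and $\gamma\vslash_N$ are actions of the one-element group, i.e.\ nothing more than the \cstar-algebras $A$ and $\CD$ themselves; hence ``$\alpha\vslash_N$ is $\gamma\vslash_N$-absorbing'' says precisely that $A\cong A\otimes\CD$, which is assumed. In the same way, the Rokhlin dimension of $\alpha$ with commuting towers relative to the trivial subgroup is, by definition, the ordinary Rokhlin dimension of $\alpha$ with commuting towers, which is assumed to be finite. The only condition of \autoref{Thm-A} left to account for is unitary regularity of $\gamma$: here I would use that a strongly self-absorbing \cstar-algebra is $\CZ$-stable, so $\CD\cong\CD\otimes\CZ$, together with the fact from the theory of semi-strongly self-absorbing actions that on a $\CZ$-stable strongly self-absorbing \cstar-algebra every semi-strongly self-absorbing action is automatically unitarily regular. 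Thus all hypotheses of \autoref{Thm-A} hold, and it yields that $\alpha$ is $\gamma$-absorbing.

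As $\gamma$ ranged over all semi-strongly self-absorbing actions $G\curvearrowright\CD$, this is the desired conclusion. I do not expect a serious obstacle here, since the argument is a direct specialization of \autoref{Thm-A}; the only step with genuine (but already-available) content is the automatic unitary regularity of $\gamma$, and that is where the reader is pointed to the earlier literature rather than to any new argument.
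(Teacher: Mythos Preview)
Your overall strategy---specialize \autoref{Thm-A} to $N=\{e\}$---is exactly what the paper does (see the proof of \autoref{cor:dimrokc-cor}). The only place where your argument is imprecise is the unitary regularity step. You invoke as a ``fact'' that on a $\CZ$-stable strongly self-absorbing \cstar-algebra every semi-strongly self-absorbing action is unitarily regular, but the statement actually available (see \autoref{rem:unitary-reg}) is that \emph{equivariant} $\CZ$-stability of the action implies unitary regularity. The mere isomorphism $\CD\cong\CD\otimes\CZ$ does not by itself give $\gamma\cc\gamma\otimes\id_\CZ$, so there is a small gap here.

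The paper closes this gap with a one-line trick: replace $\gamma$ by $\gamma\otimes\id_\CZ$, which is again semi-strongly self-absorbing, is equivariantly $\CZ$-stable by construction (hence unitarily regular), and has the property that any $(\gamma\otimes\id_\CZ)$-absorbing action is automatically $\gamma$-absorbing. With this substitution your argument goes through verbatim, and yields absorption with respect to very strong cocycle conjugacy via \autoref{thm:equi-McDuff}\ref{equi-McDuff5}.
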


Here it may be useful to keep in mind that any version of what is called the Rokhlin property for $G=\IR$ or $G=\IZ^k$ will automatically imply finite Rokhlin dimension with commuting towers, and is therefore covered by \autoref{Cor-B}.
This is in turn a generalization of \cite[Theorem 1.1]{HirshbergWinter07}, \cite[Theorems 5.8, 5.9]{HirshbergWinterZacharias15}, \cite[Theorem 9.6]{SzaboWuZacharias17}, \cite[Theorem 5.3]{HirshbergSzaboWinterWu17} and \cite[Theorem 4.50(2)]{GardellaLupini16}.
We will in fact only apply the corollary within this paper, with a particular focus on the special case where the action is assumed to have the Rokhlin property.
Some immediate applications of \autoref{Cor-B} will be discussed in Section \ref{sec:applications}.
The main non-trivial application is pursued in Section \ref{sec:multiflows}, which is as follows; see \autoref{thm:multiflows} and \autoref{cor:multiflow-spi}:

\begin{theoremi} \label{Thm-C}
Let $\CD$ be a strongly self-absorbing Kirchberg algebra.
Then up to (very strong) cocycle conjugacy, there is a unique action $\gamma: \IR^k\curvearrowright\CD$ that has finite Rokhlin dimension with commuting towers.
\end{theoremi}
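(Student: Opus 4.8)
The plan is to construct a model action $\gamma\colon\IR^k\curvearrowright\CD$, to absorb $\gamma$ into every $\IR^k$-action of the relevant kind by means of \autoref{Cor-B}, to absorb every such action into $\gamma$ by a separate argument, and then to collapse everything onto $\gamma$ using the flip automorphism of $\CD\otimes\CD$. To build the model: being a strongly self-absorbing Kirchberg algebra, $\CD$ absorbs $\CO_\infty$, so tensoring a Rokhlin flow on $\CO_\infty$ with the trivial action on $\CD$ produces an $\IR$-action $\mu_0\colon\IR\curvearrowright\CD$ with the Rokhlin property. Set $\mu:=\mu_0\otimes\dots\otimes\mu_0\colon\IR^k\curvearrowright\CD^{\otimes k}\cong\CD$, the $j$-th copy of $\IR$ acting on the $j$-th tensor factor; since the Rokhlin embeddings coming from the $k$ legs lie in mutually commuting subalgebras of the equivariant central sequence algebra, $\mu$ has the Rokhlin property for $\IR^k$, so $\dimrokc(\mu)<\infty$. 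Finally put $\gamma:=\mu^{\otimes\infty}\colon\IR^k\curvearrowright\CD^{\otimes\infty}\cong\CD$; a routine half-shift argument of McDuff type shows that $\gamma$ is semi-strongly self-absorbing, with $\gamma\vscc\gamma\otimes\gamma$, and $\gamma$ still has the Rokhlin property (in particular $\dimrokc(\gamma)<\infty$) and is unitarily regular, so $\gamma$ is already an action of the kind claimed to exist.

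For the first absorption, note that $G=\IR^k$ satisfies the hypotheses of \autoref{Cor-B}: its cocompact lattice $\IZ^k$ is discrete, normal and residually finite, and every box space of $\IZ^k$ has asymptotic dimension $k<\infty$; moreover $\CD\cong\CD\otimes\CD$ since $\CD$ is strongly self-absorbing. Hence \autoref{Cor-B} gives, for every action $\alpha\colon\IR^k\curvearrowright\CD$ with $\dimrokc(\alpha)<\infty$, that $\alpha$ absorbs $\gamma$, i.e.\ $\alpha\vscc\alpha\otimes\gamma$.

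The second absorption -- that $\gamma$ absorbs $\alpha$ as well, i.e.\ $\gamma\vscc\gamma\otimes\alpha$ -- is the technical heart of the argument, and it does \emph{not} follow from \autoref{Cor-B}, which only yields absorption in the opposite direction. It would follow from \autoref{Cor-B} applied to $\gamma$ itself (which also has $\dimrokc(\gamma)<\infty$) once one knows that $\alpha$ is semi-strongly self-absorbing, so the task reduces to establishing the latter; equivalently, one must exhibit a unital equivariant $*$-homomorphism from $(\CD,\alpha)$ into the equivariant central sequence algebra of $(\CD,\alpha)$. Using $\alpha\vscc\alpha\otimes\gamma$ one already has a unital equivariant copy of $(\CD,\gamma)$, and hence of $C(\IT^k)$ with $\IR^k$ acting by (scaled) translation, inside that central sequence algebra; the work is to promote this to a copy of $(\CD,\alpha)$. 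I expect this promotion to be the main obstacle, and it is precisely here that the standing hypotheses are used: because $\CD$ is a Kirchberg algebra and $\IR^k$ is connected, there is no equivariant $KK$-theoretic obstruction to such a promotion (as can genuinely occur for discrete groups), so that the classification theory of Kirchberg algebras \cite{KirchbergPhillips00}, combined with the equivariant machinery of \cite{Szabo17ssa3}, should supply the embedding.

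Granting both absorptions, the proof concludes quickly. For any $\alpha$ with $\dimrokc(\alpha)<\infty$ one has $\alpha\vscc\alpha\otimes\gamma\cong\gamma\otimes\alpha\vscc\gamma$, where the middle isomorphism is implemented by the flip of $\CD\otimes\CD$ and the outer very strong cocycle conjugacies are the two absorption statements; hence $\alpha\vscc\gamma$. Consequently any two $\IR^k$-actions on $\CD$ with finite Rokhlin dimension with commuting towers are very strongly cocycle conjugate to $\gamma$, and so to each other, while $\gamma$ from the first step witnesses existence. This establishes the asserted uniqueness, and existence, up to very strong cocycle conjugacy.
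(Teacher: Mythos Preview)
Your overall strategy --- build a model $\gamma$, establish mutual absorption, conclude via the flip --- matches the paper's, but there is a genuine gap at the step where you declare $\gamma=\mu^{\otimes\infty}$ to be semi-strongly self-absorbing by a ``routine half-shift argument of McDuff type''. The tensor shift only produces an equivariant isomorphism $\gamma\cong\gamma\otimes\gamma$; strong self-absorption requires that the first-factor embedding $\id\otimes\eins$ be approximately $\IR^k$-unitarily equivalent to such an isomorphism, which needs $\mu$ (equivalently $\gamma$) to have approximately $\IR^k$-inner flip. This is not automatic: even though $\CD$ has approximately inner flip non-equivariantly, and even if each generating flow $\mu_0$ has approximately $\IR$-inner flip, the implementing unitaries for one coordinate have no reason to be approximately invariant under the remaining $k-1$ coordinates.

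The paper's resolution is precisely to prove, by induction on $k$, that \emph{every} Rokhlin $\IR^k$-action $\alpha$ on $\CD$ has approximately $\IR^k$-inner flip. The induction hypothesis makes the restriction $\alpha^{(\msout{k})}$ to $\IR^{k-1}$ semi-strongly self-absorbing, which supplies approximately $\IR^{k-1}$-invariant unitaries whose commutator implements the flip; a Kishimoto-style relative cohomology-vanishing argument (\autoref{lem:hilfslemma}, using the Rokhlin property in the remaining $\IR$-direction together with the path-control lemma \autoref{lem:ssa-ac-paths}) then upgrades these to approximately $\IR^k$-invariant unitaries. Once this is established, $\alpha^{\otimes\infty}$ is strongly self-absorbing and $\alpha\scc\alpha^{\otimes\infty}$ via \autoref{cor:dimrokc-cor}, so $\alpha$ itself is semi-strongly self-absorbing --- which simultaneously closes both your first gap (for the model $\gamma$) and your acknowledged second gap (for a general $\alpha$). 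Your proposed detour through equivariant $KK$-theory and Kirchberg--Phillips classification would have to confront exactly the same issue, and there is no off-the-shelf equivariant classification for $\IR^k$-actions on Kirchberg algebras that supplies the needed embedding.
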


We note that a strongly self-absorbing \cstar-algebra is a Kirchberg algebra precisely when it is traceless.
Kirchberg algebras are (by convention) the separable, simple, nuclear, purely infinite \cstar-algebras, whose celebrated classification is due to Kirchberg--Phillips \cite{KirchbergPhillips00, Phillips00, KirchbergC} and which constitutes a prominent special case of the Elliott classification program.
We note that all other strongly self-absorbing \cstar-algebras are conjectured to be quasidiagonal --- see \cite[Corollary 6.7]{TikuisisWhiteWinter17} --- and so any Rokhlin flows on them would induce Rokhlin flows on the universal UHF algebra, which does not exist; see \cite[page 600]{Kishimoto96_R}, \cite[page 289]{Kishimoto98} or \cite[Section 2]{HirshbergSzaboWinterWu17}.
In particular, the underlying problem above is only interesting to consider in the purely infinite case.

Although the theorem above is not too far off from being a very special case of \cite{Szabo17R} for ordinary flows, this result is entirely new for $k\geq 2$, and is in fact the first classification result for $\IR^k$-actions on \cstar-algebras up to cocycle conjugacy.

The proof goes via induction in the number $k$ of flows generating the action.
In order to achieve a major part of the induction step, the corollary above is used in order to see that any two $\IR^k$-actions as in the statement absorb each other tensorially.
However, in order for this to make sense, it has to be at least established beforehand (as part of the induction step) that any such action has equivariantly approximately inner flip.
This is achieved via a relative Kishimoto-type approximate cohomology-vanishing argument inspired by \cite[Section 3]{Kishimoto02}, which combines arguments related to the Rokhlin property for $\IR^k$-actions with arguments related to the structure theory of semi-strongly self-absorbing actions.

At this moment it seems unclear whether or not to expect a similarly rigid situation for Rokhlin $\IR^k$-actions on general Kirchberg algebras, as is the case for $k=1$ \cite{Szabo17R}.
In general, in order to implement a more general classification of this sort, it would require a technique for both constructing and manipulating cocycles for $\IR^k$-actions (where $k\geq 2$) with the help of the Rokhlin property, which may potentially be much more complicated than for $k=1$.
In essence, our approach based on ideas related to strong self-absorption works because the main result allows one to bypass the need to bother with general cocycles for all of $\IR^k$, but instead requires one only to consider individual copies of $\IR$ inside $\IR^k$ at a time (represented by the flows generating the $\IR^k$-action), enabling an induction process.

In forthcoming work, the full force of the aforementioned main result of this paper (\autoref{thm:dimrokc-main}) will form the basis of further uniqueness results regarding actions of certain discrete amenable groups on strongly self-absorbing \cstar-algebras.
\bigskip

\textbf{Acknowledgements.} The author would like to express his gratitude to the referee for very useful and detailed comments.


\section{Preliminaries}

\begin{nota}
Unless specified otherwise, we will stick to the following notational conventions in this paper:
\begin{itemize}
\item $G$ denotes a loally compact Hausdorff group.
\item $A$ and $B$ denote \cstar-algebras.
\item The symbols $\alpha, \beta, \gamma$ are used to denote point-norm continuous actions on \cstar-algebras.
Since continuity is always assumed in this context, we will simply refer to them as actions. 
\item If $\alpha: G\curvearrowright A$ is an action, then $A^\alpha$ denotes the fixed-point algebra of $A$.
\item If $F$ is a finite subset inside some set $M$, we often denote $F\fin M$.
\item If $(X,d)$ is some metric space with elements $a,b\in X$, then we write $a=_\eps b$ as a shorthand for $d(a,b)\leq\eps$.
\end{itemize}
\end{nota}

We first recall some needed definitions and notation.

\begin{defi}[cf.~{\cite[Definition 3.2]{PackerRaeburn89} and \cite[Section 1]{Szabo18ssa, Szabo17ssa3}}]
Let $\alpha: G\curvearrowright A$ be an action. Consider a strictly continuous map $w: G\to\CU(\CM(A))$.
\begin{enumerate}[label=(\roman*),leftmargin=*] 
\item $w$ is called an $\alpha$-1-cocycle, if one has $w_g\alpha_g(w_h)=w_{gh}$ for all $g,h\in G$.
In this case, the map $\alpha^w: G\to\Aut(A)$ given by $\alpha_g^w=\ad(w_g)\circ\alpha_g$ is again an action, and is called a cocycle perturbation of $\alpha$. 
Two $G$-actions on $A$ are called exterior equivalent if one of them is a cocycle perturbation of the other.
\item Assume that $w$ is an $\alpha$-1-cocycle. 
It is called an approximate coboundary, if there exists a sequence of unitaries $x_n\in\CU(\CM(A))$ such that $x_n\alpha_g(x_n^*) \strict w_g$ for all $g\in G$ and uniformly on compact sets. 
Two $G$-actions on $A$ are called strongly exterior equivalent, if one of them is a cocycle perturbation of the other via an approximate coboundary.
\item Assume $w$ is an $\alpha$-1-cocycle.
It is called an asymptotic coboundary, if there exists a strictly continuous map $x: [0,\infty)\to\CU(\CM(A))$ with $x_0=\eins$ and such that $x_t\alpha_g(x_t^*) \strict w_g$ for all $g\in G$ and uniformly on compact sets. 
Two $G$-actions on $A$ are called very strongly exterior equivalent, if one of them is a cocycle perturbation of the other via an asymptotic coboundary.
\item Let $\beta: G\curvearrowright B$ be another action. 
The actions $\alpha$ and $\beta$ are called cocycle conjugate, written $\alpha\cc\beta$, if there exists an isomorphism $\psi: A\to B$ such that $\psi^{-1}\circ\beta\circ\psi$ and $\alpha$ are exterior equivalent. 
If $\psi$ can be chosen such that $\psi^{-1}\circ\beta\circ\psi$ and $\alpha$ are strongly exterior equivalent, then $\alpha$ and $\beta$ are called strongly cocycle conjugate, written $\alpha\scc\beta$.
If $\psi$ can be chosen such that $\psi^{-1}\circ\beta\circ\psi$ and $\alpha$ are very strongly exterior equivalent, then $\alpha$ and $\beta$ are called very strongly cocycle conjugate, written $\alpha\vscc\beta$.
\end{enumerate}
\end{defi}

Note that for a cocycle $w$, the cocycle identity applied to $g=h=e$ yields $w_e=w_e^2$, and hence $w_e=\eins$.
This is implicitly used in many calculations without further mention.

\begin{defi}[cf.~{\cite[Definition 1.1]{Kirchberg04} and \cite[Section 1]{Szabo18ssa}}] 
Let $A$ be a \cstar-algebra and let $\alpha: G\curvearrowright A$ be an action of a locally compact group. 
\begin{enumerate}[label={\textup{(\roman*)}},leftmargin=*]
\item The sequence algebra of $A$ is given as 
\[
A_\infty = \ell^\infty(\IN,A)/\set{ (x_n)_n ~|~ \lim_{n\to\infty}\| x_n\|=0}.
\]
There is a standard embedding of $A$ into $A_\infty$ by sending an element to its constant sequence. 
We shall always identify $A\subset A_\infty$ this way, unless specified otherwise.
\item Pointwise application of $\alpha$ on representing sequences defines a (not necessarily continuous) $G$-action $\alpha_\infty$ on $A_\infty$. 
Let
\[
A_{\infty,\alpha} = \set{ x\in A_\infty \mid [g\mapsto\alpha_{\infty,g}(x)]~\text{is continuous} }
\]
be the continuous part of $A_\infty$ with respect to $\alpha$.
\item For some \cstar-subalgebra $B\subset A_\infty$, the (corrected) relative central sequence algebra is defined as
\[
F(B,A_\infty) = (A_\infty\cap B')/\ann(B,A_\infty).
\]
\item If $B\subset A_\infty$ is $\alpha_\infty$-invariant, then the $G$-action $\alpha_\infty$ on $A_\infty$ induces a (not necessarily continuous) $G$-action $\tilde{\alpha}_\infty$ on $F(B,A_\infty)$. 
Let
\[
F_\alpha(B,A_\infty) = \set{ y\in F_\alpha(B,A_\infty) \mid [g\mapsto\tilde{\alpha}_{\infty,g}(y)]~\text{is continuous} }
\]
be the continuous part of $F(B,A_\infty)$ with respect to $\alpha$.
\item In case $B=A$, we write $F(A,A_\infty)=F_\infty(A)$ and $F_\alpha(A,A_\infty)=F_{\infty,\alpha}(A)$.
\end{enumerate}
\end{defi}

\begin{defi}[\see{BarlakSzabo16ss}{Definition 3.3}]
Let $G$ be a second-countable, locally compact group, and let $\alpha: G\curvearrowright A$ and $\beta: G\curvearrowright B$ be actions on separable \cstar-algebras.
An equivariant $*$-homomorphism $\phi: (A,\alpha)\to (B,\beta)$ is called (equivariantly) sequentially split, if there exists a $*$-homomorphism $\psi: (B,\beta)\to (A_{\infty,\alpha},\alpha_\infty)$ such that $\psi(\phi(a))=a$ for all $a\in A$.
\end{defi}

\begin{defi}
Let $G$ be a second-countable, locally compact group, and let $\alpha: G\curvearrowright A$ and $\beta: G\curvearrowright B$ be actions on unital \cstar-algebras.
Let $\phi_1, \phi_2: (A,\alpha)\to (B,\beta)$ be two unital and equivariant $*$-homomorphisms.
We say that $\phi_1$ and $\phi_2$ are approximately $G$-unitarily equivalent, if the following holds.
For every $\CF\fin A$, $\eps\greater 0$, and compact set $K\subseteq G$, there exists a unitary $v\in\CU(B)$ such that
\[
\max_{a\in\CF}~\norm \phi_2(a)-v\phi_1(a)v^* \norm \leq\eps,\quad \max_{g\in K}~\norm v-\beta_g(v) \norm \leq \eps.
\]
\end{defi}

\begin{defi}[\see{Szabo18ssa}{Definitions 3.1, 4.1}]
Let $\CD$ be a separable, unital \cstar-algebra and $G$ a second-countable, locally compact group. Let $\gamma: G\curvearrowright\CD$ be an action. 
We say that
\begin{enumerate}[label={(\roman*)},leftmargin=*]
\item $\gamma$ is a strongly self-absorbing action, if the equivariant first-factor embedding
\[
\id_\CD\otimes\eins_\CD: (\CD,\gamma)\to (\CD\otimes\CD,\gamma\otimes\gamma)
\]
is approximately $G$-unitarily equivalent to an isomorphism.
\item $\gamma$ is semi-strongly self-absorbing, if it is strongly cocycle conjugate to a strongly self-absorbing action.
\end{enumerate}
\end{defi}

\begin{defi}[see {\cite[Definition 2.17]{Szabo18ssa2}}]
Let $G$ be a second-countable, locally compact group.
An action $\alpha: G\curvearrowright A$ on a unital \cstar-algebra is called unitarily regular, if for every $\eps\greater 0$ and compact set $K\subseteq G$, there exists $\delta\greater 0$ such that for every pair of unitaries
\[
u,v\in\CU(A) \quad\text{with}\quad \max_{g\in K}~ \max\set{ \norm \alpha_g(u)-u \norm, \norm \alpha_g(v)-v \norm} \leq\delta,
\]
there exists a continuous path of unitaries $w: [0,1]\to\CU(A)$ satisfying
\[
w(0)=\eins,\quad w(1)=uvu^*v^*,\quad \max_{0\leq t\leq 1}~\max_{g\in K}~ \norm \alpha_g(w(t))-w(t) \norm \leq\eps. 
\]
\end{defi}

Let us recall some of the main results from \cite{Szabo18ssa, Szabo18ssa2, Szabo17ssa3}, which we will use throughout.
We will also use the perspective given in \cite[Section 4]{BarlakSzabo16ss}.

\begin{theorem}[cf.~{\cite[Theorems 3.7, 4.7]{Szabo18ssa}}]
\label{thm:equi-McDuff}
Let $G$ be a second-countable, locally compact group. Let $A$ be a separable \cstar-algebra and $\alpha: G\curvearrowright A$ an action. Let $\CD$ be a separable, unital \cstar-algebra and $\gamma: G\curvearrowright\CD$ a semi-strongly self-absorbing action. 
The following are equivalent:
\begin{enumerate}[label=\textup{(\roman*)},leftmargin=*] 
\item $\alpha$ and $\alpha\otimes\gamma$ are strongly cocycle conjugate. \label{equi-McDuff1}
\item $\alpha$ and $\alpha\otimes\gamma$ are cocycle conjugate.  \label{equi-McDuff2}
\item There exists a unital, equivariant $*$-homomorphism from $(\CD,\gamma)$ to $\big( F_{\infty,\alpha}(A), \tilde{\alpha}_\infty \big)$. \label{equi-McDuff3}
\item The equivariant first-factor embedding $\id_A\otimes\eins: (A,\alpha)\to (A\otimes\CD,\alpha\otimes\gamma)$ is sequentially split. \label{equi-McDuff4}
\end{enumerate}
If $\gamma$ is moreover unitarily regular, then these statements are equivalent to
\begin{enumerate}[label=\textup{(\roman*)},leftmargin=*,resume]
\item $\alpha$ and $\alpha\otimes\gamma$ are very strongly cocycle conjugate. \label{equi-McDuff5}
\end{enumerate}
\end{theorem}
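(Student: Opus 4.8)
The plan is to establish the cycle $\ref{equi-McDuff1}\Ra\ref{equi-McDuff2}\Ra\ref{equi-McDuff3}\Ra\ref{equi-McDuff4}\Ra\ref{equi-McDuff1}$ and to treat the equivalence with \ref{equi-McDuff5} separately at the very end, where the unitary regularity of $\gamma$ is used. Throughout one relies on the structure theory from \cite{Szabo18ssa, Szabo18ssa2}: the underlying algebra $\CD$ of a semi-strongly self-absorbing action is a strongly self-absorbing \cstar-algebra, hence nuclear; one has $\gamma\otimes\gamma\scc\gamma$ and, more generally, $\gamma^{\otimes\infty}\scc\gamma$ for the diagonal action $\gamma^{\otimes\infty}$ on $\CD^{\otimes\infty}=\varinjlim\CD^{\otimes n}$; and $\gamma$ together with all its finite tensor powers has approximately $G$-unitarily equivalent half-flip.

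The implication $\ref{equi-McDuff1}\Ra\ref{equi-McDuff2}$ is trivial. For $\ref{equi-McDuff2}\Ra\ref{equi-McDuff3}$ I would combine $\alpha\cc\alpha\otimes\gamma$ with $\gamma^{\otimes\infty}\scc\gamma$ to get $\alpha\cc\alpha\otimes\gamma^{\otimes\infty}$. Inside $A\otimes\CD^{\otimes\infty}$ the embedding of the $m$-th tensor factor $(\CD,\gamma)\hookrightarrow(A\otimes\CD^{\otimes\infty},\alpha\otimes\gamma^{\otimes\infty})$ has range commuting with $A\otimes\CD^{\otimes\{1,\dots,m-1\}}$; a reindexing argument along a sufficiently fast sequence $m_n\to\infty$ then produces a unital equivariant $*$-homomorphism into $F_{\infty,\alpha}(A\otimes\CD^{\otimes\infty})$ taking values in the continuous part, and transporting it along the cocycle conjugacy $A\cong A\otimes\CD^{\otimes\infty}$ gives the desired unital equivariant $\phi\colon(\CD,\gamma)\to(F_{\infty,\alpha}(A),\tilde{\alpha}_\infty)$. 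The equivalence $\ref{equi-McDuff3}\LRa\ref{equi-McDuff4}$ is then a reformulation within the general theory of (equivariant) sequentially split homomorphisms \cite{BarlakSzabo16ss}: given $\phi$, a standard lifting argument --- first to a c.p.c.\ map, then, using nuclearity of $\CD$ and an $\eps$-test/reindexing, to an honest equivariant $*$-homomorphism $\bar\phi\colon(\CD,\gamma)\to(A_{\infty,\alpha},\alpha_\infty)$ whose range commutes with $A$ modulo $\ann(A,A_\infty)$ --- lets one define the splitting $a\otimes d\mapsto a\,\bar\phi(d)$ of the first-factor embedding, while conversely restricting any splitting to $\eins_A\otimes\CD$ recovers a map as in \ref{equi-McDuff3}.

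The substantial implication is $\ref{equi-McDuff4}\Ra\ref{equi-McDuff1}$. Here the plan is to upgrade the sequential splitting of $\iota=\id_A\otimes\eins\colon(A,\alpha)\to(A\otimes\CD,\alpha\otimes\gamma)$ to an approximate invertibility, using the approximately $G$-unitarily equivalent half-flip of $\gamma$: if $\sigma\colon(A\otimes\CD,\alpha\otimes\gamma)\to(A_{\infty,\alpha},\alpha_\infty)$ is a splitting, then on finite subsets of $A\otimes\CD$ and compact subsets of $G$ the composite $\iota\circ\sigma$ is approximately $G$-unitarily equivalent to $\id_{A\otimes\CD}$. Feeding this into an equivariant one-sided approximate-intertwining argument in the spirit of Elliott, as developed equivariantly in \cite{BarlakSzabo16ss, Szabo18ssa} --- inductively choosing finite sets, tolerances, and compact subsets of $G$, together with unitaries in $A\otimes\CD$ that are almost fixed by $\alpha\otimes\gamma$ on the relevant compact sets --- produces an isomorphism $A\cong A\otimes\CD$ intertwining $\alpha$ with a cocycle perturbation of $\alpha\otimes\gamma$, the perturbing cocycle being an approximate coboundary precisely because the intertwining unitaries were arranged to be almost invariant. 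This is the assertion $\alpha\scc\alpha\otimes\gamma$.

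Finally, $\ref{equi-McDuff5}\Ra\ref{equi-McDuff1}$ is trivial, and for $\ref{equi-McDuff1}\Ra\ref{equi-McDuff5}$ under the hypothesis that $\gamma$ is unitarily regular I would revisit the intertwining above and show that the approximate coboundary it produces can be promoted to an asymptotic coboundary. This is exactly the purpose for which unitary regularity was introduced in \cite{Szabo18ssa2}: it lets one connect the almost-invariant intertwining unitaries to $\eins$ through continuous paths of unitaries that remain almost invariant on the relevant compact sets, so that the telescoping product of these paths assembles into a single strictly continuous path $x\colon[0,\infty)\to\CU(\CM(A))$ with $x_0=\eins$ implementing the cocycle. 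I expect the main obstacle to be the implication $\ref{equi-McDuff4}\Ra\ref{equi-McDuff1}$, and within it the bookkeeping required to keep \emph{all} the intertwining unitaries simultaneously close to the fixed points of the actions --- this is precisely what separates cocycle conjugacy from strong, and then (via unitary regularity) from very strong, cocycle conjugacy --- whereas the purely algebraic content of the intertwining is routine once the sequence-algebra reformulations are in place.
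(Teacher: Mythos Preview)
The paper does not prove this theorem; it is stated as a recalled result from \cite{Szabo18ssa, Szabo18ssa2} (see the sentence preceding the theorem: ``Let us recall some of the main results from \cite{Szabo18ssa, Szabo18ssa2, Szabo17ssa3}, which we will use throughout''), so there is no proof in the paper to compare against. Your outline is a faithful sketch of the arguments in those references: the cycle $\ref{equi-McDuff1}\Rightarrow\ref{equi-McDuff2}\Rightarrow\ref{equi-McDuff3}\Leftrightarrow\ref{equi-McDuff4}\Rightarrow\ref{equi-McDuff1}$ with the substantial step being an equivariant one-sided intertwining using the approximately $G$-inner half-flip, and the upgrade to \ref{equi-McDuff5} via unitary regularity, is exactly the structure of \cite[Theorems 3.7, 4.7]{Szabo18ssa} together with \cite[Theorem 3.15]{Szabo18ssa2}.
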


\begin{reme}
For the rest of this paper, an action $\alpha$ satisfying condition \ref{equi-McDuff1} from above is called $\gamma$-absorbing or $\gamma$-stable. In the particular case that $\gamma$ is the trivial $G$-action on a strongly self-absorbing \cstar-algebra $\CD$, we will say that $\alpha$ is equivariantly $\CD$-stable.
\end{reme}

\begin{rem} \label{rem:unitary-reg}
Unitary regularity for an action is a fairly mild technical assumption.
It can be seen as the equivariant analog of the \cstar-algebraic property that the commutator subgroup inside the unitary group lies in the connected component of the unit.
Unitary regularity holds automatically under equivariant $\CZ$-stability, but also in other cases; see \cite[Proposition 2.19 and Example 6.4]{Szabo18ssa2}.
\end{rem}

\begin{theorem}[see {\cite[Theorem 5.9]{Szabo18ssa2}}] \label{thm:ssa-ext}
A semi-strongly self-absorbing action $\gamma: G\curvearrowright\CD$ is unitarily regular if and only if the class of all separable $\gamma$-absorbing $G$-\cstar-dynamical systems is closed under equivariant extensions.
\end{theorem}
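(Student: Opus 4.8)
The plan is to prove both implications by reducing $\gamma$-absorption to one of the equivalent conditions in \autoref{thm:equi-McDuff}; recall from there that a separable system $(A,\alpha)$ is $\gamma$-absorbing exactly when there is a unital equivariant $*$-homomorphism $(\CD,\gamma)\to(F_{\infty,\alpha}(A),\tilde\alpha_\infty)$, and equivalently when the equivariant first-factor embedding $\id_A\otimes\eins\colon(A,\alpha)\to(A\otimes\CD,\alpha\otimes\gamma)$ is sequentially split. Since $\gamma$ is semi-strongly self-absorbing, the underlying \cstar-algebra $\CD$ has approximately inner half-flip and satisfies $\CD\cong\CD\otimes\CD$; this is the algebraic input that will let one merge two approximately central copies of $\CD$ into one, compensating for the fact that $\CD$ is in general not semiprojective, so that $*$-homomorphisms out of $\CD$ cannot be lifted naively.

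For the implication ``unitarily regular $\Rightarrow$ closed under equivariant extensions'', one starts from an equivariant extension $0\to(I,\alpha|_I)\to(A,\alpha)\to(A/I,\bar\alpha)\to 0$ of separable systems with $I$ and $A/I$ both $\gamma$-absorbing, and wants a unital equivariant $*$-homomorphism $(\CD,\gamma)\to(F_{\infty,\alpha}(A),\tilde\alpha_\infty)$. The first step is to observe that $A_\infty/I_\infty$ is canonically equivariantly isomorphic to $(A/I)_\infty$, so that Kirchberg's theory of $\sigma$-ideals, carried out equivariantly, yields a short exact sequence
\[
0\longrightarrow J\longrightarrow F_{\infty,\alpha}(A)\stackrel{\pi}{\longrightarrow}F_{\infty,\bar\alpha}(A/I)\longrightarrow 0
\]
compatible with the induced (possibly discontinuous) $G$-actions, in which the ideal $J$ contains a unital equivariant copy of $F_{\infty,\alpha|_I}(I)$. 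By hypothesis one has unital equivariant $*$-homomorphisms from $(\CD,\gamma)$ into $F_{\infty,\alpha|_I}(I)$ and into $F_{\infty,\bar\alpha}(A/I)$. Next one lifts the latter along $\pi$: using nuclearity of $\CD$ and surjectivity of $\pi$, it lifts to a completely positive contraction into $F_{\infty,\alpha}(A)$ which is approximately multiplicative, approximately unital and approximately equivariant, with all defects lying in $J$. The final step repairs these defects with the copy of $\CD$ coming from the ideal: invoking $\CD\cong\CD\otimes\CD$ with approximately inner half-flip, the two relevant approximately central copies of $\CD$ are rotated into coincidence by a unitary in a matrix amplification of $J$, after which a reindexing/$\eps$-test argument upgrades ``approximate'' to ``exact'' inside the sequence algebra and produces the required homomorphism. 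The delicate point is that the rotating unitary is only \emph{approximately} fixed by the induced action and need not, a priori, be connectible to the unit through approximately fixed unitaries; connecting it is exactly what is needed to carry out the correction inside $F_{\infty,\alpha}(A)$ itself rather than in a matrix amplification, and unitary regularity of $\gamma$ is invoked precisely to supply such a path.

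For the reverse implication one tests the extension hypothesis against a universal extension built from $(\CD,\gamma)$, the natural candidate being the evaluation extension $0\to C_0((0,1),\CD)\to C_0((0,1],\CD)\to\CD\to 0$ with all three algebras carrying the $G$-actions induced pointwise by $\gamma$. The quotient $(\CD,\gamma)$ is $\gamma$-absorbing because $\gamma$ is semi-strongly self-absorbing, and the ideal is $\gamma$-absorbing because $\gamma$-absorption is preserved under tensoring by $C_0((0,1))$; so the hypothesis forces the cone $C_0((0,1],\CD)$, with its pointwise $\gamma$-action, to be $\gamma$-absorbing. It then remains to unwind this conclusion: using that the unitaries in the unitization of $C_0((0,1],\CD)$ are, up to a scalar, precisely the continuous paths of unitaries in $\widetilde{\CD}$ starting at the unit, one reads off from the sequentially-split characterization of \autoref{thm:equi-McDuff} that, for any pair of approximately fixed unitaries $u,v\in\CU(\CD)$, there is a homotopy through approximately fixed unitaries from $uvu^*v^*$ to the unit; a reindexing argument over the separable algebra $\CD$ makes the estimate uniform and yields exactly the quantitative statement defining unitary regularity of $\gamma$.

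The step I expect to be the main obstacle is the gluing in the first implication, that is, turning the approximately multiplicative, approximately equivariant completely positive lift into an honest unital equivariant $*$-homomorphism. There the absence of semiprojectivity of $\CD$ must be absorbed simultaneously by the self-absorption $\CD\cong\CD\otimes\CD$ and by unitary regularity of $\gamma$, and arranging the order and bookkeeping of the successive reindexing arguments --- so that the various approximations collapse to exact statements in the sequence algebra without destroying equivariance --- is the technically delicate heart of the argument.
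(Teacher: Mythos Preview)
The paper does not prove this statement; it is quoted verbatim from \cite[Theorem 5.9]{Szabo18ssa2} and used as a black box throughout (for instance in \autoref{lem:multi-cones-absorption} and in the proof of \autoref{thm:dimrokc-main}). So there is no proof in the paper to compare your proposal against.

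That said, your sketch for the forward implication is in the right spirit and roughly matches the approach in \cite{Szabo18ssa2}: one does use Kirchberg's $\sigma$-ideal machinery equivariantly to obtain an extension of central sequence algebras, lift along it, and then repair the lift using the self-absorption $\CD\cong\CD\otimes\CD$, with unitary regularity entering exactly to control the homotopies of approximately fixed unitaries that arise in the gluing.

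Your reverse implication, however, has a gap. The cone extension $0\to C_0((0,1),\CD)\to C_0((0,1],\CD)\to\CD\to 0$ is not the right test object: even granting that its middle term becomes $\gamma$-absorbing, there is no evident way to ``read off'' from this that arbitrary commutators $uvu^*v^*$ of approximately fixed unitaries in $\CD$ are homotopic to $\eins$ through approximately fixed unitaries; your last paragraph asserts this but does not indicate a mechanism, and I do not see one. The test extension that actually works (and that is used in \cite{Szabo18ssa2}, consistent with \autoref{rem:multi-cones-join} and \autoref{rem:multi-cones-extension} in the present paper) is
\[
0 \longrightarrow C_0(0,1)\otimes\CD\otimes\CD \longrightarrow \CD\star\CD \longrightarrow \CD\oplus\CD \longrightarrow 0,
\]
with the $G$-action induced by $\gamma$ on each factor. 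Here the ideal and quotient are $\gamma$-absorbing for any semi-strongly self-absorbing $\gamma$, so the extension hypothesis forces $\gamma\star\gamma$ on $\CD\star\CD$ to be $\gamma$-absorbing; it is precisely this statement that is shown in \cite{Szabo18ssa2} to be equivalent to unitary regularity. The join, not the cone, encodes the relevant commutator homotopies.
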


We will extensively use the following without much mention:

\begin{prop}[see {\cite{Brown00}}]
\label{prop:cont-ell}
Let $G$ be a second-countable, locally compact group. Let $A$ be a \cstar-algebra and $\alpha: G\curvearrowright A$ an action. Let $x\in A_{\infty,\alpha}$ and $(x_n)_n\in\ell^\infty(\IN,A)$ a bounded sequence representing $x$. Then $(x_n)_n$ is a continuous element with respect to the componentwise action of $\alpha$ on $\ell^\infty(\IN,A)$.
\end{prop}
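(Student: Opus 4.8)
The plan is to derive the statement from a Baire category argument applied to the continuous ``quasi-length'' functions $f_n\colon G\to[0,\infty)$ given by $f_n(g)=\norm\alpha_g(x_n)-x_n\norm$. First, since each $\alpha_g$ is isometric, so is the componentwise action $\bar\alpha$ of $G$ on $\ell^\infty(\IN,A)$; using the identity $\norm\bar\alpha_g(y)-\bar\alpha_{g_0}(y)\norm=\norm\bar\alpha_{g_0^{-1}g}(y)-y\norm$ it suffices to verify continuity of $g\mapsto\bar\alpha_g\big((x_n)_n\big)$ at the unit $e\in G$. Thus one must show that for every $\eps>0$ there is a neighbourhood $U$ of $e$ with $\sup_n f_n(g)\le\eps$ for all $g\in U$. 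Here each $f_n$ is continuous (by point-norm continuity of $\alpha$), vanishes at $e$, and — again by isometry of the $\alpha_g$ — satisfies $f_n(g^{-1})=f_n(g)$ and $f_n(gh^{-1})\le f_n(g)+f_n(h)$. Meanwhile the hypothesis $x\in A_{\infty,\alpha}$ says precisely that $\limsup_n f_n(g)\to 0$ as $g\to e$, the norm on $A_\infty$ being $\limsup_n\norm{\cdot}\norm$.

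Fix $\eps>0$. Using local compactness of $G$, choose an open neighbourhood $V$ of $e$ whose closure $K=\overline{V}$ is compact and small enough that $\limsup_n f_n(g)<\eps/4$ for all $g\in K$. For $N\in\IN$ set $C_N=\set{g\in K: \sup_{n\ge N}f_n(g)\le\eps/4}$. Each $C_N$ is closed in $K$, because $\sup_{n\ge N}f_n$ is lower semicontinuous as a supremum of continuous functions; and the $C_N$ increase to $K$ by the choice of $K$. Since $K$ is a compact Hausdorff space, hence a Baire space, some $C_N$ has non-empty interior relative to $K$; as $V$ is dense in $K$ and open in $G$, this relative interior meets $V$, and we obtain a non-empty open set $W\subseteq G$ together with an index $N$ such that $f_n(g)\le\eps/4$ for all $g\in W$ and all $n\ge N$.

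Now $WW^{-1}$ is an open neighbourhood of $e$, and for $g=g_1g_2^{-1}$ with $g_1,g_2\in W$ and any $n\ge N$ the symmetry and subadditivity of $f_n$ give $f_n(g)\le f_n(g_1)+f_n(g_2)\le\eps/2$. For the finitely many indices $n<N$, continuity of $f_n$ at $e$ (where it vanishes) yields a neighbourhood $U_0$ of $e$ with $f_n(g)\le\eps/2$ for all $g\in U_0$ and $n<N$. Then $U=U_0\cap WW^{-1}$ is a neighbourhood of $e$ with $\sup_n f_n(g)\le\eps$ for all $g\in U$, which is exactly what was needed, and the continuity everywhere follows from the reduction above. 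The one genuinely non-formal point — and the step I expect to be the crux — is the Baire category argument: the hypothesis only controls $\limsup_n f_n(g)$, so a priori the tail index beyond which $f_n(g)$ is small depends on $g$; it is the category argument (available because each $f_n$ is continuous) that produces a single index $N$ valid simultaneously on a whole open set of group elements, after which the quasi-length inequality propagates the estimate to a neighbourhood of $e$. Everything else is routine.
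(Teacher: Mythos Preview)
Your proof is correct. The paper itself does not supply a proof but simply cites Brown's paper \cite{Brown00}, and the Baire category argument you give is precisely the one found there; so there is nothing to compare beyond noting that you have reproduced the intended argument in full detail.
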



\section{Box spaces and partitions of unity over groups}
\label{sec:box}

\begin{defi} \label{def:res-cp-approx}
Let $G$ be a second-countable, locally compact group. A residually compact approximation of $G$ is a decreasing sequence $H_{n+1}\subseteq H_n\subseteq G$ of normal, discrete, cocompact subgroups in $G$ with $\bigcap_{n\in\IN} H_n=\set{1}$.
If $G$ is a discrete group, then the subgroups $H_n$ will have finite index, in which case we call the sequence $(H_n)_n$ a residually finite approximation.
\end{defi}

\begin{rem}
In the above setting, the sequence $(H_n)_n$ is automatically a residually finite approximation of the discrete group $H_1$.
\end{rem}

Recall the definition of a box space; see \cite[Definition 10.24]{RoeCG} or \cite{Khukhro12}.

\begin{defi}
Let $\Gamma$ be a countable discrete group and $\CS=(H_n)_n$ a residually finite approximation of $\Gamma$. Let $d$ be a proper, right-invariant metric on $\Gamma$. For every $n\in\IN$, denote by $\pi_n: \Gamma\to \Gamma/H_n$ the quotient map, and $\pi_{n*}(d)$ the push-forward metric on $\Gamma/H_n$ that is induced by $d$. The box space of $\Gamma$ along $\CS$, denoted $\square_\CS \Gamma$, is the coarse disjoint union of the sequence of finite metric spaces $\big( \Gamma/H_n, \pi_{n*}(d) \big)$.
\end{defi}

The main purpose of this section will be to prove the following technical lemma:

\begin{lemma} \label{lem:decay}
Let $G$ be a second-countable, locally compact group and $\CS=(H_n)_n$ a residually compact approximation of $G$. 
Asssume that the box space $\square_\CS H_1$ has finite asymptotic dimension $d$. 
Then for every $\eps>0$ and compact set $K\subset G$, there exists $n\in\IN$ and continuous, compactly supported functions $\mu^{(0)},\dots,\mu^{(d)}: G\to [0,1]$ satisfying:
\begin{enumerate}[label={\textup{(\alph*)}},leftmargin=*]
\item for every $l=0,\dots,d$ and $h\in H_n\setminus\set{1}$, we have
\[
\supp(\mu^{(l)})\cap \supp(\mu^{(l)})\cdot h = \emptyset;
\] \label{lem:decay:a}
\item for every $g\in G$, we have
\[
\sum_{l=0}^d \sum_{h\in H_n} \mu^{(l)}(gh) = 1;
\] \label{lem:decay:b}
\item for every $l=0,\dots,d$ and $g\in K$, we have
\[
\|\mu^{(l)}(g\cdot\_)-\mu^{(l)}\|_\infty \leq \eps.
\] \label{lem:decay:c}
\end{enumerate}
\end{lemma}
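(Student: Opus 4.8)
The plan is to first reduce the problem to a purely coarse-geometric statement about the box space $\square_\CS H_1$, then to import the standard partition-of-unity characterization of finite asymptotic dimension, and finally to transport the resulting functions from the discrete quotients back up to $G$ by smoothing with a bump function along a compact transversal.

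First I would fix $\eps$ and $K$, and recall (from the standard theory, e.g.\ \cite[Theorem 4.1 or its analog for metric spaces]{HirshbergWinterZacharias15} combined with \cite{RoeCG}) that $\asdim \square_\CS H_1 \leq d$ is equivalent to the existence, for every $R>0$, of an integer $n$ and a cover of $H_1/H_n$ that decomposes into $d+1$ families of uniformly bounded, $R$-disjoint sets — equivalently, the existence of functions $\nu^{(0)},\dots,\nu^{(d)}\colon H_1/H_n \to [0,1]$ with $\sum_l \sum_{\bar h} \nu^{(l)}(\bar h)=1$ and with each $\nu^{(l)}$ supported on a union of sets that are pairwise $R$-far apart in the push-forward metric. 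Pulling these back along $\pi_n\colon H_1 \to H_1/H_n$ gives $H_n$-periodic functions on $H_1$ whose "period-reduced supports" are $R$-disjoint; choosing $R$ large enough (depending on $K$ and on the diameter of a fixed relatively compact fundamental domain for $H_1 \subseteq G$) will be what ultimately gives property \ref{lem:decay:a}.

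Next I would lift from the discrete group $H_1$ to the locally compact group $G$. Since $H_1 \subseteq G$ is cocompact and discrete, pick a relatively compact Borel fundamental domain, and then a continuous compactly supported function $\chi\colon G \to [0,1]$ with $\sum_{h\in H_1}\chi(gh)=1$ for all $g$ (a continuous partition of unity subordinate to the open cover $\{gU : g\in H_1\}$ for a suitable relatively compact open $U$). Define
\[
\mu^{(l)}(g) = \sum_{\bar h \in H_1/H_n} \nu^{(l)}(\bar h)\, \chi_{\bar h}(g),
\]
where $\chi_{\bar h}(g) = \sum_{h \in \bar h} \chi(gh)$ is the "coarsened" bump associated to the coset $\bar h$; equivalently $\mu^{(l)}(g) = \sum_{h\in H_1}\nu^{(l)}(\pi_n(h))\chi(gh)$. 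Then property \ref{lem:decay:b} is immediate: summing over $h\in H_n$ and then over cosets telescopes to $\sum_{l}\sum_{\bar h}\nu^{(l)}(\bar h)\cdot 1 = 1$. For \ref{lem:decay:a}, the support of $\mu^{(l)}$ is contained in $\bigcup\{ \supp(\chi)\cdot h^{-1} : \pi_n(h)\in\supp\nu^{(l)}\}$, and by choosing $R > \diam_d(\supp\chi \cdot \supp\chi^{-1}) + (\text{const depending on }K)$ in the previous step, the $R$-disjointness of the coset-supports of $\nu^{(l)}$ forces $\supp(\mu^{(l)})$ and $\supp(\mu^{(l)})\cdot h$ to be disjoint for $h \in H_n\setminus\{1\}$. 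For \ref{lem:decay:c}, note $\mu^{(l)}(g\cdot\_) - \mu^{(l)} = \sum_{h\in H_1}\nu^{(l)}(\pi_n(h))\big(\chi((g\cdot\_)h) - \chi(\_\,h)\big)$; since $\chi$ is uniformly continuous with compact support and the sum is locally finite with at most a fixed number $M$ of nonzero terms at each point, left-translation by the compact set $K$ moves $\chi$ by at most $\eps/M$ in sup-norm once $n$ (hence the mesh) is fine enough — more precisely, I would absorb this by first choosing $\chi$ so that $\max_{g\in K}\|\chi(g\cdot\_)-\chi\|_\infty$ is tiny, using that $K$ is compact and left translation is strongly continuous on $C_c(G)$.

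The main obstacle I anticipate is the interplay between the two scales: the coarse parameter $R$ in the box space must be chosen \emph{before} knowing which $n$ works, yet $n$ determines the fundamental-domain geometry only through $H_1$, which is fixed — so in fact the order is fine, but one must be careful that the constant relating the metric $d$ on $H_1$ to the "how far does $K$ push $\supp\chi$ around" estimate is genuinely independent of $n$. This is true because $\chi$, $U$, and the fundamental domain depend only on $H_1 \subseteq G$ and not on $n$; the only thing that shrinks with $n$ is the mesh of the cover downstairs. A second, more technical point is verifying that the coarsened bumps $\chi_{\bar h}$ are continuous and that the relevant sums are locally finite — this follows since $\supp\chi$ is compact and $H_1$ is discrete and cocompact, so only finitely many translates $\supp\chi\cdot h$ meet any compact set. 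With these bookkeeping matters settled, properties \ref{lem:decay:a}–\ref{lem:decay:c} follow as indicated.
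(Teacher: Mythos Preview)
There are two genuine gaps in your construction.

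\textbf{First, your functions $\mu^{(l)}$ are right-$H_n$-periodic, so properties \ref{lem:decay:a} and \ref{lem:decay:b} fail.}
You pull the $\nu^{(l)}$ back along $\pi_n$ to $H_n$-periodic functions on $H_1$ and set $\mu^{(l)}(g)=\sum_{h\in H_1}\nu^{(l)}(\pi_n(h))\chi(gh)$. For any $h_n\in H_n$ the substitution $h\mapsto h_n h$ is a bijection of $H_1$ fixing $\pi_n(h)$, so $\mu^{(l)}(gh_n^{-1})=\mu^{(l)}(g)$. Thus $\supp(\mu^{(l)})\cdot h_n=\supp(\mu^{(l)})$ for all $h_n\in H_n$, which is the exact opposite of \ref{lem:decay:a}, and the inner sum in \ref{lem:decay:b} diverges. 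Moreover $\mu^{(l)}$ is not compactly supported. The paper avoids this by using \emph{finitely supported} functions $\nu^{(l)}$ on $H_1$ (not on $H_1/H_n$) satisfying $\sum_l\sum_{h_n\in H_n}\nu^{(l)}(h_1h_n)=1$; these come from a sharpened version of the discrete case (\autoref{lem:decay-discrete}), and the convolution $\mu^{(l)}(g)=\sum_{h_1}C(gh_1^{-1})\nu^{(l)}(h_1)$ is then genuinely compactly supported. Your ``period-reduced supports'' remark suggests you sensed this, but the formula you wrote does not implement it.

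\textbf{Second, your argument for \ref{lem:decay:c} is incorrect as stated.}
Strong continuity of left translation on $\CC_c(G)$ tells you that $\|\chi(g\cdot\_)-\chi\|_\infty\to 0$ as $g\to 1$, for a \emph{fixed} $\chi$. It does \emph{not} let you choose, for a fixed compact $K$ (which may be far from $1$), a bump function $\chi$ with $\sum_{h\in H_1}\chi(\cdot\,h)=1$ and $\max_{g\in K}\|\chi(g\cdot\_)-\chi\|_\infty$ small. Producing such a $\chi$ is exactly the content of the paper's \autoref{lem:cutoff-function}\ref{lem:cutoff-function:2}, and it genuinely requires amenability of $G$: one starts with an arbitrary cutoff $C$ and averages it over a right-F{\o}lner set $J\subset G$ to force approximate left-$K$-invariance. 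Amenability of $G$ is available here (finite asymptotic dimension of the box space $\Rightarrow$ property A $\Rightarrow$ $H_1$ amenable $\Rightarrow$ $G$ amenable), but it is a substantive ingredient you did not invoke, and refining $n$ does nothing to help since neither $K$ nor $\chi$ depends on $n$.
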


\begin{rem}
In the case that $G=\Gamma$ is a discrete group and $\CS$ is a residually finite approximation, this is precisely \cite[Lemma 2.13]{SzaboWuZacharias17}.
In order to prove \autoref{lem:decay}, we shall convince ourselves that the desired functions can be constructed from finitely supported functions with similar properties on the cocompact subgroup $H_1$.
For this, we first have to observe a slightly improved version of \cite[Lemma 2.13]{SzaboWuZacharias17} in the discrete case.
\end{rem}

\begin{lemma} \label{lem:decay-discrete}
Let $\Gamma$ be a countable discrete group and $\CS=(H_n)_n$ a residually finite approximation of $\Gamma$. 
Asssume that the box space $\square_\CS \Gamma$ has finite asymptotic dimension $d$. 
Then for every $\eps>0$ and finite set $F\fin \Gamma$, there exists $n\in\IN$ and finitely supported functions $\nu^{(0)},\dots,\nu^{(d)}: \Gamma\to [0,1]$ satisfying:
\begin{enumerate}[label={\textup{(\alph*)}},leftmargin=*]
\item for every $l=0,\dots,d$ and $h\in H_n\setminus\set{1}$, we have
\[
g_1hg_2^{-1} \notin F \quad\text{for all } g_1,g_2\in\supp(\nu^{(l)});
\] \label{lem:decay-discrete:a}
\item for every $g\in \Gamma$, we have
\[
\sum_{l=0}^d \sum_{h\in H_n} \nu^{(l)}(gh) = 1;
\] \label{lem:decay-discrete:b}
\item for every $l=0,\dots,d$ and $g\in F$, we have
\[
\|\nu^{(l)}(g\cdot\_)-\nu^{(l)}\|_\infty \leq \eps.
\] \label{lem:decay-discrete:c}
\end{enumerate}
\end{lemma}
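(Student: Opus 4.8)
The plan is to deduce this improved version from the original \cite[Lemma 2.13]{SzaboWuZacharias17}, whose conclusion differs only in condition \ref{lem:decay-discrete:a}: there one obtains $\supp(\nu^{(l)})\cap\supp(\nu^{(l)})\cdot h=\emptyset$ for $h\in H_n\setminus\{1\}$, i.e.\ the case $F=\{1\}$ of the disjointness condition (rewritten multiplicatively: $g_1 = g_2 h$ is forbidden). Our stronger requirement is that the "collision set" $\supp(\nu^{(l)})\cdot h\cdot\supp(\nu^{(l)})^{-1}$ avoids an arbitrary prescribed finite set $F$, not just the identity. The key observation is that this can be arranged by a rescaling/coarsening trick: if we apply the original lemma not to the metric $d$ on $\Gamma$ but effectively "zoom in" by a bounded factor, the supports of the resulting functions become separated by a large distance, and any fixed finite set $F$ has bounded diameter with respect to a proper right-invariant metric.

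Concretely, first I would fix a proper right-invariant metric $d$ on $\Gamma$ and set $R=1+\max_{g\in F}d(g,1)$, so that $F\subseteq B_R(1)$. The residually finite approximation $\CS$ still has the property that $\square_\CS\Gamma$ has asymptotic dimension $d$; crucially, asymptotic dimension is a coarse invariant, so it is unchanged if we replace $d$ by $R\cdot d$ (or, more precisely, work with the box space built from the rescaled metric). Applying the original \cite[Lemma 2.13]{SzaboWuZacharias17} with this rescaled metric, with the same $\eps$, and with the finite set $F$ (which is finite in $\Gamma$ regardless of the metric, so the near-invariance condition \ref{lem:decay-discrete:c} is literally the same statement), produces $n\in\IN$ and finitely supported functions $\nu^{(0)},\dots,\nu^{(d)}\colon\Gamma\to[0,1]$ satisfying \ref{lem:decay-discrete:b} and \ref{lem:decay-discrete:c} verbatim, together with the original separation $\supp(\nu^{(l)})\cap\supp(\nu^{(l)})\cdot h=\emptyset$ for $h\in H_n\setminus\{1\}$, but now with respect to the rescaled metric. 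Unwinding the rescaling, this separation says precisely that for $h\in H_n\setminus\{1\}$ and $g_1,g_2\in\supp(\nu^{(l)})$ one has $d(g_1, g_2 h)\geq R > \max_{f\in F}d(f,1)$; by right-invariance $d(g_1,g_2h)=d(g_1(g_2h)^{-1},1)=d(g_1 h^{-1} g_2^{-1},1)$, so $g_1 h^{-1} g_2^{-1}\notin F$. Since $h$ ranges over all of $H_n\setminus\{1\}$, a symmetric group, this is exactly condition \ref{lem:decay-discrete:a} (after renaming $h\mapsto h^{-1}$, which preserves $H_n\setminus\{1\}$).

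The one point requiring care — and the main (mild) obstacle — is to make sure that the hypothesis "$\square_\CS\Gamma$ has asymptotic dimension $d$" can legitimately be transferred to the rescaled setup. The cleanest route is to note that the statement of \cite[Lemma 2.13]{SzaboWuZacharias17} is insensitive to which proper right-invariant metric one uses to define the box space: any two such metrics are coarsely equivalent on each finite quotient in a uniform way, so the coarse disjoint unions are coarsely equivalent and hence have the same asymptotic dimension, and moreover the constant appearing in the conclusion (the diameter scale at which the supports separate) can be taken as large as one wishes by the standard characterization of finite asymptotic dimension via colorings with arbitrarily large "Lebesgue number." Thus one does not even need to formally rescale the metric; it suffices to invoke the version of \cite[Lemma 2.13]{SzaboWuZacharias17} that yields separation at a prescribed scale $R$. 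Either way, all the work is bookkeeping with a proper right-invariant metric, and no genuinely new dynamical or geometric input is needed beyond the cited lemma.
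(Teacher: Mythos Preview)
Your rescaling argument has a genuine gap. The separation condition in \cite[Lemma 2.13]{SzaboWuZacharias17}, namely $\supp(\nu^{(l)})\cap\supp(\nu^{(l)})\cdot h=\emptyset$, is a purely set-theoretic statement about the group $\Gamma$; it has no metric content whatsoever. Applying the lemma ``with the rescaled metric $R\cdot d$'' therefore yields exactly the same conclusion as before---the outputs of the lemma are functions on $\Gamma$ satisfying group-theoretic conditions, and there is nothing to ``unwind.'' Your sentence ``this separation says precisely that \dots $d(g_1,g_2h)\geq R$'' is simply false as stated: disjointness gives $g_1\neq g_2h$, nothing more. Your fallback, invoking ``the version of \cite[Lemma 2.13]{SzaboWuZacharias17} that yields separation at a prescribed scale $R$,'' may well be provable by going back into the construction (choosing isometric lifts of cover sets and taking $n$ large enough that the systole of $\Gamma/H_n$ dominates $R$ plus the cover diameter), but you do not carry this out, and it is not the statement of the cited lemma.

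The paper's proof takes a completely different and more self-contained route: it applies \cite[Lemma 2.13]{SzaboWuZacharias17} as a black box to get functions $\theta^{(l)}$ with the weak (set-theoretic) disjointness and properties \ref{lem:decay-discrete:b}, \ref{lem:decay-discrete:c}, and then observes that these two properties together force $\theta^{(l)}(g_1)\leq\eps$ whenever $g_1hg_2^{-1}\in F$ for some $g_2\in\supp(\theta^{(l)})$ and $h\in H_n\setminus\{1\}$. Indeed, writing $g_1=(g_1hg_2^{-1})\cdot(g_2h^{-1})$ and using \ref{lem:decay-discrete:c} gives $|\theta^{(l)}(g_1)|\leq\eps+|\theta^{(l)}(g_2h^{-1})|=\eps$, the last equality by the disjointness. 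One then passes to $\kappa^{(l)}=(\theta^{(l)}-\eps)_+$, which kills all such problem points and hence satisfies \ref{lem:decay-discrete:a}, and finally renormalizes to recover \ref{lem:decay-discrete:b}. No metric reasoning and no reopening of the cited proof is needed.
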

\begin{proof}
Let $\eps\greater 0$ and $F\subset G$ be given.
We apply \cite[Lemma 2.13]{SzaboWuZacharias17} and choose some $n$ and finitely supported functions $\theta^{(0)},\dots,\theta^{(d)}: \Gamma\to [0,1]$ satisfying
\begin{equation} \label{eq:decay-discrete:1}
\supp(\theta^{(l)})\cap\supp(\theta^{(l)})\cdot h_n=\emptyset\quad\text{for all } h_n\in H_n\setminus\set{1};
\end{equation} 
as well as properties \ref{lem:decay-discrete:b} and \ref{lem:decay-discrete:c}.
Combining property \eqref{eq:decay-discrete:1} and \ref{lem:decay-discrete:c}, we see that if $g_1, g_2\in\supp(\theta^{(l)})$ and $h\in H_n\setminus\set{1}$ are such that $g_1hg_2^{-1}=g_1(g_2h^{-1})\in F$, then we get
\begin{equation} \label{eq:decay-discrete:2}
\vslash \theta^{(l)}(g_1) \vslash = \vslash \theta^{(l)}(g_1 hg_2^{-1} \cdot g_2 h^{-1}) \vslash \stackrel{\ref{lem:decay-discrete:c}}{\leq} \eps+\vslash \theta^{(l)}(g_2h^{-1}) \vslash \stackrel{\eqref{eq:decay-discrete:1}}{=} \eps. 
\end{equation}
Let us define new functions $\kappa^{(l)}: \Gamma\to [0,1]$ via
\begin{equation} \label{eq:decay-discrete:3}
\kappa^{(l)}(g) = \big( \theta^{(l)}(g)-\eps \big)_+ .
\end{equation}
These new functions clearly still satisfy property \ref{lem:decay-discrete:c}.
For any $g_1, g_2\in \supp(\kappa^{(l)})$, we evidently have $g_1, g_2\in\supp(\theta^{(l)})$, so assuming $g_1hg_2^{-1}\in F$ for some $h\in H_n\setminus\set{1}$ would imply $\kappa^{(l)}(g_1)=0$ by \eqref{eq:decay-discrete:2} and \eqref{eq:decay-discrete:3}, a contradiction.
In particular we obtain property \ref{lem:decay-discrete:a} for these functions.

Lastly, note that property \ref{lem:decay-discrete:a} implies that any sum as in \ref{lem:decay-discrete:b} can have at most $d+1$ non-vanishing summands, and thus we may estimate for all $g\in\Gamma$ that
\[
\begin{array}{ccl}
1 &=& \dst\sum_{l=0}^d \sum_{h\in H_n} \theta^{(l)}(gh) \\
&\geq& \dst\sum_{l=0}^d \sum_{h\in H_n} \kappa^{(l)}(gh) \\
&\geq& \dst\Big( \sum_{l=0}^d \sum_{h\in H_n} \theta^{(l)}(gh) \Big)-(d+1)\eps \\
&=& 1-(d+1)\eps.
\end{array}
\]
So let us yet again define new functions $\nu^{(l)}: \Gamma\to [0,1]$ via
\[
\nu^{(l)}(g) = \Big( \sum_{l=0}^d \sum_{h\in H_n} \kappa^{(l)}(gh) \Big)^{-1} \kappa^{(l)}(g).
\]
By our previous calculation, we have $\kappa^{(l)} \leq \nu^{(l)} \leq \frac{1}{1-(d+1)\eps}\kappa^{(l)}$.
For these functions, property \ref{lem:decay-discrete:a} will still hold, while property \ref{lem:decay-discrete:b} holds by construction.
Moreover property \ref{lem:decay-discrete:c} holds with regard to the tolerance
\[
\eta_\eps := \eps + \frac{2(d+1)\eps}{1-(d+1)\eps}
\]
in place of $\eps$.
Since $\eta_\eps\to 0$ as $\eps\to 0$, this means that the functions $\nu^{(l)}$ will have the desired property after rescaling $\eps$.
This shows our claim.
\end{proof}

\begin{lemma} \label{lem:cutoff-function}
Let $G$ be a locally compact group and $H\subset G$ a closed and cocompact subgroup. Let $\mu$ be a left-invariant Haar measure on $H$. 
\begin{enumerate}[label={\textup{(\roman*)}},leftmargin=*]
\item There exists a compactly supported continuous function $C: G\to [0,\infty)$ satisfying the equation
\[
\int_H C(gh)~d\mu(h)=1\quad\text{for all}~g\in G.
\] \label{lem:cutoff-function:1}
\item Assume furthermore that $G$ is amenable. Let $\eps\greater 0$ and let $K\subset G$ be a compact subset.
Then there exists a function $C$ as above with the additional property that
\[
\norm C(g\cdot\_) - C \norm_\infty \leq \eps. 
\] \label{lem:cutoff-function:2}
\end{enumerate}
\end{lemma}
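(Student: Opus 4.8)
The plan is to obtain part (i) by normalising an arbitrary compactly supported bump function on $G$, and to obtain part (ii) by averaging such a function against an approximately left-invariant probability density supplied by amenability. In particular part (i) is just the cocompact (hence compactly supported) special case of the existence of a Bruhat function, so it is essentially well known, but a direct argument is short.

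For part (i), I would first exploit cocompactness of $H$: the quotient map $\pi\colon G\to G/H$ is open onto a compact space, so finitely many images of relatively compact open subsets of $G$ cover $G/H$, and their union is a relatively compact open set $V\subseteq G$ with $\pi(V)=G/H$. Urysohn's lemma for locally compact Hausdorff spaces then produces $f\in C_c(G)$ with $0\le f\le 1$ and $f\equiv 1$ on $\overline V$. Put $F(g)=\int_H f(gh)\,d\mu(h)$. Left-invariance of $\mu$ gives $F(gh')=F(g)$ for all $h'\in H$, so $F$ factors through $G/H$; it is continuous, since for $g$ near a fixed point only the $h\in H$ with $gh\in\supp(f)$ contribute, which is a fixed compact subset of $H$, so that continuity of $F$ reduces to uniform continuity of $f$; and it is strictly positive, because for every $g$ there is $h_0\in H$ with $gh_0\in V$, whence $f(gh_0)=1$. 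By compactness of $G/H$ one has $F\ge c>0$, and then $C:=f/F$ is continuous, nonnegative, has the same compact support as $f$, and satisfies $\int_H C(gh)\,d\mu(h)=F(g)^{-1}\int_H f(gh)\,d\mu(h)=1$ for all $g$.

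For part (ii), the crucial point is that the identity characterising (i) is preserved under left translations $C(\,\cdot\,)\mapsto C(s^{-1}\,\cdot\,)$, and hence under any average of left translates. So, fixing a left Haar measure $ds$ on $G$ and a function $C_0$ as in (i), I would set $C=\xi*C_0$ for a nonnegative $\xi\in C_c(G)$ with $\int_G\xi\,ds=1$, still to be chosen; then $C\in C_c(G)$, $C\ge 0$, and by Tonelli's theorem together with the identity for $C_0$ one gets $\int_H C(gh)\,d\mu(h)=\int_G\xi(s)\,ds=1$. A change of variables (valid by left-invariance of $ds$) yields, for $g\in K$ and $x\in G$,
\[
|C(gx)-C(x)|\ \le\ \int_G|\xi(gs)-\xi(s)|\,|C_0(s^{-1}x)|\,ds\ \le\ \|C_0\|_\infty\int_G|\xi(gs)-\xi(s)|\,ds,
\]
so it remains to choose $\xi$ with $\int_G|\xi(gs)-\xi(s)|\,ds\le\eps/\|C_0\|_\infty$ for all $g\in K$. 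This is exactly the approximate left-invariance of probability densities in $C_c(G)$ (Reiter's condition, applied to the compact set $K^{-1}$ and then combined with a density argument), which is available because $G$ is amenable.

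I do not expect a genuine obstacle. The two spots needing a little care are the continuity of $F$ in (i) --- the routine fact that fibrewise integration of a compactly supported continuous function over a closed subgroup produces a continuous function --- and pinning down the exact form of the amenability input in (ii); working throughout with left Haar measures on both $G$ and $H$ keeps all modular functions out of the estimates.
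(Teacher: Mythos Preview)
Your proof is correct and follows essentially the same approach as the paper. Part (i) is identical in structure: build a Bruhat function by normalising a bump supported on a set surjecting onto $G/H$. For part (ii) both arguments average the function from (i) against an approximately invariant density coming from amenability; the only cosmetic difference is that the paper averages against the normalised indicator of a F{\o}lner set (right Haar measure, via Emerson--Greenleaf), while you convolve with a Reiter density in $C_c(G)$ with respect to left Haar measure --- equivalent inputs leading to the same estimate.
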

\begin{proof}
\ref{lem:cutoff-function:1}: As $H$ is a cocompact subgroup, there exists some compact set $K_{H}\subset G$ such that $G=K_H\cdot H$.
By Urysohn-Tietze, we may choose a compactly supported continuous function $c: G\to [0,1]$ with $c|_{K_{H}}=1$. 
Define the compact set $K_c\subset H$ via
\[
K_c = \Big( K_{H}^{-1}\cdot\supp(c) \Big)\cap H.
\]
Then for every $g\in G$, there is some $h_0\in H$ with $gh_0\in K_{H}$.
We have 
\[
\begin{array}{ccl}
\supp\big(c(gh_0\cdot\_)\big)\cap H &=&  \Big( (gh_0)^{-1}\cdot\supp(c) \Big)\cap H \\
&\subset& K_c.
\end{array}
\]
Thus, we get that
\[
0 < \int_H c(gh)~d\mu(h) = \int_H c(gh_0h)~d\mu(h) \leq \mu(K_c)<\infty.
\]
Note that by the properties of the Haar measure, the assignment
\[
\CI : G\to (0,\infty),\quad g\mapsto \int_H c(gh)~d\mu(h)
\]
is $H$-periodic. 
Then the above computation shows that this assignment yields a well-defined, continuous function on $G$, which by $H$-periodicity and cocompactness of $H$ can be viewed as a continuous function on the compact space $G/H$.
Thus the image of this function is compact.
In particular, its (pointwise) multiplicative inverse is also bounded and continuous. Let us define
\[
C: G\to [0,\infty),\quad g\mapsto \CI(g)^{-1} c(g).
\]
Then this again yields a continuous function on $G$ with compact support, but with the property that
\begin{equation} \label{eq:cutoff:1}
\int_H C(gh)~d\mu(h)=1\quad\text{for all}~g\in G.
\end{equation}

\ref{lem:cutoff-function:2}: Let us now additionally assume that $G$ is amenable.
Let $\eps\greater 0$ and $K\subset G$ be given as in the statement.
Let $\rho^G$ denote a right-invariant Haar measure on $G$.
It follows from \cite{EmersonGreenleaf67} that we may find some compact set $J\subset G$ with $\rho^G(J)\greater 0$ such that $\rho^G\big( J\Delta (J\cdot K) \big)\leq \frac{\eps}{\norm C\norm_\infty}\cdot \rho^G(J)$.
Define $C': G\to [0,\infty)$ via
\[
C'(g)=\frac{1}{\rho^G(J)} \cdot \int_J C(xg) ~d\rho^G(x).
\]
Clearly $C'$ is yet another continuous function with compact support contained in $J^{-1}\cdot\supp(C)$.
Given any element $g\in G$, we compute
\[
\begin{array}{ccl}
\dst \int_H C'(gh)~d\mu(h) &=& \dst \int_H \frac{1}{\rho^G(J)} \Big( \int_J C(xgh)~d\rho^G(x) \Big) ~d\mu(h) \\
&=& \dst\frac{1}{\rho^G(J)} \int_J \Big( \int_H C(xgh)~d\mu(h) \Big)~d\rho^G(x) \\
&\stackrel{\eqref{eq:cutoff:1}}{=}& \dst \frac{1}{\rho^G(J)} \int_J 1 ~d\rho^G(x) \ = \ 1.
\end{array}
\]
Furthermore, we have for any $g_K\in K$ and $g\in G$ that
\[
\begin{array}{ccl}
\vslash C'(g_Kg)-C'(g) \vslash &=& \dst \frac{1}{\rho^G(J)} \cdot \Big\vslash \int_J C(xg_Kg) ~d\rho^G(x) - \int_J C(xg) ~d\rho^G(x)\Big\vslash \\
&\leq& \dst \frac{1}{\rho^G(J)} \cdot \norm C\norm_\infty\cdot \rho^G(J\Delta Jg_K) \\
&\leq& \eps.
\end{array}
\]
This shows the last part of the claim.
\end{proof}

\begin{proof}[Proof of \autoref{lem:decay}]
We first remark that since the box space $\square_\CS H_1$ has finite asymptotic dimension, it also has property A, and therefore $H_1$ is amenable; see \cite[Theorems 4.3.6 and 4.4.6]{NowakYuLSG} and \cite[Proposition 11.39]{RoeCG}.
As $H_1$ is a discrete cocompact normal subgroup in $G$, we also see that $G$ is amenable.

Let $\eps\greater 0$ and $K\subset G$ be given.
Then there exists a function $C: G\to [0,\infty)$ as in \autoref{lem:cutoff-function} for $H_1$ in place of $H$, with the property that
\begin{equation} \label{eq:decay:1}
\norm C(g\cdot\_)-C\norm_\infty \leq \eps \quad\text{for all } g\in K.
\end{equation}
Let us denote the support of $C$ by $S=\supp(C)$.
As $H_1$ is discrete in $G$ and $S$ is compact, there exists a finite set $F\subset H_1$ with 
\begin{equation} \label{eq:decay:2}
h_1\in F \quad\text{whenever}\quad h_1\in H \text{ and } S\cap Sh_1\neq\emptyset.
\end{equation}
Applying \autoref{lem:decay-discrete}, there exists some $n$ and finitely supported functions $\nu^{(0)},\dots,\nu^{(d)}: H_1\to [0,1]$ satisfying the following properties:\footnote{Note that we will reserve the notation $h_1, h_2$ for elements in $H_1$, whereas $h_n$ will denote an element in the smaller subgroup $H_n$ for $n\greater 2$.}
\begin{equation} \label{eq:decay:3}
h_1h_nh_2^{-1}\notin F \quad\text{for all } h_1,h_2\in\supp(\nu^{(l)}) \text{ and } h_n\in H_n\setminus\set{1};
\end{equation}
\begin{equation} \label{eq:decay:4}
1=\sum_{l=0}^d \sum_{h_n\in H_n} \nu^{(l)}(h_1h_n) \quad\text{for all } h_1\in H_1.
\end{equation}
We define $\mu^{(l)}: G\to [0,\infty)$ for $l=0,\dots,d$ via
\[
\mu^{(l)}(g) = \sum_{h_1\in H_1} C(gh_1^{-1})\nu^{(l)}(h_1).
\]
Since $\nu^{(l)}$ is finitely supported on $H_1$, we see that $\mu^{(l)}$ is a finite sum of continuous functions with compact support, and hence $\mu^{(l)}\in\CC_c(G)$.

We claim that these functions have the desired properties.
Let us verify \ref{lem:decay:a}, which is equivalent to the statement that
\[
\mu^{(l)}(g)\cdot \mu^{(l)}(gh_n^{-1})=0\quad\text{for all }g\in G \text{ and } h_n\in H_n\setminus\set{1}.
\]
Fix an element $h_n\in H_n\setminus\set{1}$ for the moment.
We compute
\[
\begin{array}{ccl}
\mu^{(l)}(g)\cdot \mu^{(l)}(gh_n^{-1}) &=& \dst\sum_{h_1, h_2\in H_1} C(gh_1^{-1}) C(gh_n^{-1}h_2^{-1}) \nu^{(l)}(h_1) \nu^{(l)}(h_2) \\
&=& \dst\sum_{h_1, h_2\in H_1} C(gh_1^{-1}) C(gh_2^{-1}) \nu^{(l)}(h_1) \nu^{(l)}(h_2h_n^{-1})
\end{array}
\]
We claim that each individual summand is zero.
Indeed, suppose $h_1, h_2\in H_1$ are such that $\nu^{(l)}(h_1) \nu^{(l)}(h_2 h_n^{-1}) \greater 0$.
Then $h_1\in\supp(\nu^{(l)})$ and $h_2\in\supp(\nu^{(l)})\cdot h_n$, which implies $h_1h_2^{-1}\notin F$ by \eqref{eq:decay:3}.
By our choice of $F$, we obtain
\[
\begin{array}{ccl}
\supp(C(\_\cdot h_1^{-1}))\cap\supp(C(\_\cdot h_2^{-1})) &\subseteq& Sh_1\cap Sh_2 \\
&=& (Sh_1h_2^{-1}\cap S)\cdot h_2 \\
&\stackrel{\eqref{eq:decay:2}}{=}& \emptyset,
\end{array}
\]
and in particular $C(gh_1^{-1})C(gh_2^{-1})=0$.
This finishes the proof that each summand of the above sum is zero and shows property \ref{lem:decay:a}.

Let us now show property \ref{lem:decay:b}.
We calculate for every $g\in G$ that
\[
\begin{array}{ccl}
\dst\sum_{l=0}^d \sum_{h_n\in H_n} \mu^{(l)}(gh_n) &=& \dst\sum_{l=0}^d \sum_{h_n\in H_n} \sum_{h_1\in H_1} C(gh_nh_1^{-1})\nu^{(l)}(h_1) \\
&=& \dst\sum_{l=0}^d \sum_{h_n\in H_n} \sum_{h_1\in H_1} C(gh_1^{-1})\nu^{(l)}(h_1h_n) \\
&=& \dst\sum_{h_1\in H_1} C(gh_1^{-1}) \Big( \sum_{l=0}^d \sum_{h_n\in H_n} \nu^{(l)}(h_1h_n) \Big) \\
&\stackrel{\eqref{eq:decay:4}}{=}& \dst\sum_{h_1\in H_1} C(gh_1) \\
&\stackrel{\ref{lem:cutoff-function}}{=}& 1 .
\end{array}
\]
Let us now turn to \ref{lem:decay:c}.
Given any $g\in G$ and $g_K\in K$, we compute
\[
\begin{array}{ccl}
\vslash \mu^{(l)}(g_Kg)-\mu^{(l)}(g)\vslash &=& \dst\Big\vslash \sum_{h_1\in H_1} \big( C(g_Kgh_1^{-1})-C(gh_1^{-1}) \big) \nu^{(l)}(h_1) \Big\vslash \\
&\stackrel{\eqref{eq:decay:4}}{\leq}& \dst \sup_{h_1\in H_1} \vslash C(g_Kgh_1^{-1})-C(gh_1^{-1}) \vslash \\
&\leq& \dst\norm C(g_K\cdot\_)-C\norm_\infty \ \stackrel{\eqref{eq:decay:2}}{\leq} \ \eps.
\end{array}
\]
As $g\in G$ was arbitrary, this finishes the proof.
\end{proof}

\begin{reme}
Let $G$ be a locally compact group and $H\subset G$ a closed, cocompact subgroup.
For any \cstar-algebra $A$, we may naturally view $\CC(G/H,A)$ as a \cstar-subalgebra of (right-)$H$-periodic functions inside $\CC_b(G,A)$ by assigning a function $f$ to the function $f'$ given by $f'(g)=f(gH)$.

In what follows, we will briefly establish a technical result that allows one to perturb {\it approximately} $H$-periodic functions in $\CC_b(G,A)$ to {\it exactly} $H$-periodic functions in a systematic way.
\end{reme}

\begin{lemma} \label{lem:periodic-expect}
Let $G$ be a locally compact group and $H\subset G$ a closed, cocompact subgroup. 
Let $A$ be a \cstar-algebra. 
Then there exists a conditional expectation $E: \CC_b(G, A)\to\CC(G/H, A)$ with the following property.

For every $\eps>0$ and compact set $K\subset G$, there exists $\delta>0$ and a compact set $J\subset H$ such that the following holds: 

If $f\in\CC_b(G,A)$ satisfies
\[
\max_{g\in K}~\max_{h\in J}~\|f(g)-f(gh)\|\leq\delta,
\]
then
\[
\|f-E(f)\|_{\infty,K}\leq\eps.
\]
\end{lemma}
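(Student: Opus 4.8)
The plan is to construct the conditional expectation $E$ by averaging along $H$ against the cutoff function provided by \autoref{lem:cutoff-function}\ref{lem:cutoff-function:1}. Fix a left-invariant Haar measure $\mu$ on $H$ and a compactly supported continuous $C: G\to[0,\infty)$ with $\int_H C(gh)\,d\mu(h)=1$ for all $g\in G$. For $f\in\CC_b(G,A)$ define
\[
E(f)(g) = \int_H C(gh)\,f(gh)\,d\mu(h).
\]
First I would check that $E(f)$ is well-defined, bounded (by $\|f\|_\infty$, using positivity of $C$ and the normalization), continuous, and genuinely $H$-periodic: replacing $g$ by $gh_0$ with $h_0\in H$ and substituting $h\mapsto h_0^{-1}h$ uses left-invariance of $\mu$ and leaves the integral unchanged, so $E(f)\in\CC(G/H,A)$. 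That $E$ is linear, positive, contractive, and restricts to the identity on $\CC(G/H,A)$ (again by the normalization of $C$) is routine; the $\CC(G/H,A)$-bimodule property follows since periodic functions pull out of the integral. Hence $E$ is a conditional expectation.

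Next I would prove the approximation property. The support of $C$ is a compact set $S\subset G$, and since the integrand $C(gh)f(gh)$ is supported (in $h$) on $g^{-1}S\cap H$, the relevant values of $h$ range over a $g$-dependent compact subset of $H$; as $g$ runs over the fixed compact $K$, these subsets are contained in a single compact set $J:=(K^{-1}S)\cap H\subset H$. Now for $g\in K$ I would estimate, using $\int_H C(gh)\,d\mu(h)=1$,
\[
\|f(g)-E(f)(g)\| = \Bigl\| \int_H C(gh)\bigl(f(g)-f(gh)\bigr)\,d\mu(h) \Bigr\| \leq \sup_{h\in J}\|f(g)-f(gh)\|,
\]
since the integral of $C(gh)$ against a function of norm at most $\sup_{h\in J}\|f(g)-f(gh)\|$ is bounded by that supremum. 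Thus if $f$ satisfies $\max_{g\in K}\max_{h\in J}\|f(g)-f(gh)\|\le\delta$, then $\|f-E(f)\|_{\infty,K}\le\delta$, so one may simply take $\delta=\eps$. This $J$ and $\delta$ depend only on $K$ (through $S$, which is fixed with $E$) and on $\eps$, as required.

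The main obstacle is really just bookkeeping rather than a conceptual difficulty: one must be careful that the compact set $J$ is chosen uniformly over $g\in K$ (not depending on the individual $g$), and that the measure-theoretic manipulations — well-definedness of the integral, continuity of $g\mapsto E(f)(g)$, and the substitution establishing $H$-periodicity — are justified given only that $C$ is compactly supported and continuous and $f$ is bounded continuous. Continuity of $E(f)$ follows from dominated convergence together with the fact that the integrand is supported, locally uniformly in $g$, on a fixed compact subset of $H$; the remaining points are standard properties of Haar integration. No hypothesis beyond cocompactness of $H$ is needed, and in particular amenability of $G$ plays no role here (in contrast to part \ref{lem:cutoff-function:2} of \autoref{lem:cutoff-function}).
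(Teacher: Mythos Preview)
Your proposal is correct and follows essentially the same approach as the paper: the same averaging formula $E(f)(g)=\int_H C(gh)f(gh)\,d\mu(h)$ against the cutoff function from \autoref{lem:cutoff-function}\ref{lem:cutoff-function:1}, and the same choice $J=(K^{-1}S)\cap H$. Your final estimate is in fact slightly sharper than the paper's --- you exploit the normalization $\int_H C(gh)\,d\mu(h)=1$ to get $\|f(g)-E(f)(g)\|\leq\sup_{h\in J}\|f(g)-f(gh)\|$ directly, allowing $\delta=\eps$, whereas the paper bounds the integral more crudely by $\mu(J)\cdot\|C\|_\infty\cdot\delta$ and takes $\delta=\eps/(1+\mu(J)\|C\|_\infty)$.
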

\begin{proof}
Let $\mu$ be a left-invariant Haar measure on $H$. Let $C\in\CC_c(G)$ be a function as in \autoref{lem:cutoff-function}. Then we define
\[
E: \CC_b(G,A)\to\CC(G/H,A),\quad E(f)(gH)=\int_H C(gh)f(gh)~d\mu(h).
\]
Since $C$ is compactly supported and the Haar measure $\mu$ is left-invariant, it is clear that $E$ is well-defined and indeed a conditional expectation.
Let $\eps>0$ and $K\subset G$ be given. 
Let $S$ be the compact support of $C$. 
Then the set $J:= (K^{-1}S)\cap H$ is compact in $H$ with the property that
\begin{equation} \label{eq:periodic-expect:1}
g\in K ~\text{and}~ gh\in S \implies h\in J
\end{equation}
for all $h\in H$. Set 
\[
\delta=\frac{\eps}{1+\mu(J)\cdot\|C\|_\infty}.
\] 
For every $f\in\CC_b(G,A)$ with
\[
\max_{g\in K}~\max_{h\in J}~\|f(g)-f(gh)\|\leq\delta,
\]
it follows for every $g\in K$ that
\[
\begin{array}{cl}
\multicolumn{2}{l}{ \|f(g)-E(f)(gH)\| } \\
=& \dst \Big\| \Big(\int_H C(gh)~d\mu(h) \Big) f(g)-\int_H C(gh)f(gh)~d\mu(h) \Big\| \\
\stackrel{\eqref{eq:periodic-expect:1}}{=}& \dst \Big\| \int_J C(gh)(f(g)-f(gh)) ~d\mu(h) \Big\| \\
\leq& \dst \mu(J)\cdot\|C\|_\infty\cdot\delta ~\leq~ \eps.
\end{array}
\]
This shows our claim.
\end{proof}

\begin{cor} \label{cor:cpc-perturb}
Let $G$ be a locally compact group and $H\subset G$ a closed, cocompact subgroup. Let $A$ and $B$ be two \cstar-algebras. 
Then for every $\eps>0$, $F\fin B$ and compact set $K\subset G$, there exists $\delta>0$ and a compact set $J\subset H$ such that the following holds: 

If $\Theta: B\to\CC_b(G,A)$ is a c.p.c.\ map with
\[
\max_{g\in K}~\max_{h\in J}~\|\Theta(b)(g)-\Theta(b)(gh)\|\leq\delta \quad\text{for all}~b\in F,
\]
then there exists a c.p.c.\ map $\Psi: B\to\CC(G/H,A)$ with
\[
\max_{g\in K}~\|\Psi(b)(gH)-\Theta(b)(g)\|\leq\eps \quad\text{for all}~b\in F.
\]
\end{cor}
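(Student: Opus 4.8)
The plan is to obtain $\Psi$ simply by composing $\Theta$ with the conditional expectation $E\colon \CC_b(G,A)\to\CC(G/H,A)$ furnished by \autoref{lem:periodic-expect}. First I would recall that any conditional expectation between \cstar-algebras is completely positive and contractive, so that $\Psi := E\circ\Theta$ is automatically a c.p.c.\ map from $B$ to $\CC(G/H,A)$. It then remains only to verify the approximation property, and this is exactly what \autoref{lem:periodic-expect} delivers once applied to the finitely many functions $\Theta(b)$, $b\in F$.

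More precisely, given $\eps>0$, $F\fin B$ and a compact set $K\subset G$, I would invoke \autoref{lem:periodic-expect} with these $\eps$ and $K$ to produce the promised $\delta>0$ and compact set $J\subset H$; note that these do not depend on the particular function being perturbed. Now suppose $\Theta\colon B\to\CC_b(G,A)$ is a c.p.c.\ map with $\max_{g\in K}\max_{h\in J}\|\Theta(b)(g)-\Theta(b)(gh)\|\leq\delta$ for all $b\in F$. Applying \autoref{lem:periodic-expect} to $f=\Theta(b)$ for each $b\in F$ yields $\|\Theta(b)-E(\Theta(b))\|_{\infty,K}\leq\eps$; unravelling the definition of $E$ and viewing $E(\Theta(b))\in\CC(G/H,A)$ as an $H$-periodic function on $G$, this says precisely $\max_{g\in K}\|\Psi(b)(gH)-\Theta(b)(g)\|\leq\eps$, as required.

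There is no serious obstacle here: the statement is a formal consequence of \autoref{lem:periodic-expect}. The only points worth spelling out are that a conditional expectation is c.p.c.\ (so that the composition $E\circ\Theta$ is legitimately a c.p.c.\ map into $\CC(G/H,A)$), and that the constants $\delta$ and $J$ produced by the lemma are uniform in the input function, which is what allows one to handle all members of the finite set $F$ simultaneously with a single choice of $\delta$ and $J$.
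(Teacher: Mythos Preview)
Your proposal is correct and matches the paper's own proof essentially verbatim: the paper also defines $\Psi = E\circ\Theta$ with $E$ the conditional expectation from \autoref{lem:periodic-expect}, chooses $\delta$ and $J$ from that lemma for the pair $(\eps,K)$, and concludes directly.
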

\begin{proof}
Let $E: \CC_b(G,A)\to\CC(G/H,A)$ be a conditional expectation as in \autoref{lem:periodic-expect}.
Given a triple $(\eps,F,K)$, choose $\delta>0$ and $J\subset H$ so that the property in \autoref{lem:periodic-expect} holds for all $f\in\CC_b(G,A)$ with respect to the pair $(\eps,K)$.
Then we can directly conclude that if $\Theta$ is a map as in the statement, then $\Psi=E\circ\Theta$ has desired property.
\end{proof}


\section{Systems generated by order zero maps with commuting ranges}

The following notation and observations are \cite[Lemma 6.6]{HirshbergSzaboWinterWu17} and originate in \cite[Section 5]{HirshbergWinterZacharias15}.

\begin{nota} \label{nota:multi-cones}
Let $D_1,\dots, D_n$ be finitely many unital \cstar-algebras.
For $t\in [0,1]$ and $j=1,\dots,n$, we denote
\[
D_j^{(t)} := \begin{cases} D_j &,\quad t>0, \\ \IC\cdot\eins_{D_j} &,\quad t=0. \end{cases}
\]
Given moreover a tuple $\vec{t}=(t_1,\dots,t_n)\in [0,1]^n$, let us denote 
\[
D^{(\vec{t})} := D_1^{(t_1)}\otimes_{\max} D_2^{(t_2)}\otimes_{\max}\dots\otimes_{\max} D_n^{(t_n)}.
\]
Consider the simplex
\[
\Delta^{(n)} := \set{ \vec{t}\in [0,1]^n \mid t_1+\dots+t_n=1}
\]
and set
\[
\CE(D_1,\dots,D_n) := \set{ f\in \CC\Big( \Delta^{(n)}, D_1\otimes_{\max}\dots\otimes_{\max} D_n  \Big) \ \Big| \ f(\vec{t})\in D^{(\vec{t})} }.
\]
In the case that $D_j=D$ are all the same \cstar-algebra, we will write 
\[
\CE(D_1,\dots,D_n) =: \CE(D,n)
\] 
instead.
For every $j=1,\dots,n$, we will consider the canonical c.p.c.\ order zero map
\[
\eta_j: D_j \to \CE(D_1,\dots,D_n)
\]
given by 
\[
\eta_j(d_j)(\vec{t})=t_j\cdot (\eins_{D_1}\otimes\dots\otimes\eins_{D_{j-1}}\otimes d_j\otimes\eins_{D_{j+1}}\otimes\dots\otimes\eins_{D_n}).
\]
One easily checks that the ranges of the maps $\eta_j$ generate $\CE(D_1,\dots,D_n)$ as a \cstar-algebra.
\end{nota}

\begin{prop} \label{prop:multi-cones-property}
Let $D_1,\dots, D_n$ be unital \cstar-algebras.
Then the \cstar-algebra $\CE(D_1,\dots,D_n)$ together with the c.p.c.\ order zero maps $\eta_j: D_j\to\CE(D_1,\dots,D_n)$ satisfies the following universal property:

If $B$ is any unital \cstar-algebra and $\psi_j: D_j\to B$ for $j=1,\dots,n$ are c.p.c.\ order zero maps with pairwise commuting ranges and 
\[
\psi_1(\eins_{D_1})+\dots+\psi_n(\eins_{D_n})=\eins_B,
\]
then there exists a unique unital $*$-homomorphism $\Psi: \CE(D_1,\dots,D_n)\to B$ such that $\Psi\circ\eta_j = \psi_j$ for all $j=1,\dots,n$.
\end{prop}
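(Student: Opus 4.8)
The plan is to construct $\Psi$ concretely on the generating order zero maps and then verify it is a well-defined $*$-homomorphism via the Stinespring-type picture of order zero maps. First I would recall the structure theorem for c.p.c.\ order zero maps: each $\psi_j: D_j\to B$ corresponds to a $*$-homomorphism $\pi_j: D_j\to \CM(\mathrm{C}^*(\psi_j(D_j)))\subseteq B^{**}$ together with a positive contraction $h_j=\psi_j(\eins_{D_j})$ in the center of that multiplier algebra, so that $\psi_j(d)=h_j\pi_j(d)$. Since the ranges of the $\psi_j$ pairwise commute, the associated $*$-homomorphisms $\pi_j$ have pairwise commuting ranges as well (the supporting algebras and their multipliers live in the commutant of each other), and the positive elements $h_1,\dots,h_n$ pairwise commute; moreover $h_1+\dots+h_n=\eins_B$ by hypothesis. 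Thus the $\pi_j$ assemble into a single $*$-homomorphism $\pi: D_1\otimes_{\max}\dots\otimes_{\max} D_n\to B^{**}$ (using the maximal tensor product precisely because we only have commuting ranges, not an independence assumption), and the $h_j$ generate a commutative $\mathrm{C}^*$-algebra with a partition-of-unity relation — i.e.\ a unital $*$-homomorphism $\mathrm{C}(\Delta^{(n)})\to B$ sending the $j$-th barycentric coordinate function to $h_j$.

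Next I would use the universal description of $\CE(D_1,\dots,D_n)$: by construction it is generated by the images of $\eta_j$, and an element of $\CE(D_1,\dots,D_n)$ is a continuous section $\vec t\mapsto f(\vec t)\in D^{(\vec t)}$. The key point is that $\CE(D_1,\dots,D_n)$ is the subalgebra of $\mathrm{C}(\Delta^{(n)})\otimes(D_1\otimes_{\max}\dots\otimes_{\max} D_n)$ generated by the elements $\eta_j(d_j)$, and the defining relations among these generators are exactly: each $\eta_j$ is order zero, the ranges commute, $\sum_j\eta_j(\eins_{D_j})=\eins$, and — crucially — the "collapsing" relations encoding that $f(\vec t)\in D^{(\vec t)}$, i.e.\ that when $t_j=0$ the $j$-th tensor leg degenerates to scalars. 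I would therefore define $\Psi$ on generators by $\Psi(\eta_j(d_j)) = h_j\pi_j(d_j) = \psi_j(d_j)$ and check that the target data $(\psi_j)_j$ satisfies all these relations in $B$: order zero and commuting ranges and the unit relation are hypotheses, and the collapsing relations hold because $h_j\pi_j(d_j)$ only involves the leg $D_j$ through a factor that vanishes with $h_j$. A clean way to make this rigorous without belaboring generators and relations is to note that $\pi$ and the map $\mathrm{C}(\Delta^{(n)})\to B$ have commuting ranges, so they combine into a $*$-homomorphism $\Phi:\mathrm{C}(\Delta^{(n)})\otimes(D_1\otimes_{\max}\dots\otimes_{\max} D_n)\to B$, and then one verifies $\Phi$ restricted to the subalgebra $\CE(D_1,\dots,D_n)$ lands in $B$ with $\Phi\circ\eta_j=\psi_j$; unitality of $\Phi|_{\CE}$ follows from $\sum h_j=\eins_B$.

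The remaining points are uniqueness and the subtlety about whether $\Phi$ is genuinely defined on the algebraic/topological tensor product rather than just formally on generators. Uniqueness is immediate: any unital $*$-homomorphism $\Psi$ with $\Psi\circ\eta_j=\psi_j$ is determined on the generators $\eta_j(D_j)$, which generate $\CE(D_1,\dots,D_n)$ as a $\mathrm{C}^*$-algebra. The step I expect to be the main obstacle is the well-definedness of the combined map on the maximal tensor product and the verification that it respects the section condition $f(\vec t)\in D^{(\vec t)}$: one must be careful that the supporting $*$-homomorphisms $\pi_j$ of commuting order zero maps really do have commuting ranges in a common $\mathrm{C}^*$-algebra (this uses that the commutant of a $\mathrm{C}^*$-algebra contains the multiplier algebra of any orthogonal piece, together with a standard approximation/continuity argument passing from commuting ranges of $\psi_j$ to commuting ranges of $\pi_j$), and that the degeneracy at faces $t_j=0$ is automatic from $\eta_j$ being order zero — i.e.\ $\Psi(\eta_j(d_j))$ restricted to where $\psi_j(\eins)$ is $0$ already kills the $D_j$-dependence. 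Once these compatibility checks are in place, $\Psi$ is the desired map. This is precisely the content recorded in \cite[Lemma 6.6]{HirshbergSzaboWinterWu17}, and I would simply cite the order zero structure theorem and the construction of $\CE(D_1,\dots,D_n)$ to keep the argument short.
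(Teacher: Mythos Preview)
The paper does not supply its own proof of this proposition; it is recalled from \cite[Lemma 6.6]{HirshbergSzaboWinterWu17} (originating in \cite[Section 5]{HirshbergWinterZacharias15}) and stated without argument. Your sketch is the standard proof behind that lemma and is correct, including the point you flag as the main subtlety: the supporting $*$-homomorphisms $\pi_j$ and hence the combined map $\Phi$ a priori only land in $B^{**}$, but since $\Phi(\eta_j(d_j))=\psi_j(d_j)\in B$ and the $\eta_j(D_j)$ generate $\CE(D_1,\dots,D_n)$, the restriction $\Phi|_{\CE}$ has range in $B$. As you already cite the same source at the end, there is nothing further to compare.
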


\begin{nota} \label{nota:multi-cones-actions}
Let $G$ be a second-countable, locally compact group.
Let $D_1,\dots,D_n$ be unital \cstar-algebras with continuous actions $\alpha^{(j)}: G\curvearrowright D_j$ for $j=1,\dots,n$.
Then the $G$-action on $\CC\Big( \Delta^{(n)}, D_1\otimes_{\max}\dots\otimes_{\max} D_n \Big)$ defined fibrewise by $\alpha^{(1)}\otimes_{\max}\dots\otimes_{\max}\alpha^{(n)}$ restricts to a well-defined action
\[
\CE(\alpha^{(1)},\dots,\alpha^{(n)}): G\curvearrowright \CE(D_1,\dots,D_n)
\]
We will again denote $\CE(\alpha,n):=\CE(\alpha^{(1)},\dots,\alpha^{(n)})$ in the special case that all $(D_j,\alpha^{(j)})=(D,\alpha)$ are the same \cstar-dynamical system.
\end{nota}

\begin{rem} \label{rem:multi-cones-equi-property}
By the universal property in \autoref{prop:multi-cones-property}, the $G$-action $\CE(\alpha^{(1)},\dots,\alpha^{(n)})$ defined in \autoref{nota:multi-cones-actions} is uniquely determined by the identity $\CE(\alpha^{(1)},\dots,\alpha^{(n)})_g\circ\eta_j = \eta_j\circ\alpha^{(j)}_g$ for all $j=1,\dots,n$ and $g\in G$.

This immediately allows us obtain the following equivariant version of \autoref{prop:multi-cones-property} as a consequence:

Let $B$ be any unital \cstar-algebra with an action $\beta: G\curvearrowright B$.
If $\psi_j: (D_j,\alpha^{(j)})\to (B,\beta)$ are equivariant c.p.c.\ order zero maps with pairwise commuting ranges and $\psi_1(\eins_{D_1})+\dots+\psi_n(\eins_{D_n})=\eins_B$, then there exists a unique unital equivariant $*$-homomorphism
\[
\Psi: \Big( \CE(D_1,\dots,D_n) ,\CE(\alpha^{(1)},\dots,\alpha^{(n)}) \Big) \to (B,\beta)
\]
satisfying $\Psi\circ\eta_j = \psi_j$ for all $j=1,\dots,n$.
\end{rem}

\begin{rem} \label{rem:multi-cones-join}
Let us now also convince ourselves of a different natural way to view the \cstar-algebras from \autoref{nota:multi-cones}.

For this, let us first consider the case $n=2$, so we have two unital \cstar-algebras $D_1$ and $D_2$.
Notice that $[0,1]$ is naturally homeomorphic to the simplex $\Delta^{(2)}=\set{ (t_1,t_2) \in [0,1]^{2} \mid t_1+t_2=1 }$ via the assignment $t\mapsto (t,t-1)$.
In this way we may see that there is a natural isomorphism
\[
\begin{array}{cl}
\multicolumn{2}{l}{ \CE(D_1,D_2) }\\
\stackrel{\mathrm{def}}{=} & \set{ f\in\CC(\Delta^{(2)}, D_1\otimes_{\max} D_2) \mid f(0,1) \in D_1\otimes\eins,\ f(1,0)\in\eins\otimes D_2 } \\
\cong& \set{ f\in \CC([0,1],D_1\otimes_{\max} D_2) \mid f(0)\in D_1\otimes\eins,\ f(1)\in\eins\otimes D_2 } \\
=: & D_1\star D_2.
\end{array}
\]
In particular, we see that the notation $\CE(D_1,D_2)$ is consistent with \cite[Definition 5.1]{Szabo18ssa2}. 
As pointed out in \cite[Remark 5.2]{Szabo18ssa2}, the assignment $(D_1, D_2)\mapsto \CE(D_1,D_2)$ on pairs of unital \cstar-algebras therefore generalizes the join construction for pairs of compact spaces, which gives rise to the notation $D_1\star D_2$.

Let now $n\geq 2$ and let $D_1,\dots, D_{n+1}$ be unital \cstar-algebras.
The simplex $\Delta^{(n+1)}$ is homeomorphic to $[0,1]\times\Delta^{(n)}$ via the assignment
\[
(t_1,\vec{t}) \mapsto \begin{cases} (1,\vec{t}) &,\quad t_{1}=0 \\ \big( 1-t_1, \frac{\vec{t}}{1-t_1} \big) &, \quad t_1\neq 0 \end{cases}
\]
for $(\vec{t},t_{n+1})\in\Delta^{(n+1)}$.
Keeping this in mind, we see that there is a natural map
\[
\Phi: D_1\star\CE(D_2,\dots,D_{n+1}) \to \CE(D_1,\dots,D_{n+1})
\]
given by\footnote{The reader should keep in mind that an element $f$ in the domain is a continuous function on $[0,1]$ whose values are in turn (certain) continuous functions from $\Delta^{(n)}$ to the tensor product $D_1\otimes_{\max}\dots\otimes_{\max} D_{n+1}$.}  
\[
\Phi(f)(t_1,\vec{t}) = \begin{cases} f(1)(\vec{t}) &,\quad t_1=0 \\ f(1-t_1)\big( \frac{\vec{t}}{1-t_1} \big) &, \quad t_1\neq 0\end{cases}
\]
for $(t_1,\vec{t})\in\Delta^{(n+1)}$.
It is a simple exercise to see that this is a well-defined isomorphism.
This shows that it makes sense to view the \cstar-algebra $\CE(D_1,\dots,D_n)$ as the $n$-fold join $D_1\star\dots\star D_n$.
We can also observe that this isomorphism is natural in each \cstar-algebra, and therefore becomes equivariant as soon as we equip each \cstar-algebra $D_j$ with an action $\alpha^{(j)}$ of some group $G$.

Henceforth, we will in particular denote
\[
D^{\star n}:=\CE(D,n) \quad\text{and}\quad \alpha^{\star n}:=\CE(\alpha,n)
\]
for a unital \cstar-algebra $D$ and some group action $\alpha: G\curvearrowright D$.
\end{rem}

\begin{rem} \label{rem:multi-cones-extension}
By the definition of the join of two \cstar-algebras $D_1$ and $D_2$, there is a natural short exact sequence
\[
\xymatrix{
0 \ar[r] & \CC_0(0,1)\otimes D_1\otimes_{\max} D_2 \ar[r] & D_1\star D_2 \ar[r] & D_1\oplus D_2 \ar[r] & 0.
}
\]
Given some $n\geq 1$ and a unital \cstar-algebra $D$, we have $D^{\star n+1} \cong D\star(D^{\star n})$, and therefore a special case of the above yields the short exact sequence
\[
\xymatrix{
0 \ar[r] & \CC_0(0,1)\otimes D\otimes_{\max} D^{\star n} \ar[r] & D^{\star n+1} \ar[r] & D\oplus D^{\star n} \ar[r] & 0.
}
\]
Again by naturality, we note that this short exact sequence is automatically equivariant if we additionally equip $D$ with a group action. 
\end{rem}

We now come to the main observation about \cstar-dynamical systems arising in this fashion, which will be crucial in proving our main result:

\begin{lemma} \label{lem:multi-cones-absorption}
Let $G$ be a second-countable, locally compact group.
Let $A$ be a separable, unital \cstar-algebra with an action $\alpha: G\curvearrowright A$.
Suppose that $\gamma: G\curvearrowright\CD$ is a semi-strongly self-absorbing and unitarily regular action.
If $\alpha$ is $\gamma$-absorbing, then so is the action $\alpha^{\star n}: G\curvearrowright A^{\star n}$ for all $n\geq 2$.
\end{lemma}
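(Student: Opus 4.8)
The plan is to use the equivariant McDuff-type characterization of $\gamma$-absorption provided by \autoref{thm:equi-McDuff}, specifically the equivalence \ref{equi-McDuff1}$\Leftrightarrow$\ref{equi-McDuff3}: since $\gamma$ is semi-strongly self-absorbing and unitarily regular, $\alpha^{\star n}$ is $\gamma$-absorbing as soon as we can exhibit a unital equivariant $*$-homomorphism $(\CD,\gamma)\to (F_{\infty,\alpha^{\star n}}(A^{\star n}),\tilde{\alpha}^{\star n}_\infty)$. So the whole problem reduces to constructing such a map. Since $\alpha$ is itself $\gamma$-absorbing, we already have a unital equivariant $*$-homomorphism $\varphi\colon(\CD,\gamma)\to(F_{\infty,\alpha}(A),\tilde\alpha_\infty)$ to start from, and the task is to transport it through the join construction.

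First I would reduce to $n=2$ and then induct, using the equivariant iterated-join identity $A^{\star n+1}\cong A\star(A^{\star n})$ from \autoref{rem:multi-cones-join}/\autoref{rem:multi-cones-extension}; concretely, if the statement holds for $A^{\star n}$, apply the $n=2$ case to the pair $(A, A^{\star n})$ — but note that the $n=2$ case as I want to use it should really be the two-sided join statement ``if $\alpha$ and $\beta$ are both $\gamma$-absorbing then $\alpha\star\beta$ is'', so it is cleanest to prove that mildly more general two-variable version first and then specialize. For the join $A_1\star A_2=\CE(A_1,A_2)$ one has the equivariant short exact sequence
\[
0\to\CC_0(0,1)\otimes A_1\otimes_{\max}A_2\to A_1\star A_2\to A_1\oplus A_2\to 0
\]
from \autoref{rem:multi-cones-extension}. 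The quotient action is $\alpha_1\oplus\alpha_2$, which is $\gamma$-absorbing because each summand is (and $\gamma$-absorption passes to finite direct sums — this is elementary from the $F_{\infty}$ picture). The ideal carries the action $\mathrm{id}_{\CC_0(0,1)}\otimes(\alpha_1\otimes_{\max}\gamma\otimes\alpha_2)$-type tensor, and since $\alpha_1$ is $\gamma$-absorbing and $\gamma$ is self-absorbing, $\alpha_1\otimes_{\max}\alpha_2$ is again $\gamma$-absorbing; tensoring with the non-equivariant $\CC_0(0,1)$ preserves this (the suspension of a $\gamma$-absorbing action is $\gamma$-absorbing, again visible at the level of central sequence algebras). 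Now \autoref{thm:ssa-ext}: because $\gamma$ is unitarily regular, the class of separable $\gamma$-absorbing $G$-\cstar-dynamical systems is closed under equivariant extensions, so $\alpha_1\star\alpha_2$ is $\gamma$-absorbing. This is the heart of the argument.

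The main obstacle I anticipate is bookkeeping rather than conceptual: one must verify carefully that the ideal and quotient actions in the short exact sequence genuinely are of the asserted tensorial form (so that the known absorption results — $\gamma$-absorption is preserved under $\otimes_{\max}$ with a $\gamma$-absorbing factor, under suspension, and under finite direct sums — actually apply), and that all the algebras in sight are separable so that \autoref{thm:ssa-ext} is applicable. An alternative, more hands-on route that avoids invoking \autoref{thm:ssa-ext} would be to build the equivariant unital $*$-homomorphism $(\CD,\gamma)\to F_{\infty,\alpha^{\star n}}(A^{\star n})$ directly: using the universal property of $\CE$ (\autoref{rem:multi-cones-equi-property}) and the order-zero maps $\eta_j$, one can push the given map $\varphi\colon(\CD,\gamma)\to F_{\infty,\alpha}(A)$ into each ``coordinate'' and check the pieces have commuting ranges summing to the unit in the appropriate relative central sequence algebra; but the extension argument via \autoref{thm:ssa-ext} is shorter and I would present that, relegating the direct construction to a remark if needed.
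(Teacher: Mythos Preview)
Your proposal is correct and follows essentially the same route as the paper: the paper's proof is the single sentence ``This follows directly from \autoref{rem:multi-cones-extension} and \autoref{thm:ssa-ext} by induction,'' and your argument simply unpacks this, verifying that the ideal and quotient in the equivariant extension are each $\gamma$-absorbing so that \autoref{thm:ssa-ext} applies. Your slight generalization to the two-variable join $\alpha_1\star\alpha_2$ is exactly what the paper notes in the remark immediately following the lemma, so there is no substantive divergence.
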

\begin{proof}
This follows directly from \autoref{rem:multi-cones-extension} and \autoref{thm:ssa-ext} by induction.
\end{proof}

\begin{rem}
It ought to be mentioned that \autoref{lem:multi-cones-absorption} does not depend in any way on the fact that one considers the $n$-fold join over the same \cstar-algebra and the same action.
The analogous statement is valid for more general joins of the form 
\[
\alpha^{(1)}\star\dots\star\alpha^{(n)}: G\curvearrowright A_1\star\dots\star A_n
\] 
by virtually the same argument.

In fact, by putting in a bit more work, one could likely prove an equivariant version of \cite[Theorem 4.6]{HirshbergRordamWinter07} for $\CC_0(X)$-$G$-\cstar-algebras with $\dim(X)<\infty$ whose fibres absorb a given semi-strongly self-absorbing and unitarily regular action.
This would contain \autoref{lem:multi-cones-absorption} as a special case since the $G$-\cstar-algebra $A_1\star\dots\star A_n$ is in fact a $\CC(\Delta^{(n)})$-$G$-\cstar-algebra with each fibre being isomorphic to some finite tensor product of the $A_j$.
We will never need this level of generality within this paper, however.
\end{rem}


\section{Rokhlin dimension with commuting towers}

The following notion generalizes analogous definitions made in \cite{HirshbergWinterZacharias15, SzaboWuZacharias17, Gardella17, HirshbergSzaboWinterWu17}.

\begin{defi}[cf.\ {\cite[Definition 4.1]{HirshbergSzaboWinterWu17}}] \label{def:dimrokc}
Let $G$ be a second-countable, locally compact group. 
Let $\alpha: G\curvearrowright A$ be an action on a separable \cstar-algebra. 
\begin{enumerate}[label=\textup{(\roman*)},leftmargin=*]
\item Let $H\subset G$ be a closed, cocompact subgroup. 
The Rokhlin dimension of $\alpha$ with commuting towers relative to $H$, denoted $\dimrokc(\alpha, H)$, is the smallest natural number $d$ such that there exist equivariant c.p.c.\ order zero maps
\[
\phi^{(0)},\dots,\phi^{(d)}: (\CC(G/H), G\text{-shift})\to \big( F_{\infty,\alpha}(A), \tilde{\alpha}_\infty \big)
\]
with pairwise commuting ranges such that $\eins=\phi^{(0)}(\eins)+\dots+\phi^{(d)}(\eins)$.
\item If $\CS=(G_k)_k$ denotes a decreasing sequence of closed, cocompact subgroups, then we define the Rokhlin dimension of $\alpha$ with commuting towers relative to $\CS$ via
\[
\dimrokc(\alpha,\CS) = \sup_{k\in\IN}~\dimrokc(\alpha, G_k).
\]
\item Let $N\subset G$ be any closed, normal subgroup.
The Rokhlin dimension of $\alpha$ with commuting towers relative to $N$ is defined as
\[
\dimrokc(\alpha,N) := \sup\set{ \dimrokc(\alpha, H) \mid H\subseteq G \text{ closed, cocompact, } N\subseteq H }.
\]
\item Lastly, the Rokhlin dimension of $\alpha$ with commuting towers is defined as
\[
\dimrokc(\alpha) := \dimrokc(\alpha, \set{1}) = \sup\set{ \dimrokc(\alpha, H) \mid H\subseteq G \text{ closed, cocompact} }.
\]
\end{enumerate}
\end{defi}

We note that, even though the second half of \autoref{def:dimrokc} always makes sense, these concepts are not expected to be of any practical use when $G$ (or $G/N$) is not assumed to have enough closed cocompact subgroups, or to admit at least some residually compact approximation.

\begin{nota} \label{nota:procp-completion}
Let $G$ be a second-countable, locally compact group.
Given a decreasing sequence $\CS=(G_k)_k$ of closed, cocompact subgroups, we will denote
\[
G/\CS = \lim_{\longleftarrow} G/G_k.
\]
This is a metrizable, compact space\footnote{This construction generalizes the profinite completion of a discrete residually finite group along a chosen separating sequence of normal subgroups of finite index.}, which carries a natural continuous $G$-action induced by the left $G$-shift on each building block $G/G_k$; in particular we will call the resulting action also just the $G$-shift and denote it by 
\[
\sigma^\CS: G\curvearrowright G/\CS.
\]
In the sequel, we will adopt the perspective of the associated $G$-\cstar-dynamical system, which is given as the equivariant inductive limit
\[
\CC(G/\CS) = \lim_{\longrightarrow} \CC(G/G_k).
\]
We will moreover consider $\CC(G/S)^{\star n}$ for $n\geq 2$.
With some abuse of terminology, we will use the term ``$G$-shift'' also to refer to the canonical action on this \cstar-algebra (or the underlying space) that is induced by the $n$-fold tensor products of the $G$-shift on each fibre.
\end{nota}

\begin{lemma} \label{lem:dimrokc-eq}
Let $G$ be a second-countable, locally compact group. 
Let $\alpha: G\curvearrowright A$ be an action on a separable \cstar-algebra. 
Let $\CS=(G_k)_k$ be a decreasing sequence of closed, cocompact subgroups.
Let $d\geq 0$ be some natural number.
Then the following are equivalent:
\begin{enumerate}[label=\textup{(\roman*)},leftmargin=*]
\item $\dimrokc(\alpha,\CS)\leq d$; \label{lem:dimrokc-eq:1}
\item there exist equivariant c.p.c.\ order zero maps
\[
\phi^{(0)},\dots,\phi^{(d)}: (\CC(G/\CS), G\textup{-shift})\to \big( F_{\infty,\alpha}(A), \tilde{\alpha}_\infty \big)
\]
with pairwise commuting ranges such that $\eins=\phi^{(0)}(\eins)+\dots+\phi^{(d)}(\eins)$;\label{lem:dimrokc-eq:2}
\item there exists a unital $G$-equivariant $*$-homomorphism 
\[
\big( \CC(G/\CS)^{\star (d+1)}, G\textup{-shift} \big) \to \big( F_{\infty,\alpha}(A), \tilde{\alpha}_\infty \big);
\] \label{lem:dimrokc-eq:3}
\item the first-factor embedding
\[
\id_A\otimes\eins: (A,\alpha) \to \big( A\otimes\CC(G/\CS)^{\star (d+1)}, \alpha\otimes (G\textup{-shift}) \big)
\] 
is $G$-equivariantly sequentially split.
\label{lem:dimrokc-eq:4}
\end{enumerate}
\end{lemma}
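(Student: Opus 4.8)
The plan is to prove (ii)~$\Leftrightarrow$~(iii), (iii)~$\Leftrightarrow$~(iv) and (i)~$\Leftrightarrow$~(ii) in turn; the first two are essentially formal, and the last carries the only real content. For (ii)~$\Leftrightarrow$~(iii) I would simply invoke the equivariant universal property of the join. Since $\CC(G/\CS)^{\star(d+1)}=\CE\big(\CC(G/\CS),\dots,\CC(G/\CS)\big)$, \autoref{prop:multi-cones-property} together with \autoref{rem:multi-cones-equi-property} establishes a bijection between unital $G$-equivariant $*$-homomorphisms $\CC(G/\CS)^{\star(d+1)}\to\big(F_{\infty,\alpha}(A),\tilde\alpha_\infty\big)$ on the one hand, and $(d+1)$-tuples of equivariant c.p.c.\ order zero maps $\CC(G/\CS)\to F_{\infty,\alpha}(A)$ with pairwise commuting ranges whose values at $\eins$ sum to $\eins$ on the other; the bijection sends $\Psi$ to $\big(\Psi\circ\eta_1,\dots,\Psi\circ\eta_{d+1}\big)$. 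This is exactly the equivalence of (ii) and (iii).

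For (iii)~$\Leftrightarrow$~(iv) I would invoke the (by now standard) equivalence, valid for any separable unital $G$-\cstar-algebra $(D,\delta)$, between equivariant sequential splitness of the first-factor embedding $\id_A\otimes\eins\colon(A,\alpha)\to(A\otimes D,\alpha\otimes\delta)$ and the existence of a unital equivariant $*$-homomorphism $(D,\delta)\to\big(F_{\infty,\alpha}(A),\tilde\alpha_\infty\big)$; this can be extracted from \cite[Section 4]{BarlakSzabo16ss} and is exactly the mechanism behind \autoref{thm:equi-McDuff}, with the self-absorption hypothesis playing no role in this particular step. One direction is a restriction-and-quotient argument, the other a lifting-and-diagonalization argument. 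Applying it with $(D,\delta)=\big(\CC(G/\CS)^{\star(d+1)},G\text{-shift}\big)$, which is separable and unital, yields (iii)~$\Leftrightarrow$~(iv).

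It remains to handle (i)~$\Leftrightarrow$~(ii). The implication (ii)~$\Rightarrow$~(i) is immediate: for each $k$ the canonical surjection $G/\CS\to G/G_k$ induces a unital equivariant inclusion $\CC(G/G_k)\hookrightarrow\CC(G/\CS)$, and composing the maps $\phi^{(l)}$ from (ii) with it witnesses $\dimrokc(\alpha,G_k)\leq d$ for every $k$, hence $\dimrokc(\alpha,\CS)\leq d$. For the converse (i)~$\Rightarrow$~(ii) I would first note that the $(d+1)$-fold join commutes with equivariant inductive limits --- it is assembled out of maximal tensor products (over the nuclear algebra $\CC(G/\CS)$, where $\min=\max$) and the functor $\CC\big(\Delta^{(d+1)},-\big)$, both of which respect inductive limits, while the fibrewise constraints pass to the limit --- so that $\CC(G/\CS)^{\star(d+1)}=\lim_{\longrightarrow}\CC(G/G_k)^{\star(d+1)}$. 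By hypothesis $\dimrokc(\alpha,G_k)\leq d$ for every $k$, which through \autoref{rem:multi-cones-equi-property} provides a unital equivariant $*$-homomorphism $\Psi_k\colon\CC(G/G_k)^{\star(d+1)}\to F_{\infty,\alpha}(A)$. These maps need not be compatible with the connecting maps of the system, so the point is to splice them into a single unital equivariant $*$-homomorphism on the inductive limit. For this I would run a reindexation argument: exploiting that $F_{\infty,\alpha}(A)$ is (a quotient of a relative commutant inside) a sequence algebra, hence stable under passing to diagonal sequences, together with separability of $\CC(G/\CS)^{\star(d+1)}$, one works over exhausting sequences of finite subsets of $\CC(G/\CS)^{\star(d+1)}$, of compact subsets of $G$, and of shrinking tolerances, at each stage using $\Psi_k$ for $k$ large enough that the relevant finite set lies approximately in the image of $\CC(G/G_k)^{\star(d+1)}$, and extracts a suitable diagonal. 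This yields the map required by (iii), hence (ii). I expect this reindexation step --- and, in the same spirit, the ``if'' half of (iii)~$\Leftrightarrow$~(iv) --- to be the only genuine obstacle; both are nonetheless routine by the standards of this subject, while all the rest amounts to formal bookkeeping with the universal property of \autoref{prop:multi-cones-property}.
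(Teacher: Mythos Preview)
Your proposal is correct and matches the paper's approach closely: the paper likewise derives (ii)$\Leftrightarrow$(iii) from \autoref{prop:multi-cones-property} and \autoref{rem:multi-cones-join}, derives (iii)$\Leftrightarrow$(iv) from \cite[Lemma 4.2]{BarlakSzabo16ss}, and treats (i)$\Leftrightarrow$(ii) as a standard reindexation (Kirchberg's $\eps$-test) using the inductive limit structure, leaving the details to the reader. The only cosmetic difference is that you route (i)$\Rightarrow$(ii) through the inductive limit structure of the join $\CC(G/\CS)^{\star(d+1)}$ to land in (iii), whereas the paper phrases the reindexation directly at the level of the $(d+1)$-tuples of order zero maps using the inductive limit structure of $\CC(G/\CS)$ itself; these are equivalent bookkeeping choices.
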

\begin{proof}
The equivalence \ref{lem:dimrokc-eq:1}$\Leftrightarrow$\ref{lem:dimrokc-eq:2} follows from a standard reindexing trick such as Kirchberg's $\eps$-test \cite[Lemma A.1]{Kirchberg04}, using the equivariant inductive limit structure of $\CC(G/\CS)$ as pointed out in \autoref{nota:procp-completion}.
We will leave the details to the reader.

The equivalence \ref{lem:dimrokc-eq:2}$\Leftrightarrow$\ref{lem:dimrokc-eq:3} is a direct consequence of \autoref{prop:multi-cones-property} and \autoref{rem:multi-cones-join}, and the equivalence \ref{lem:dimrokc-eq:3}$\Leftrightarrow$\ref{lem:dimrokc-eq:4} is a direct consequence of \cite[Lemma 4.2]{BarlakSzabo16ss}.
\end{proof}

The purpose of this section is to prove the following theorem, which can be regarded as the main result of the paper.
Some of its non-trivial applications will be discussed in the subsequent sections.
See in particular \autoref{cor:dimrokc-cor} for a possibly more accessible special case of this theorem.

\begin{theorem} \label{thm:dimrokc-main}
Let $G$ be a second-countable, locally compact group and $N\subset G$ a closed, normal subgroup. 
Denote by $\pi_N: G\to G/N$ the quotient map. 
Let $\CS_1=(H_k)_k$ be a residually compact approximation of $G/N$, and set $G_k=\pi_N^{-1}(H_k)$ for all $k\in\IN$ and $\CS_0=(G_k)_k$.
Let $A$ be a separable \cstar-algebra and $\CD$ a strongly self-absorbing \cstar-algebra. 
Let $\alpha: G\curvearrowright A$ be an action and $\gamma: G\curvearrowright\CD$ a semi-strongly self-absorbing, unitarily regular action. 
Suppose that for the restrictions to the $N$-actions, we have $\alpha|_N \cc (\alpha\otimes\gamma)|_N$. 
If
\[
\asdim(\square_{\CS_1} H_1)<\infty\quad\text{and}\quad  \dimrokc(\alpha, \CS_0)<\infty,
\]
then $\alpha\cc\alpha\otimes\gamma$.
\end{theorem}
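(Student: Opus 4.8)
The plan is to reduce the statement, via \autoref{thm:equi-McDuff}, to the construction of a single equivariant embedding into $F_{\infty,\alpha}(A)$, and then to assemble that embedding from the Rokhlin towers of \autoref{lem:dimrokc-eq}, the model embedding for the $N$-action, and a partition-of-unity gluing over $G/N$ furnished by \autoref{lem:decay}. First note that $G/G_k=(G/N)/H_k$ is compact for each $k$, so $\CS_0$ is a decreasing sequence of closed cocompact subgroups of $G$, $\CC(G/\CS_0)\cong\CC((G/N)/\CS_1)$ as $G$-C*-algebras with the $G$-shift factoring through $\pi_N$, and $N$ acts trivially on $\CC(G/\CS_0)$. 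Fix a common finite upper bound $d$ for $\dimrokc(\alpha,\CS_0)$ and $\asdim(\square_{\CS_1}H_1)$. By \autoref{thm:equi-McDuff}, $\alpha\cc\alpha\otimes\gamma$ is equivalent to the existence of a unital $G$-equivariant $*$-homomorphism $(\CD,\gamma)\to(F_{\infty,\alpha}(A),\tilde\alpha_\infty)$. Since $\gamma$ is semi-strongly self-absorbing it is $\gamma$-absorbing, so $\gamma^{\star(d+1)}$ is $\gamma$-absorbing by \autoref{lem:multi-cones-absorption}, and \autoref{thm:equi-McDuff} then yields a unital equivariant $*$-homomorphism $\CD\to F_{\infty,\gamma^{\star(d+1)}}(\CD^{\star(d+1)})$. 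As a unital equivariant $*$-homomorphism from a separable $G$-C*-algebra into $F_{\infty,\alpha}(A)$ induces such a map out of that algebra's own equivariant central sequence algebra, it therefore suffices to produce a unital $G$-equivariant $*$-homomorphism
\[
\Phi\colon\big(\CD^{\star(d+1)},\gamma^{\star(d+1)}\big)\longrightarrow\big(F_{\infty,\alpha}(A),\tilde\alpha_\infty\big);
\]
by \autoref{prop:multi-cones-property} and \autoref{rem:multi-cones-equi-property} this is the same as producing equivariant c.p.c.\ order zero maps $\Theta^{(0)},\dots,\Theta^{(d)}\colon(\CD,\gamma)\to(F_{\infty,\alpha}(A),\tilde\alpha_\infty)$ with pairwise commuting ranges and $\Theta^{(0)}(\eins)+\dots+\Theta^{(d)}(\eins)=\eins$.

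For the raw material, \autoref{lem:dimrokc-eq} applied to $\dimrokc(\alpha,\CS_0)\le d$ gives equivariant c.p.c.\ order zero maps $\phi^{(0)},\dots,\phi^{(d)}\colon(\CC((G/N)/\CS_1),G\text{-shift})\to(F_{\infty,\alpha}(A),\tilde\alpha_\infty)$ (padding with zero maps if needed) with pairwise commuting ranges and $\sum_l\phi^{(l)}(\eins)=\eins$; as $N$ acts trivially on the domain, their ranges lie in the fixed-point algebra of $\tilde\alpha_\infty|_N$. From $\alpha|_N\cc(\alpha\otimes\gamma)|_N$, \autoref{thm:equi-McDuff} applied to $N$, and $\gamma^{\otimes(d+1)}\cc\gamma$ (semi-strong self-absorption), one obtains $d+1$ pairwise commuting unital $N$-equivariant $*$-homomorphisms $\psi^{(0)},\dots,\psi^{(d)}\colon(\CD,\gamma|_N)\to(F_{\infty,\alpha|_N}(A),\tilde\alpha_\infty|_N)$; since $\bigcup_l\phi^{(l)}(\CC((G/N)/\CS_1))$ is fixed by $\tilde\alpha_\infty|_N$, a relative-commutant argument (again using that $\CD$ is strongly self-absorbing) lets one arrange, in addition, that the ranges of all $\psi^{(l)}$ commute with it.

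Building the $\Theta^{(l)}$ out of the $\phi^{(l)}$ and the $\psi^{(l)}$ is the heart of the proof, and I would do it by a reindexing ($\eps$-test) argument. Given $\eps>0$ and compact $K\subseteq G$, \autoref{lem:decay} applied to $G/N$ with $\CS_1$ (valid since $\asdim(\square_{\CS_1}H_1)<\infty$) yields some $n$ and continuous compactly supported $\mu^{(0)},\dots,\mu^{(d)}\colon G/N\to[0,1]$ (padding with zero functions if needed) whose supports are disjoint from their nontrivial $H_n$-translates, with $\sum_l\sum_{h\in H_n}\mu^{(l)}(gh)=1$ and $\|\mu^{(l)}(g\cdot\_)-\mu^{(l)}\|_\infty\le\eps$ for $g\in\pi_N(K)$; by the disjointness each $\mu^{(l)}$ descends to $\bar\mu^{(l)}\in\CC((G/N)/H_n)$ with thin support, $\sum_l\bar\mu^{(l)}=1$, and $\eps$-invariance under the $K$-part of the $G$-shift. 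Over the thin support of $\bar\mu^{(l)}$ the $(G/N)$-translates provide pairwise disjoint charts, over each of which one places a suitably twisted copy of $\psi^{(l)}$ — consistently, since the twisting only permutes charts (a $G/N$-effect) while $\psi^{(l)}$ absorbs the remaining $N$-dynamics, $N$ acting trivially on $(G/N)/H_n$. Pushing the resulting approximately $H_n$-periodic data through \autoref{cor:cpc-perturb} and then through $\phi^{(l)}$ produces c.p.c.\ order zero maps $\Theta^{(l)}\colon\CD\to F_{\infty,\alpha}(A)$, with range in the hereditary subalgebra generated by $\phi^{(l)}(\bar\mu^{(l)})$, which satisfy up to $\eps$-errors: $\sum_l\Theta^{(l)}(\eins)=\eins$; pairwise commuting ranges; and $G$-equivariance on $K$ — its $G/N$-part from equivariance of $\phi^{(l)}$ and approximate $K$-invariance of $\bar\mu^{(l)}$, its $N$-part from $\psi^{(l)}$, which also forces the output into $F_{\infty,\alpha}(A)$ rather than just $F_{\infty,\alpha|_N}(A)$. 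An exhaustion by pairs $(\eps,K)$ together with Kirchberg's $\eps$-test then upgrade this to honest equivariant c.p.c.\ order zero maps with pairwise commuting ranges and $\sum_l\Theta^{(l)}(\eins)=\eins$, which by the first paragraph finishes the proof.

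The hard part will be precisely this welding in the third paragraph: the towers $\phi^{(l)}$ are fully $G$-equivariant but register $G$ only through $G/N$, whereas the model data $\psi^{(l)}$ carry the genuine $N$-dynamics but are a priori not $G$-equivariant, and they must be combined without spoiling the order-zero structure, the commuting ranges, or the partition-of-unity relation — and land in the continuous part. This is exactly what makes the partition of unity of \autoref{lem:decay}, hence the finiteness of $\asdim(\square_{\CS_1}H_1)$, indispensable: one needs to localise over $(G/N)/H_n$ finely enough that the non-$G$-equivariant piece can be spread over disjoint charts with equivariance defect small enough for the $\eps$-test to absorb. The preliminary placement of the $N$-equivariant model data as commuting copies inside the relative commutant of the Rokhlin towers is another point that will need care.
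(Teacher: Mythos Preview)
Your overall strategy---reduce to producing a unital equivariant map $\CD^{\star(d+1)}\to F_{\infty,\alpha}(A)$ via \autoref{lem:multi-cones-absorption}, and then build such a map by gluing twisted copies of the $N$-equivariant model data over a partition of unity from \autoref{lem:decay}---has the right ingredients, but the assembly step has a genuine gap. The partition functions $\mu^{(l)}$ live on $G/N$, and the twisted copies $\alpha_{g}\circ\psi^{(l)}\circ\gamma_{g}^{-1}$ are naturally parameterised by $g\in G$; after applying \autoref{cor:cpc-perturb} this data lands in $\CC(G/G_n,A)$ (or $\CC(G/G_n,A_\infty)$). But your Rokhlin towers $\phi^{(l)}$ are merely c.p.c.\ order zero maps $\CC(G/G_n)\to F_{\infty,\alpha}(A)$; they do not furnish a map out of $\CC(G/G_n,A)$ or $\CC(G/G_n)\otimes A_\infty$ unless each is a unital $*$-homomorphism. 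So the phrase ``push through $\phi^{(l)}$'' is not well-defined as stated. Coupling the index $l$ of the Rokhlin tower to the index $l$ of the partition function does not help: these colours play completely unrelated roles, and pairing them does not produce the multiplicative structure you need. (A related issue: the twists $\tilde\alpha_{\infty,g}\circ\psi^{(l)}\circ\gamma_g^{-1}$ for $g\notin N$ need not land in the $\alpha$-continuous part of the central sequence algebra, so even forming them inside $F_{\infty,\alpha}(A)$ is problematic.)

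The paper circumvents exactly this by decoupling the two colours. It first proves the Rokhlin-dimension-zero case: for the action $\sigma^{\CS_0}\otimes\alpha$ on $\CC(G/\CS_0)\otimes A$, one carries out the entire twisted-partition-of-unity construction with honest lifts $\kappa_n^{(l)}\colon\CD\to A$ (so the twists $\alpha_g\circ\kappa_n^{(l)}\circ\gamma_g^{-1}$ are unambiguous), landing in $\CC(G/G_k,A)$ after \autoref{cor:cpc-perturb}, and concludes that $\sigma^{\CS_0}\otimes\alpha$ is $\gamma$-absorbing. This is \autoref{lem:main-technical}; here only the box-space colour appears. The Rokhlin-dimension colour is then handled separately and abstractly: by \autoref{lem:dimrokc-eq}\ref{lem:dimrokc-eq:4} the first-factor embedding $A\hookrightarrow A\otimes\CC(G/\CS_0)^{\star(s+1)}$ is sequentially split, and an induction on $s$ using \autoref{rem:multi-cones-extension} together with the extension-closure \autoref{thm:ssa-ext} (this is where unitary regularity enters) shows $A\otimes\CC(G/\CS_0)^{\star(s+1)}$ is $\gamma$-absorbing. \autoref{lem:reindexation}\ref{lem:reindexation:1} then finishes. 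So rather than trying to thread the Rokhlin towers and the partition of unity together inside $F_{\infty,\alpha}(A)$, the paper performs the delicate analytic work once, in the universal model $\CC(G/\CS_0)\otimes A$, and invokes the Rokhlin towers only through the soft fact that the resulting embedding is sequentially split.
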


We note that \autoref{Thm-A} is a direct consequence of this result.
The hypothesis that $G/N$ has some discrete, normal, residually finite, cocompact subgroup admitting a box space with finite asymptotic dimension means that there is choice for $\CS_1$ as required by the above statement.
The hypothesis that $\alpha$ has finite Rokhlin dimension with commuting towers means that the value $\dimrok^c(\alpha,\CS_0)$ has a finite uniform upper bound, for any possible choice of $\CS_1$.
Hence the statement of \autoref{Thm-A} follows.

The proof of \autoref{thm:dimrokc-main} will occupy the rest of this section.
The first and most difficult step is to convince ourselves of a very special case of \autoref{thm:dimrokc-main}, which involves the technical preparation below and from Section \ref{sec:box}. 

For convenience, we isolate the following lemma, which is a consequence of \autoref{prop:cont-ell}, the Winter--Zacharias structure theorem for order zero maps, along with the Choi--Effros lifting theorem; see \cite[Section 3]{WinterZacharias09} and \cite{ChoiEffros76}.

\begin{lemma} \label{F(A)-lift}
Let $G$ be a second-countable, locally compact group. Let $A$ be a separable \cstar-algebra and $B$ a separable, unital and nuclear \cstar-algebra. 
Let $\alpha: G\curvearrowright A$ and $\beta: G\curvearrowright B$ be two actions. 
Let $\kappa: (B,\beta)\to \big( A_{\infty,\alpha}, \alpha_\infty \big)$ be an equivariant c.p.c.\ order zero map. 
Then $\kappa$ can be represented by a sequence of c.p.c.\ maps $\kappa_n: B\to A$ satisfying:
\begin{enumerate}[label=\textup{(\alph*)},leftmargin=*]
\item $\| \kappa_n(xy)\kappa(\eins)-\kappa_n(x)\kappa_n(y) \|\to 0$
\item $\dst\max_{g\in K}~\| (\kappa_n\circ\gamma_g)(x) - (\alpha_g\circ\kappa_n)(x)\|\to 0$
\end{enumerate}
for all $x,y\in B$ and compact subsets $K\subset G$.
\end{lemma}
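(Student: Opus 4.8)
The statement combines three standard tools: the Choi--Effros lifting theorem (to obtain \emph{some} c.p.c.\ lift), the Winter--Zacharias structure theory of c.p.c.\ order zero maps (to upgrade the lift so that the order zero multiplicativity relation holds asymptotically), and \autoref{prop:cont-ell} together with a reindexing/$\eps$-test argument (to arrange equivariance asymptotically on compact sets). The plan is to produce the lift in two stages, first fixing multiplicativity and then fixing equivariance, and finally to merge the two approximations into a single sequence via Kirchberg's $\eps$-test.

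First I would invoke the structure theorem for order zero maps: since $\kappa\colon B\to A_{\infty,\alpha}$ is c.p.c.\ order zero, there is a $*$-homomorphism $\rho\colon \mathrm{C}_0(0,1]\otimes B\to A_{\infty,\alpha}$ with $\kappa(\cdot)=\rho(\mathrm{id}_{(0,1]}\otimes\cdot)$, where $h:=\mathrm{id}_{(0,1]}$ is the canonical generator. Because $B$ is nuclear, $\mathrm{C}_0(0,1]\otimes B$ is nuclear, so by the Choi--Effros lifting theorem the composition of $\rho$ with the quotient map $\ell^\infty(\mathbb{N},A)\cap\{\text{continuous part}\}\to A_{\infty,\alpha}$ admits a c.p.c.\ lift, i.e.\ a sequence of c.p.c.\ maps $\sigma_n\colon \mathrm{C}_0(0,1]\otimes B\to A$. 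Setting $\kappa_n(x):=\sigma_n(h\otimes x)$ gives c.p.c.\ maps $B\to A$ representing $\kappa$; since $\rho$ is a $*$-homomorphism, one reads off that $\|\kappa_n(xy)\kappa_n(\eins) - \kappa_n(x)\kappa_n(y)\|\to 0$ for all $x,y\in B$. Because $\kappa(\eins)=\rho(h\otimes\eins)$ is represented by $(\kappa_n(\eins))_n$, this is exactly condition (a). Moreover, by \autoref{prop:cont-ell}, any bounded representing sequence of an element of $A_{\infty,\alpha}$ is automatically a continuous element for the componentwise action on $\ell^\infty(\mathbb{N},A)$, so all the sequences appearing here are continuous.

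For equivariance, note that $\kappa$ being equivariant means $\rho$ intertwines the action $\mathrm{id}_{\mathrm{C}_0(0,1]}\otimes\beta$ with $\alpha_\infty$. Fix a countable dense subset $\{x^{(1)},x^{(2)},\dots\}\subset B$, a countable dense subset of a cofinal sequence of compact sets $K_1\subseteq K_2\subseteq\cdots\subseteq G$ exhausting $G$, and a countable dense set of group elements within each. For each fixed finite data set $(F,K,\eps)$ with $F\fin B$, $K\subset G$ compact, $\eps>0$, the equivariance of $\kappa$ (together with continuity of $g\mapsto\alpha_{\infty,g}$ on the continuous part, and \autoref{prop:cont-ell} guaranteeing that $(\kappa_n\circ\beta_g - \alpha_g\circ\kappa_n)(x)$ represents the element $\kappa(\beta_g x)-\alpha_{\infty,g}\kappa(x)=0$ for fixed $g$) shows that
\[
\limsup_{n\to\infty}~\max_{g\in K}~\|(\kappa_n\circ\beta_g)(x)-(\alpha_g\circ\kappa_n)(x)\| = 0 \quad\text{for each } x\in F,
\]
after a routine compactness argument to pass from finitely many group elements to all of $K$ uniformly (using point-norm continuity of $\alpha$, $\beta$, and equicontinuity of the $\kappa_n$). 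Since each of conditions (a) and (b) is a statement of the form ``a certain countable family of real-valued sequences tends to $0$'', I would encode the whole situation as a single instance of Kirchberg's $\eps$-test \cite[Lemma A.1]{Kirchberg04}: the test sequences are the quantities $\|\kappa_n(x^{(i)}x^{(j)})\kappa_n(\eins)-\kappa_n(x^{(i)})\kappa_n(x^{(j)})\|$ and $\max_{g\in K_m}\|(\kappa_n\circ\beta_g)(x^{(i)})-(\alpha_g\circ\kappa_n)(x^{(i)})\|$, and the above two limit computations verify the hypothesis of the $\eps$-test. This yields a single reindexed sequence of c.p.c.\ maps satisfying both (a) and (b) simultaneously, and by density these extend to all $x,y\in B$ and all compact $K\subset G$.

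The main obstacle is the uniformity over the compact set $K$ in condition (b): for a \emph{fixed} $g$ the error sequence represents $0$ in $A_\infty$ and hence vanishes in norm, but one needs this uniformly in $g\in K$. This is handled by combining the point-norm continuity of the actions with the equicontinuity of the family $\{\kappa_n\}$ (all c.p.c., hence contractive) to reduce to a finite net of group elements in $K$, at which point it is a genuine finite maximum of null sequences; no deeper input is required, but this is the step that needs care. Everything else is a bookkeeping exercise in reindexing. (One small alternative: instead of $\eps$-testing one can also first fix (a) by the two-stage lifting above, then separately perturb to get (b) and check that the order zero estimate is stable under the perturbation — but folding both into the $\eps$-test is cleaner.)
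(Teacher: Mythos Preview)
Your argument is correct and uses exactly the three ingredients the paper cites (the paper gives no further proof beyond naming them): Winter--Zacharias for the cone picture, Choi--Effros to lift, and \autoref{prop:cont-ell} for the continuity of the lift. Two small remarks: first, the $\eps$-test is unnecessary, since the \emph{single} lifted sequence $\kappa_n(x)=\sigma_n(h\otimes x)$ already satisfies both (a) and (b) --- there is nothing to merge; second, the role of \autoref{prop:cont-ell} is not to see that the difference represents $0$ (that is bare equivariance of $\kappa$) but to provide the equicontinuity on the $\alpha_g\circ\kappa_n$ side: it says precisely that $(\kappa_n(x))_n$, being a lift of $\kappa(x)\in A_{\infty,\alpha}$, is a continuous element for the componentwise action on $\ell^\infty(\mathbb{N},A)$, i.e., $\sup_n\|\alpha_g(\kappa_n(x))-\kappa_n(x)\|\to 0$ as $g\to e$, which is exactly what makes the compactness reduction over $K$ work.
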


The proof of the following is based on a standard reindexing trick.
In the short proof below, precise references are provided for completeness, although we note that this might not be the most elegant or direct way to show these statements.

\begin{lemma} \label{lem:reindexation}
Let $G$ be a second-countable, locally compact group.
Suppose that $\alpha: G\curvearrowright A$, $\beta: G\curvearrowright B$, and $\gamma: G\curvearrowright\CD$ are actions on separable \cstar-algebras.
Assume furthermore that $\CD$ is unital, that $\gamma$ is semi-strongly self-absorbing, and that $\beta\cc\beta\otimes\gamma$.
\begin{enumerate}[label=\textup{(\roman*)},leftmargin=*]
\item Suppose that there exists an equivariant $*$-homomorphism $(A,\alpha)\to (B,\beta)$ that is $G$-equivariantly sequentially split. Then $\alpha\cc\alpha\otimes\gamma$. \label{lem:reindexation:1}
\item Suppose that $B$ is unital and that there exists an equivariant and unital $*$-homomorphism from $(B,\beta)$ to $(F_{\infty,\alpha},\tilde{\alpha}_\infty)$.
Then $\alpha\cc\alpha\otimes\gamma$. \label{lem:reindexation:2}
\end{enumerate}
\end{lemma}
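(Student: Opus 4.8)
The plan is to reduce both statements to an application of \autoref{thm:equi-McDuff}, specifically the equivalence \ref{equi-McDuff3}$\Leftrightarrow$\ref{equi-McDuff1}: to conclude $\alpha\cc\alpha\otimes\gamma$ it suffices to produce a unital equivariant $*$-homomorphism $(\CD,\gamma)\to\big(F_{\infty,\alpha}(A),\tilde\alpha_\infty\big)$. In both parts we are handed a unital equivariant $*$-homomorphism landing in a central sequence algebra built from $(B,\beta)$, and the hypothesis $\beta\cc\beta\otimes\gamma$ — equivalently, by \autoref{thm:equi-McDuff}\ref{equi-McDuff3}, a unital equivariant $*$-homomorphism $(\CD,\gamma)\to\big(F_{\infty,\beta}(B),\tilde\beta_\infty\big)$. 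The task is to transport the latter into $F_{\infty,\alpha}(A)$ along the given data and then invoke a standard reindexation argument (Kirchberg's $\eps$-test, \cite[Lemma A.1]{Kirchberg04}) to correct the resulting asymptotically multiplicative, asymptotically equivariant c.p.\ maps into an honest equivariant unital $*$-homomorphism out of the (separable, nuclear) \cstar-algebra $\CD$.

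For part \ref{lem:reindexation:2}, I would argue as follows. Let $\varrho\colon (B,\beta)\to(F_{\infty,\alpha}(A),\tilde\alpha_\infty)$ be the given unital equivariant $*$-homomorphism, and let $\theta\colon(\CD,\gamma)\to\big(F_{\infty,\beta}(B),\tilde\beta_\infty\big)$ be a unital equivariant $*$-homomorphism coming from $\beta\cc\beta\otimes\gamma$ via \autoref{thm:equi-McDuff}. Composing, one wants to make sense of $\varrho\circ\theta$ as a map $(\CD,\gamma)\to\big(F_{\infty,\alpha}(A),\tilde\alpha_\infty\big)$. This is the usual ``$F(F(A))\rightsquigarrow F(A)$'' reindexation: lift $\theta$ to a sequence of c.p.c.\ maps $\CD\to B$ that are asymptotically multiplicative relative to $\eins$, asymptotically equivariant, and asymptotically central relative to $B$ (using \autoref{F(A)-lift} applied to $\beta$, and nuclearity and separability of $\CD$); push these through $\varrho$ represented by suitable bounded sequences in $A$; then apply the $\eps$-test over a countable dense set of elements of $\CD$, a countable dense set of $B$-representatives witnessing centrality, and a countable exhaustion of $G$ by compact sets to extract a genuine unital equivariant $*$-homomorphism $(\CD,\gamma)\to\big(F_{\infty,\alpha}(A),\tilde\alpha_\infty\big)$. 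Then \autoref{thm:equi-McDuff} gives $\alpha\cc\alpha\otimes\gamma$.

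For part \ref{lem:reindexation:1}, a unital-issue arises because $A$ need not be unital, but the point of passing to $F_{\infty,\alpha}(A)$ is precisely that it is unital. Given the equivariantly sequentially split $*$-homomorphism $\varphi\colon(A,\alpha)\to(B,\beta)$ with equivariant approximate left inverse $\psi\colon(B,\beta)\to(A_{\infty,\alpha},\alpha_\infty)$, and given from $\beta\cc\beta\otimes\gamma$ a unital equivariant $*$-homomorphism $\theta\colon(\CD,\gamma)\to\big(F_{\infty,\beta}(B),\tilde\beta_\infty\big)$, I would first observe that $\psi$ induces an equivariant map $F_{\infty,\beta}(B)\to F_{\infty,\alpha}(A)$ — this is exactly the content of \cite[Lemma 4.2]{BarlakSzabo16ss} or the functoriality of $F(-)$ under sequentially split maps, the relevant fact being that $\psi$ followed by a conditional-expectation-type reindexation carries $B$-central sequences to $A$-central sequences and preserves units up to the annihilator ideal. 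Composing $\theta$ with this induced map yields the desired unital equivariant $*$-homomorphism $(\CD,\gamma)\to\big(F_{\infty,\alpha}(A),\tilde\alpha_\infty\big)$, and again \autoref{thm:equi-McDuff} finishes the job. Alternatively — and perhaps more cleanly — part \ref{lem:reindexation:1} can be deduced from part \ref{lem:reindexation:2} directly: a sequentially split $\varphi\colon(A,\alpha)\to(B,\beta)$ together with $\beta\cc\beta\otimes\gamma$ forces, via the reindexation encoded in \cite[Lemma 4.2]{BarlakSzabo16ss}, the existence of a unital equivariant $*$-homomorphism from $(\CD,\gamma)$ into $(F_{\infty,\alpha}(A),\tilde\alpha_\infty)$, which is the hypothesis of \ref{lem:reindexation:2}.

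The main obstacle is bookkeeping rather than conceptual: one must carefully set up the $\eps$-test so that the three families of conditions (asymptotic multiplicativity of the lift of $\theta$, asymptotic equivariance over an exhausting sequence of compact subsets of $G$, and asymptotic centrality modulo $\ann(A,A_\infty)$ witnessed on a countable generating set) are simultaneously separable and can be satisfied along a single subsequence; the local-compactness of $G$ and the continuity constraints defining $F_{\infty,\alpha}(A)$ and $F_{\infty,\beta}(B)$ require using \autoref{F(A)-lift} and \autoref{prop:cont-ell} to keep all approximations inside the continuous parts. None of this is hard in principle, but it is the step where the argument has real content; accordingly I would keep the write-up terse and lean on the cited references for the standard reindexation machinery.
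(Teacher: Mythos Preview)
Your approach is workable but takes a genuinely different route from the paper, and in the reverse order.

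The paper proves \ref{lem:reindexation:1} first and then deduces \ref{lem:reindexation:2} from it, whereas you attempt \ref{lem:reindexation:2} directly via reindexation and then suggest reducing \ref{lem:reindexation:1} to it. More importantly, the paper works with characterization \ref{equi-McDuff4} of \autoref{thm:equi-McDuff} (sequential splitness of the first-factor embedding) rather than \ref{equi-McDuff3} (existence of a map into the central sequence algebra). For \ref{lem:reindexation:1}, the paper simply observes that $\beta\cc\beta\otimes\gamma$ means $\id_B\otimes\eins_\CD$ is sequentially split, composes with the sequentially split $\phi$ to get $\phi\otimes\eins_\CD$ sequentially split via \cite[Proposition 3.7]{BarlakSzabo16ss}, and then factors $\phi\otimes\eins_\CD=(\phi\otimes\id_\CD)\circ(\id_A\otimes\eins_\CD)$ to conclude $\id_A\otimes\eins_\CD$ is sequentially split. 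For \ref{lem:reindexation:2}, the paper uses \cite[Lemma 4.2]{BarlakSzabo16ss} to translate the hypothesis into the statement that $\id_A\otimes\eins_B:(A,\alpha)\to(A\otimes_{\max}B,\alpha\otimes\beta)$ is sequentially split, notes that $\alpha\otimes\beta$ is $\gamma$-absorbing since $\beta$ is, and applies \ref{lem:reindexation:1}. No explicit $\eps$-test or lift-and-reindex argument is written out; all of that is hidden inside the composition lemma for sequentially split maps.

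Your strategy is not wrong, but two points deserve comment. First, your appeal to \cite[Lemma 4.2]{BarlakSzabo16ss} for ``functoriality of $F(-)$ under sequentially split maps'' in part \ref{lem:reindexation:1} is not what that lemma actually says --- it gives the equivalence between a unital map $B\to F_{\infty,\alpha}(A)$ and sequential splitness of $\id_A\otimes\eins_B$, not an induced map $F_{\infty,\beta}(B)\to F_{\infty,\alpha}(A)$. Such an induced map can be constructed, but it requires its own reindexation argument; you cannot simply cite that lemma. Second, the paper's route buys you a three-line proof with no bookkeeping whatsoever, whereas your route requires the careful $\eps$-test setup you yourself flag as the main obstacle. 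Since the relevant composition lemma is already available, the paper's approach is the one to take.
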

\begin{proof}
\ref{lem:reindexation:1}: By \autoref{thm:equi-McDuff}, the statement $\beta\cc\beta\otimes\gamma$ is equivalent to the equivariant first-factor embedding
\[
\id_B\otimes\eins: (B,\beta)\to (B\otimes\CD,\beta\otimes\gamma)
\]
being sequentially split.
Let $\phi: (A,\alpha)\to (B,\beta)$ be sequentially split.
By \cite[Proposition 3.7]{BarlakSzabo16ss}, the composition 
$\phi\otimes\eins_\CD=(\id_B\otimes\eins_\CD)\circ\phi$ is also sequentially split.
However, we also have $\phi\otimes\eins_\CD = (\phi\otimes\id_\CD)\circ(\id_A\otimes\eins_\CD)$, which implies that $\id_A\otimes\eins_\CD$ is also sequentially split.
This implies the claim that $\alpha\cc\alpha\otimes\gamma$.

\ref{lem:reindexation:2}: By \cite[Lemma 4.2]{BarlakSzabo16ss}, it follows that the embedding
\[
\id_A\otimes\eins_B: (A,\alpha)\to (A\otimes_{\max} B,\alpha\otimes\beta)
\]
is sequentially split. 
Since we assumed that $\beta$ is $\gamma$-absorbing, so is $\alpha\otimes\beta$, and so the claim arises as a special case of \ref{lem:reindexation:1}.
\end{proof}

The following is a special case of \autoref{thm:dimrokc-main}, as the process of tensorially stabilizing any action $\alpha: G\curvearrowright A$ with $(\CC(G/\CS),\sigma^\CS)$ causes the Rokhlin dimension relative to $\CS$ to collapse to zero by definition.
This explains why the statement below makes no explicit reference to Rokhlin dimension.
Its proof is by far the most technical part of this paper:

\begin{lemma} \label{lem:main-technical}
Let $G$ be a second-countable, locally compact group and $N\subset G$ a closed, normal subgroup. 
Denote by $\pi_N: G\to G/N$ the quotient map. 
Let $\CS_1=(H_k)_k$ be a residually compact approximation of $G/N$, and set $G_k=\pi_N^{-1}(H_k)$ for all $k\in\IN$ and $\CS_0=(G_k)_k$.
Let $A$ be a separable \cstar-algebra and $\CD$ a strongly self-absorbing \cstar-algebra. 
Let $\alpha: G\curvearrowright A$ be an action and $\gamma: G\curvearrowright\CD$ a semi-strongly self-absorbing, unitarily regular action. 
Suppose that for the restrictions to the $N$-actions, we have $\alpha|_N \cc (\alpha\otimes\gamma)|_N$. 
If $\asdim(\square_{\CS_1} H_1)<\infty$, then the $G$-action
\[
\sigma^{\CS_0}\otimes\alpha : G\curvearrowright \CC(G/\CS_0)\otimes A
\]
is $\gamma$-absorbing.
\end{lemma}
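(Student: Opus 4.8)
The plan is to combine the $N$-equivariant $\CD$-absorption that is assumed with a Rokhlin-type tower structure on the model $(\CC(G/\CS_0),\sigma^{\CS_0})$ coming from the finite asymptotic dimension of the box space. First I would reduce, via \autoref{thm:equi-McDuff} (in the guise of \autoref{lem:reindexation}(ii), applied with $B=\CD$, $\beta=\gamma$ and using $\gamma\cc\gamma\otimes\gamma$), the claim to producing a single unital, $G$-equivariant $*$-homomorphism
\[
\Phi\colon(\CD,\gamma)\longrightarrow\big(F_{\infty,\sigma^{\CS_0}\otimes\alpha}(\CC(G/\CS_0)\otimes A),\ \widetilde{(\sigma^{\CS_0}\otimes\alpha)}_\infty\big).
\]
By a standard reindexing argument (Kirchberg's $\eps$-test, exactly as in the proof of \autoref{lem:dimrokc-eq}) together with the equivariant inductive limit decomposition $\CC(G/\CS_0)=\varinjlim_k\CC(G/G_k)$, it then suffices to construct, for every $\eps>0$, every $\CF\fin\CD$, every compact $K\subseteq G$, every finite subset of $\CC(G/\CS_0)\otimes A$ and every $k\in\IN$, a single c.p.c.\ map $\CD\to\CC(G/G_k)\otimes A$ that is order zero and multiplicative on $\CF$ up to $\eps$, approximately unital, approximately central relative to the prescribed finite set, and approximately equivariant on $K$ for $\gamma$ on $\CD$ and the $G$-shift tensored with $\alpha$ on the target.

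There are two inputs. From the hypothesis $\alpha|_N\cc(\alpha\otimes\gamma)|_N$ and \autoref{thm:equi-McDuff} one obtains a unital $N$-equivariant $*$-homomorphism $(\CD,\gamma|_N)\to F_{\infty,\alpha|_N}(A)$, which by \autoref{F(A)-lift} is represented by a sequence of c.p.c.\ maps $\kappa_m\colon\CD\to A$ that are asymptotically multiplicative, asymptotically central, asymptotically unital, and asymptotically equivariant from $\gamma|_N$ to $\alpha|_N$ uniformly on compact subsets of $N$. From $\asdim(\square_{\CS_1}H_1)<\infty$, say equal to $d$, and \autoref{lem:decay} applied to the group $G/N$ with its residually finite approximation $\CS_1$, one obtains, as finely as desired, a $(d+1)$-coloured partition of unity $\bar\mu^{(0)},\dots,\bar\mu^{(d)}\in\CC_c(G/N,[0,1])$ satisfying the disjointness property \autoref{lem:decay}\ref{lem:decay:a} relative to translation by some $H_j$ with $j\geq k$, the covering identity \autoref{lem:decay}\ref{lem:decay:b}, and the approximate $G/N$-invariance \autoref{lem:decay}\ref{lem:decay:c}; pull these back along $\pi_N$ to functions $\mu^{(l)}=\bar\mu^{(l)}\circ\pi_N$ on $G$. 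One then assembles the desired map by distributing copies of the $N$-equivariant $\CD$-structure carried by $\kappa_m$ over the Rokhlin towers cut out by the $\mu^{(l)}$, pre-composing the copy sitting over the tower labelled by a coset $\bar h\in H_j$ with a $\gamma$-twist attached to a chosen lift of $\bar h$ to $G$; the point is that the tower labels should absorb the part of the $G$-equivariance living on $G/N$, while the residual part on $N$ is handled by the asymptotic $N$-equivariance of $\kappa_m$. The output is a c.p.c.\ map into $\CC_b(G,A)$ which, by \autoref{lem:decay}\ref{lem:decay:c} and the fact that any compact subset of $G_j$ meets only finitely many cosets of $N$, is approximately $G_j$-periodic in the sense required by \autoref{cor:cpc-perturb}; one then perturbs it, by means of that corollary, to an honest c.p.c.\ map into $\CC(G/G_j)\otimes A\hookrightarrow\CC(G/\CS_0)\otimes A$.

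It remains to check the four approximate properties for $m$ and $j$ large. Approximate order zero and multiplicativity on $\CF$ follow from the disjointness of the supports of $\mu^{(l)}(\,\cdot\,\bar h)$ for fixed $l$ (\autoref{lem:decay}\ref{lem:decay:a}) together with the asymptotic multiplicativity of $\kappa_m$; approximate unitality follows from the covering identity (\autoref{lem:decay}\ref{lem:decay:b}) and asymptotic unitality of $\kappa_m$; approximate centrality in $\CC(G/\CS_0)\otimes A$ follows since $\kappa_m$ is asymptotically central in $A$ and the scalar tower functions commute with the abelian tensor factor; and approximate $G$-equivariance follows by absorbing a left translation by an element of $K$ using the approximate invariance of the tower functions (\autoref{lem:decay}\ref{lem:decay:c}) together with the homomorphism property $\gamma_g\gamma_{g'}=\gamma_{gg'}$. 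Feeding the resulting approximate data into Kirchberg's $\eps$-test produces the genuine unital $G$-equivariant $*$-homomorphism $\Phi$, and \autoref{thm:equi-McDuff} then yields that $\sigma^{\CS_0}\otimes\alpha$ is $\gamma$-absorbing.

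I expect the assembly step to be the main obstacle: arranging the $\gamma$-twists on the towers so that a $G$-translation is genuinely absorbed with only controllable error, while the output stays approximately periodic, central and order zero — and, underlying all of this, reconciling the fact that elements of $F_{\infty,\alpha|_N}(A)$ are only $N$-continuous with the requirement of $G$-continuity for the output. It is precisely this tension that forces the tower functions to be approximately $G/N$-invariant, and it is where essentially all of the estimates concentrate; the potentially infinite sum over the cosets in $H_j$ causes no trouble since each $\mu^{(l)}$ is compactly supported, so on any compact set of the $G$-variable only finitely many towers are active.
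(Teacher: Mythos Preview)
Your overall architecture is right --- averaging the $N$-equivariant $\CD$-structure along the tower decomposition coming from \autoref{lem:decay}, and then perturbing via \autoref{cor:cpc-perturb} --- but there is a real gap at the ``assembly'' step.  You are aiming for a \emph{single} approximately multiplicative c.p.c.\ map $\CD\to\CC(G/G_j)\otimes A$, and you argue multiplicativity from the disjointness in \autoref{lem:decay}\ref{lem:decay:a}.  That disjointness, however, is only \emph{within a fixed colour} $l$; the supports of $\mu^{(l_1)}$ and $\mu^{(l_2)}$ for $l_1\neq l_2$ overlap freely.  At a point $g$ where two colours are active, your map is a genuine convex combination of $\alpha_{g\sigma(h_1)}\circ\kappa_m\circ\gamma_{g\sigma(h_1)}^{-1}$ and $\alpha_{g\sigma(h_2)}\circ\kappa_m\circ\gamma_{g\sigma(h_2)}^{-1}$ for distinct $h_1,h_2$, and such a combination is not order zero (let alone multiplicative), since these twisted copies of $\kappa_m$ have no reason to commute.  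So your single map fails to be approximately multiplicative as soon as $d\geq 1$.

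The paper's proof confronts exactly this obstruction by \emph{not} collapsing the colours.  It selects $d{+}1$ subsequences $\kappa_n^{(0)},\dots,\kappa_n^{(d)}$ of the lifts, chosen (by a reindexing trick in the central sequence algebra) so that for $l_1\neq l_2$ the ranges of $\kappa_n^{(l_1)}$ and $\alpha_g\circ\kappa_n^{(l_2)}$ asymptotically commute for $g$ in compacta.  This produces $d{+}1$ equivariant c.p.c.\ \emph{order zero} maps $\zeta^{(0)},\dots,\zeta^{(d)}$ into the central sequence algebra with pairwise commuting ranges and $\sum_l\zeta^{(l)}(\eins)=\eins$.  The universal property then yields a unital equivariant $*$-homomorphism from $(\CD^{\star(d+1)},\gamma^{\star(d+1)})$, not from $(\CD,\gamma)$; one concludes via \autoref{lem:multi-cones-absorption}, which is precisely where the unitary regularity of $\gamma$ --- an assumption you never invoke --- enters.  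In short, you are missing two coupled ingredients: the pairwise-commuting multi-colour lift, and the passage through the join $\CD^{\star(d+1)}$ that absorbs the colour multiplicity at the cost of unitary regularity.
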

\begin{proof}
Set $d:=\asdim(\square_{\CS_1} H_1)$.
Let
\[
\tilde{\kappa}: (\CD,\gamma|_N)\to\big( F_{\infty,\alpha|_N}(A), \tilde{\alpha}_\infty|_N \big)
\]
be an $N$-equivariant, unital $*$-homomorphism.
Using \cite[Example 4.4, Proposition 4.5]{Szabo18ssa2}, we may choose an equivariant c.p.c.\ order zero map
\[
\kappa: (\CD,\gamma|_N)\to \big( A_{\infty,\alpha|_N}\cap A' , \alpha_\infty|_N \big)
\] 
that lifts $\tilde{\kappa}$.

Consider a sequence of c.p.c.\ maps $\kappa_n: B\to A$ lifting $\kappa$ as in \autoref{F(A)-lift}. 
Let us choose finitely many subsequences $\kappa_n^{(l)}: B\to A$ of the maps $\kappa_n$ for $l=0,\dots,d$ so that, using \autoref{F(A)-lift}, each sequence $\kappa_n^{(l)}$ has the following properties for all $a\in A$, $b,b_1,b_2\in \CD$ and compact sets $L\subseteq N$: 
\begin{equation} \label{e:kappa-1}
\|[\kappa_n^{(l)}(b),a]\|\to 0;
\end{equation}
\begin{equation} \label{e:kappa-2}
\|  \kappa_n^{(l)}(b_1b_2)\kappa_n^{(l)}(\eins)-\kappa_n^{(l)}(b_1)\kappa_n^{(l)}(b_2)\|\to 0;
\end{equation}
\begin{equation} \label{e:kappa-3} 
\|\big(\kappa_n^{(l)}(\eins)-\eins\big)\cdot a\|\to 0;
\end{equation}
\begin{equation} \label{e:kappa-4}
\max_{r\in L}~\| (\kappa_n^{(l)}\circ\gamma_r)(b) - (\alpha_r\circ\kappa_n^{(l)})(b)\|\to 0;
\end{equation}
and additionally one has for every compact set $K\subseteq G$ that
\begin{equation} \label{e:kappa-5}
\max_{g\in K}~ \|[\kappa_n^{(l_1)}(b_1),(\alpha_g\circ\kappa_n^{(l_2)})(b_2)]\|\to 0\quad\text{for all } l_1\neq l_2.
\end{equation}

Let now $\eps>0$ be a fixed parameter and $1_G\in K\subseteq G$ a fixed compact set. 
Apply \autoref{lem:decay} and find $k$ and compactly supported functions $\mu^{(0)},\dots,\mu^{(d)}\in\CC_c(G/N)$, so that for every $l=0,\dots,d$, we have
\begin{equation} \label{e:decay-1}
\supp(\mu^{(l)})\cap \supp(\mu^{(l)})\cdot h = \emptyset \quad\text{for all}~h\in H_k\setminus\set{1};
\end{equation}
\begin{equation} \label{e:decay-2}
\sum_{l=0}^d~ \sum_{h\in H_k} \mu^{(l)}(\pi_N(g)h) = 1 \quad\text{for all}~g\in G;
\end{equation}
\begin{equation} \label{e:decay-3}
\|\mu^{(l)}(\pi_N(g)\cdot\_)-\mu^{(l)}\|_\infty \leq \eps \quad\text{for all}~g\in K\cup K^{-1}.
\end{equation}
The group $H_k$ is discrete, so we may choose a cross-section $\sigma: H_k\to G_k=\pi_N^{-1}(H_k)\subseteq G$.
For each $l=0,\dots,d$, consider the sequence of c.p.c.\ maps
\[
\Theta^{(l)}_n: \CD\to \CC_b(G, A)
\]
given by
\begin{equation} \label{e:Theta}
\Theta^{(l)}_n(b)(g) = \sum_{h\in H_k} \mu^{(l)}(\pi_N(g)h)\cdot (\alpha_{g\sigma(h)}\circ\kappa_n^{(l)}\circ\gamma_{g\sigma(h)}^{-1})(b).
\end{equation} 
This sum is well-defined because the compact support of the function $\mu^{(l)}$ on $G/N$ meets a set of the form $\pi_N(g)\cdot H_k$ at most once according to \eqref{e:decay-1}.

We wish to show that given an element $b\in\CD$, the functions $\Theta_n^{(l)}(b)$ are approximately $G_k$-periodic on large compact sets.
This is so that we may apply \autoref{cor:cpc-perturb} in order to approximate the maps $\Theta^{(l)}_n$ by other maps going into $\CC(G/G_k, A)$. 

Let $K_{H_k}\subseteq G_k$ and $K_G\subseteq G$ be two compact sets. As $H_k$ is discrete, we observe two facts. First, there exists a compact set $K_N\subseteq N$ and a finite set $1\in F_k\fin H_k$ with 
\begin{equation} \label{e:K-überdeckung}
K_{H_k}\subset \sigma(F_k)\cdot K_N. 
\end{equation}
Second, by possibly enlarging $F_k$ if necessary, we may assume by \eqref{e:decay-1} that also
\begin{equation} \label{e:decay-positiv}
\mu^{(l)}(\pi_N(g)h)>0 \quad\text{implies}\quad h\in F_k \quad\text{for all }g\in K_G.
\end{equation}
Define also
\begin{equation} \label{eq:K-N-strich}
K_N' = \bigcup_{h_0, h\in F_k} \sigma(h_0)\cdot K_N\cdot\sigma(h_0^{-1}h)\sigma(h)^{-1} \subseteq N
\end{equation}
and
\begin{equation} \label{e:K-N-strich-strich}
K_N'' = \bigcup_{h\in F_k} \sigma(h)^{-1}\cdot K_N'\cdot \sigma(h) \subseteq N.
\end{equation}
As $N$ is a normal subgroup and $\sigma$ is a cross-section for the quotient map $\pi_N$, it follows that these are compact subsets in $N$.

We compute for all $l=0,\dots,d$, $b\in\CD$, $g\in K_G$, $h_0\in F_k$ and $r\in K_N'$ that
\[
\renewcommand{\arraystretch}{1.8}
\begin{array}{cl}
\multicolumn{2}{l}{ \|(\alpha_{g\sigma(h_0)}\circ\kappa^{(l)}_n\circ\gamma_{g\sigma(h_0)}^{-1})(b)-(\alpha_{gr\sigma(h_0)}\circ\kappa^{(l)}_n\circ\gamma_{gr\sigma(h_0)}^{-1})(b)\| } \\
=& \|(\alpha_{\sigma(h_0)}\circ\kappa^{(l)}_n\circ\gamma_{\sigma(h_0)}^{-1})(\gamma_g^{-1}(b))-(\alpha_{r\sigma(h_0)}\circ\kappa^{(l)}_n\circ\gamma_{r\sigma(h_0)}^{-1})(\gamma_g^{-1}(b))\| \\
=& \|(\alpha_{\sigma(h_0)}\circ\kappa^{(l)}_n\circ\gamma_{\sigma(h_0)}^{-1})(\gamma_g^{-1}(b)) \\
& -(\alpha_{\sigma(h_0)}\circ\alpha_{\sigma(h_0)^{-1}r\sigma(h_0)}\circ\kappa^{(l)}_n\circ\gamma^{-1}_{\sigma(h_0)^{-1}r\sigma(h_0)}\circ\gamma_{\sigma(h_0)}^{-1})(\gamma_g^{-1}(b))\| \\
\stackrel{\eqref{e:K-N-strich-strich}}{\leq} & \dst \max_{g\in K_G}~\max_{s\in K_N''}~\|(\alpha_s\circ\kappa^{(l)}_n\circ\gamma_s^{-1})(\gamma_{g\sigma(h_0)}^{-1}(b))-\kappa^{(l)}_n(\gamma_{g\sigma(h_0)}^{-1}(b)) \| \\
\stackrel{\eqref{e:kappa-4}}{\longrightarrow} & 0. \qquad(\text{uniformly on } K_G, K_N')
\end{array}
\]
It thus follows for all $l=0,\dots,d$, $b\in\CD$, $g\in K_G$, $h_0\in F_k$ and $r\in K_N$ that
\[
\renewcommand{\arraystretch}{1.8}
\begin{array}{cl}
\multicolumn{2}{l}{ \|\Theta^{(l)}_n(b)(g)-\Theta^{(l)}_n(b)(g\sigma(h_0)r)\| } \\
\stackrel{\eqref{e:Theta}, \eqref{e:decay-positiv}}{=}\hspace{-3mm}& \dst \Big\| \sum_{h_1\in F_k} \mu^{(l)}(\pi_N(g)h_1)\cdot (\alpha_{g\sigma(h_1)}\circ\kappa_n^{(l)}\circ\gamma_{g\sigma(h_1)}^{-1})(b) \\
& \dst - \sum_{h_2\in h_0^{-1}F_k} \mu^{(l)}(\pi_N(g)h_0h_2)\cdot (\alpha_{g\sigma(h_0)r\sigma(h_2)}\circ\kappa_n^{(l)}\circ\gamma_{g\sigma(h_0)r\sigma(h_2)}^{-1})(b) \Big\| \\
=& \dst \Big\| \sum_{h_1\in F_k} \mu^{(l)}(\pi_N(g)h_1)\cdot (\alpha_{g\sigma(h_1)}\circ\kappa_n^{(l)}\circ\gamma_{g\sigma(h_1)}^{-1})(b) \\
& \dst - \sum_{h_2\in F_k} \mu^{(l)}(\pi_N(g)h_2)\cdot (\alpha_{g\sigma(h_0)r\sigma(h_0^{-1}h_2)}\circ\kappa_n^{(l)}\circ\gamma_{g\sigma(h_0)r\sigma(h_0^{-1}h_2)}^{-1})(b) \Big\| \\
\stackrel{\eqref{e:decay-1}}{=}& \dst \max_{h\in F_k}~ \|(\alpha_{g\sigma(h)}\circ\kappa_n^{(l)}\circ\gamma_{g\sigma(h)}^{-1})(b) \\
& -(\alpha_{g\sigma(h_0)r\sigma(h_0^{-1}h)}\circ\kappa_n^{(l)}\circ\gamma_{g\sigma(h_0)r\sigma(h_0^{-1}h)})(b) \| \\
=& \dst \max_{h\in F_k}~ \|(\alpha_{g\sigma(h)}\circ\kappa_n^{(l)}\circ\gamma_{g\sigma(h)}^{-1})(b) \\
& -(\alpha_{g\sigma(h_0)r\sigma(h_0^{-1}h)\sigma(h)^{-1}\sigma(h)}\circ\kappa_n^{(l)}\circ\gamma_{g\sigma(h_0)r\sigma(h_0^{-1}h)\sigma(h)^{-1}\sigma(h)}^{-1})(b) \| \\
\stackrel{\eqref{eq:K-N-strich}}{=}& \dst \max_{h\in F_k}~\max_{s\in K_N'}~ \| (\alpha_{g\sigma(h)}\circ\kappa_n^{(l)}\circ\gamma_{g\sigma(h)}^{-1})(b) - (\alpha_{gs\sigma(h)}\circ\kappa_n^{(l)}\circ\gamma_{gs\sigma(h)}^{-1})(b) \| \\
\stackrel{}{\longrightarrow} & 0. \qquad(\text{uniformly on } K_G, K_N)
\end{array}
\]
Here we have used \eqref{e:decay-1} in the third equality in the sense that $\mu^{(l)}(\pi_N(g)h)$ is non-zero for a unique element $h\in F_k$.
By \eqref{e:K-überdeckung} we get for all $b\in \CD$ that
\[
\renewcommand{\arraystretch}{1.8}
\begin{array}{cl}
\multicolumn{2}{l}{ \dst \max_{g\in K_G}~\max_{t\in K_{H_k}}~\|\Theta^{(l)}_n(b)(g)-\Theta^{(l)}_n(b)(gt)\| } \\
\stackrel{\eqref{e:K-überdeckung}}{\leq} & \dst \max_{g\in K_G}~\max_{h_0\in F_k}~\max_{r\in K_N}~\|\Theta^{(l)}_n(b)(g)-\Theta^{(l)}_n(b)(g\sigma(h_0)r)\| \\
\stackrel{}{\longrightarrow} & 0.
\end{array}
\]
Since $K_G\subseteq G$ and $K_{H_k}\subseteq G_k$ were arbitrary compact sets, we are in the position to apply \autoref{cor:cpc-perturb}.
As $\CD$ is separable, it follows for every $l=0,\dots,d$ that there exists a sequence of c.p.c.\ maps
\[
\Psi^{(l)}_n: B\to\CC(G/G_k, A) 
\]
so that for every compact set $K_G\subseteq G$ and $b\in\CD$, we have
\begin{equation} \label{e:Theta-Psi}
\max_{g\in K_G}~ \|\Psi^{(l)}_n(b)(gG_k)-\Theta^{(l)}_n(b)(g)\|\to 0.
\end{equation}

We now wish to show that these c.p.c.\ maps are approximately equivariant with regard to $\gamma$ and $\sigma^{G_k}\otimes\alpha$, where $\sigma^{G_k}$ is the $G$-action on $\CC(G/G_k)$ induced by the left-translation of $G$ on $G/G_k$.
 
Let us fix a compact set $K_{G}\subseteq G$ as above. 
Without loss of generality, let us assume that it is large enough so that the quotient map $G\to G/G_k$ is still surjective when restricted to $K_{G}$. 
Given $b\in \CD$, set
\begin{equation} \label{e:rho(b)}
\rho_n(b) = \max_{l=0,\dots,d}~\max_{g\in K^{-1}K_{G}}~ \|\Psi^{(l)}_n(b)(gG_k)-\Theta^{(l)}_n(b)(g)\|.
\end{equation}
Note that by an elementary compactness argument, it follows from \eqref{e:Theta-Psi} that for every compact set $J\subset \CD$, we have
\begin{equation} \label{e:rho-est}
\max_{b\in J}~\rho_n(b) \to 0.
\end{equation}
Let $t\in K$, $g\in K_{G}$ and $b\in \CD$ with $\norm b\norm\leq 1$. 
Then
\[
\begin{array}{cl}
\multicolumn{2}{l}{ (\sigma^{G_k}_t\otimes\alpha_t)\Big( (\Psi_n^{(l)})(b) \Big)(gG_k) } \\ 
=& \alpha_t\big(\Psi^{(l)}_n(b)(t^{-1}gG_k) \big) \\
\stackrel{\eqref{e:rho(b)}}{=}_{\makebox[0pt]{\footnotesize\hspace{1mm}$\rho_n(b)$}} & \alpha_t\big( \Theta_n^{(l)}(b)(t^{-1}gG_k) \big)\\
\stackrel{\eqref{e:Theta}}{=}& \dst \sum_{h\in H_k} \mu^{(l)}(\pi_N(t^{-1}g)h)\cdot (\alpha_{g\sigma(h)}\circ\kappa_n^{(l)}\circ\gamma_{t^{-1}g\sigma(h)}^{-1})(b) \\
\stackrel{\eqref{e:decay-1},\eqref{e:decay-3}}{=}_{\makebox[0pt]{\footnotesize\hspace{-10.5mm}$\eps$}} & \dst \sum_{h\in H_k} \mu^{(l)}(\pi_N(g)h)\cdot (\alpha_{g\sigma(h)}\circ\kappa_n^{(l)}\circ\gamma_{t^{-1}g\sigma(h)}^{-1})(b) \\
\phantom{--}\stackrel{\eqref{e:rho(b)},\eqref{e:Theta}}{=}_{\makebox[0pt]{\footnotesize\hspace{-1mm}$\rho_n(\gamma_t(b))$}}\phantom{--} & \Psi_n^{(l)}(\gamma_t(b))(gG_k).
\end{array}
\]
Note that as $K_{G}$ contains a representative for every $G_k$-orbit in $G$, these approximations carry over to the $\|\cdot\|_\infty$-norm of the involved functions.
Using \eqref{e:rho-est}, we obtain for all $b\in\CD$ with $\norm b\norm\leq 1$ that
\begin{equation} \label{e:Psi-approx-equivariant}
\limsup_{n\to\infty}~\max_{t\in K}~\|(\sigma^{G_k}_t\otimes\alpha_t)(\Psi_n^{(l)})(b)-(\Psi_n^{(l)}\circ\gamma_t)(b)\| ~\leq~ \eps.
\end{equation}
Next, we wish to show that for $l_1\neq l_2$, the c.p.c.\ maps $\Psi^{(l_1)}_n$ and $\Psi^{(l_2)}_n$ have approximately commuting ranges as $n\to\infty$. 
Let $g_1,g_2\in K_{G}$ and $b\in \CD$ with $\norm b\norm\leq 1$ be given. 
Then we compute 
\[
\begin{array}{cl}
\multicolumn{2}{l}{ \big\| \big[ \Psi^{(l_1)}_n(b)(g_1G_k), \Psi^{(l_2)}_n(b)(g_2G_k) \big] \big\| } \\
\stackrel{\eqref{e:rho(b)}}{=}_{\makebox[0pt]{\footnotesize\hspace{2mm}$4\rho_n(b)$}} & \big\| \big[ \Theta^{(l_1)}_n(b)(g_1), \Theta^{(l_2)}_n(b)(g_2) \big] \big\| \\
\hspace{-3mm}\stackrel{\eqref{e:decay-1},\eqref{e:Theta},\eqref{e:decay-positiv}}{\leq}\hspace{-3mm}& \dst \max_{h_1, h_2\in F_k}~\Big\| \Big[ (\alpha_{g_1\sigma(h_1)}\circ\kappa_n^{(l_1)}\circ\gamma_{g_1\sigma(h_1)}^{-1})(b), (\alpha_{g_2\sigma(h_2)}\circ\kappa_n^{(l_2)}\circ\gamma_{g_2\sigma(h_2)}^{-1})(b) \Big] \Big\| \\
=& \dst \max_{h_1, h_2\in F_k}~\Big\| \Big[ (\kappa_n^{(l_1)}\circ\gamma_{g_1\sigma(h_1)}^{-1})(b), (\alpha_{\sigma(h_1)^{-1}g_1^{-1}g_2\sigma(h_2)}\circ\kappa_n^{(l_2)}\circ\gamma_{g_2\sigma(h_2)}^{-1})(b) \Big] \Big\|
\end{array}
\]
In particular, we obtain for every contraction $b\in\CD$ that
\begin{equation} \label{e:Psi-approx-commuting}
\begin{array}{cl}
\multicolumn{2}{l}{ \dst\max_{g_1,g_2\in K_G}~ \big\| \big[ \Psi^{(l_1)}_n(b)(g_1G_k), \Psi^{(l_2)}_n(b)(g_2G_k) \big] \big\| }  \\
\leq & \dst \max_{g_1,g_2\in K_G}~\max_{h_1, h_2\in F_k}~\Big\| \Big[ (\kappa_n^{(l_1)}\circ\gamma_{g_1\sigma(h_1)}^{-1})(b), (\alpha_{\sigma(h_1)^{-1}g_1^{-1}g_2\sigma(h_2)}\circ\kappa_n^{(l_2)}\circ\gamma_{g_2\sigma(h_2)}^{-1})(b) \Big] \Big\| \\
& +4\rho_n(b)\\
\hspace{-3mm}\stackrel{\eqref{e:rho-est},\eqref{e:kappa-5}}{\longrightarrow}\hspace{-3mm} & 0.
\end{array}
\end{equation}
Here we have used that the convergence in \eqref{e:kappa-5} automatically holds uniformly when quantifying over $b_1, b_2$ belonging to some compact subset in $\CD$, in this case
\[
b_1, b_2\in \set{ \gamma^{-1}_g(b) \mid g\in K_G\cdot\sigma(F_k) }.
\]

In exactly the same fashion, one also computes
\begin{equation} \label{e:Psi-approx-central}
\big\| \big[ \Psi^{(l)}_n(b), a \big] \big\| \longrightarrow 0
\end{equation} 
for all $l=0,\dots,d$, $b\in\CD$, and $a\in A$, by using \eqref{e:kappa-1} in place of \eqref{e:kappa-5}.

Next, we wish to show that for each $l=0,\dots,d$, the c.p.c.\ maps $\Psi^{(l)}_n$ behave approximately like order zero maps.
Let $g\in K_{G}$. Choose the unique element $h_0\in F_k$ with $\mu^{(l)}(\pi_N(g)h_0)>0$. 
Then it follows for every $b_1, b_2\in \CD$ that
\[
\begin{array}{ll}
\multicolumn{2}{l}{ \Theta^{(l)}_n(b_1)(g)\cdot \Theta^{(l)}_n(b_2)(g) } \\
=& \mu^{(l)}(\pi_N(g)h_0)^2\cdot (\alpha_{g\sigma(h_0)}\circ\kappa_n^{(l)}\circ\gamma_{g\sigma(h_0)}^{-1})(b_1)\cdot(\alpha_{g\sigma(h_0)}\circ\kappa_n^{(l)}\circ\gamma_{g\sigma(h_0)}^{-1})(b_2) \\
=& \mu^{(l)}(\pi_N(g)h_0)^2\cdot \alpha_{g\sigma(h_0)}\Big( (\kappa_n^{(l)}\circ\gamma_{g\sigma(h_0)}^{-1})(b_1) \cdot (\kappa_n^{(l)}\circ\gamma_{g\sigma(h_0)}^{-1})(b_1) \Big) .
\end{array}
\]
It follows from this calculation that
\[
\begin{array}{cl}
\multicolumn{2}{l}{ \big\| \Theta^{(l)}_n(b_1)\cdot \Theta^{(l)}_n(b_2) - \Theta^{(l)}_n(b_1b_2)\cdot \Theta^{(l)}_n(\eins)  \big\|_{\infty, K_G} } \\
\leq & \dst \max_{s\in K_{G_k}\cdot \sigma(F_k)} \big\| (\kappa_n^{(l)}\circ\gamma_{s}^{-1})(b_1) \cdot (\kappa_n^{(l)}\circ\gamma_{s}^{-1})(b_1) \\
& \phantom{----}- (\kappa_n^{(l)}\circ\gamma_{s}^{-1})(b_1b_2) \cdot (\kappa_n^{(l)}\circ\gamma_{s}^{-1})(\eins) \big\| \\
\stackrel{\eqref{e:decay-2},\eqref{e:decay-3}}{\longrightarrow} & 0.
\end{array}
\]
As $K_G$ contains a representative of every $G_k$-orbit in $G$, it follows from \eqref{e:Theta-Psi} that
\begin{equation} \label{e:Psi-approx-order-zero}
\big\| \Psi^{(l)}_n(b_1)\cdot \Psi^{(l)}_n(b_2) - \Psi^{(l)}_n(b_1b_2)\cdot \Psi^{(l)}_n(\eins)  \big\| \longrightarrow 0
\end{equation}
for every $b_1, b_2\in\CD$.

Next, we wish to show that the completely positive sum $\sum_{l=0}^d \Psi^{(l)}_n$ behaves approximately like a u.c.p.\ map upon multiplication with an element of $\eins\otimes A$, as $n\to\infty$. 
Let $g\in K_{G}$. 
We have 
\[
\begin{array}{cl}
\multicolumn{2}{l}{ \dst \Theta^{(l)}_n(\eins)(g) } \\
\stackrel{\eqref{e:Theta},\eqref{e:K-überdeckung}}{=}& \dst \sum_{h\in F_k} \mu^{(l)}(\pi_N(g) h)\cdot (\alpha_{g\sigma(h)}\circ\kappa_n^{(l)}\circ\gamma_{g\sigma(h)}^{-1})(\eins) \\
=& \dst \sum_{h\in F_k} \mu^{(l)}(\pi_N(g) h)\cdot (\alpha_{g\sigma(h)}\circ\kappa_n^{(l)})(\eins).
\end{array}
\]
It follows for all $a\in A$ that
\[
\begin{array}{cl}
\multicolumn{2}{l}{ \dst \max_{g\in K_G}~\Big\| \Big( \eins - \sum_{l=0}^d\Theta^{(l)}_n(\eins)(g) \Big) \cdot a \Big\| } \\
\leq & \dst \max_{g\in K_G}~ (d+1)\cdot \max_{l}~\max_{h\in F_k}~\|(\alpha_{\sigma(h)}\circ\kappa^{(l)}_n)(\eins)-\kappa^{(l)}_n(\eins)\| \\
& \dst + \Big\|  \Big(\eins - \sum_{l=0}^d \sum_{h\in F_k} \mu^{(l)}(\pi_N(g) h)\cdot (\alpha_{g}\circ\kappa_n^{(l)})(\eins) \Big)\cdot a \Big\| \\
\stackrel{\eqref{e:decay-2}}{=}& \dst \max_{g\in K_G}~ (d+1)\cdot \max_{l}~\max_{h\in F_k}~\|(\alpha_{\sigma(h)}\circ\kappa^{(l)}_n)(\eins)-\kappa^{(l)}_n(\eins)\| \\
& \dst + \Big\|  \Big( \sum_{l=0}^d \sum_{h\in F_k} \mu^{(l)}(\pi_N(g) h)\cdot \big( \eins - (\alpha_{g}\circ\kappa_n^{(l)})(\eins) \big) \Big)\cdot a \Big\| \\
\stackrel{\eqref{e:decay-1}}{\leq} & \dst \max_{g\in K_G}~\Big( (d+1)\cdot \max_{l}~\max_{h\in F_k}~\|(\alpha_{\sigma(h)}\circ\kappa^{(l)}_n)(\eins)-\kappa^{(l)}_n(\eins)\| \\
& \dst + (d+1)\cdot \max_{l}~ \| \big(\eins-\kappa^{(l)}_n(\eins) \big)\cdot\alpha_g^{-1}(a) \| \Big) \\
\stackrel{\eqref{e:kappa-3},\eqref{e:kappa-4}}{\longrightarrow}\hspace{-2mm} & 0.
\end{array}
\]
Since $K_G$ contains every $G_k$-orbit in $G$, it follows from \eqref{e:Theta-Psi} that
\begin{equation} \label{e:Psi-approx-jointly-unital}
\Big\| \Big( \eins - \sum_{l=0}^d\Psi^{(l)}_n(\eins) \Big) \cdot (\eins\otimes a) \Big\| \to 0 \quad\text{for all}~a\in A.
\end{equation}

Let us now summarize everything we have obtained so far. 
The c.p.c.\ maps $\Psi^{(l)}_n: \CD\to\CC(G/G_k,A)$, for $l=0,\dots,d$ and $n\in\IN$ satisfy the following properties for all $b,b_1,b_2\in\CD$ and $a\in A$:
\begin{equation} \label{e:psi-1}
\| [\Psi_n^{(l)}(b),\eins\otimes a] \| \longrightarrow 0;
\end{equation}
\begin{equation} \label{e:psi-2}
\limsup_{n\to\infty}~ \max_{t\in K}~ \| \big( (\sigma^{G_k}\otimes\alpha)_t\circ\Psi^{(l)}_n \big)(b)-(\Psi^{(l)}_n\circ\gamma_t)(b)\|\leq \eps;
\end{equation}
\begin{equation} \label{e:psi-3}
\| [\Psi^{(l_1)}_n(b),\Psi^{(l_2)}_n(b)] \| \longrightarrow 0 \quad\text{for all } l_1\neq l_2;
\end{equation}
\begin{equation} \label{e:psi-4}
\| \Psi^{(l)}_n(b_1)\cdot\Psi^{(l)}_n(b_2)-\Psi^{(l)}_n(b_1b_2)\cdot\Psi^{(l)}_n(\eins)  \| \longrightarrow 0;
\end{equation}
\begin{equation} \label{e:psi-5}
\Big\| \Big( \eins- \sum_{l=0}^d \Psi^{(l)}_n(\eins) \Big)\cdot \eins\otimes a \Big\| \longrightarrow 0.
\end{equation}
Note that $k$, and thus the codomain of $\Psi^{(l)}_n$, had to be chosen depending on $\eps$ and $K\subseteq G$.
However, we have canonical (equivariant) inclusions $\CC(G/G_k, A)\subseteq\CC(G/\CS_0, A)$, which we may compose our maps with.
It is then clear that the same properties as in \eqref{e:psi-1} up to \eqref{e:psi-5} hold, where we replace the action $\sigma^{G_k}: G\curvearrowright\CC(G/G_k)$ by $\sigma^{\CS_0}: G\curvearrowright\CC(G/\CS_0)$. 

Since $A$ and $B$ are separable and $G$ is second-countable, we can let the tolerance $\eps$ go to zero, let the set $K\subseteq G$ get larger and apply a diagonal sequence argument.
Putting the appropriate choices of c.p.c.\ maps into a single sequence, we can thus obtain c.p.c.\ maps
\[
\psi^{(l)}: B\to \big( \CC(G/\CS_0)\otimes A \big)_{\infty},\quad l=0,\dots,d
\]
that satisfy the following properties for all $g\in G$, $a\in A$, and $b, b_1, b_2\in\CD$:
\begin{equation} \label{e:psii-1}
[\psi^{(l)}(b),\eins\otimes a]=0;
\end{equation}
\begin{equation} \label{e:psii-2}
(\sigma^{\CS_0}\otimes\alpha)_g\circ\psi^{(l)}=\psi^{(l)}\circ\gamma_g;
\end{equation}
\begin{equation} \label{e:psii-3}
[\psi^{(l_1)}(b),\psi^{(l_2)}(b)]=0 \quad\text{for all}~l_1\neq l_2;
\end{equation}
\begin{equation} \label{e:psii-4}
 \psi^{(l)}(b_1)\cdot\psi^{(l)}(b_2) = \psi^{(l)}(b_1b_2)\cdot\psi^{(l)}(\eins);
\end{equation}
\begin{equation} \label{e:psii-5}
\Big( \eins- \sum_{l=0}^d \psi^{(l)}(\eins) \Big)\cdot \eins\otimes a = 0.
\end{equation}
Since $\gamma: G\curvearrowright \CD$ is point-norm continuous, \eqref{e:psii-2} implies that the image of each map $\psi^{(l)}$ is in the continuous part $\big( \CC(G/\CS_0)\otimes A \big)_{\infty,\sigma^{\CS_0}\otimes\alpha}$.
In fact it is in the relative commutant of $\eins\otimes A$ by \eqref{e:psii-1}, but then also automatically in the relative commutant of all of $\CC(G/\CS_0)\otimes A$.
This allows us to define equivariant maps
\[
\zeta^{(l)}: \CD\to F_{\infty,\sigma^{\CS_0}\otimes\alpha}\big( \CC(G/\CS_0)\otimes A \big), \quad \zeta^{(l)}(b)=\psi^{(l)}(b)+\ann(\CC(G/\CS_0)\otimes A)
\]
for all $l=0,\dots,d$.
Then \eqref{e:psii-3} implies that these maps have commuting ranges, \eqref{e:psii-4} implies that they are c.p.c.\ order zero, and finally \eqref{e:psii-5} implies the equation $\dst\sum_{l=0}^d \zeta^{(l)}(\eins)=\eins$.

By virtue of \autoref{prop:multi-cones-property} and \autoref{rem:multi-cones-join}, this gives rise to a unital equivariant $*$-homomorphism
\[
\big( \CD^{\star (d+1)}, \gamma^{\star(d+1)} \big) \to \big( F_{\infty,\sigma^{\CS_0}\otimes\alpha}\big( \CC(G/\CS_0)\otimes A \big), (\widetilde{\sigma^{\CS_0}\otimes\alpha})_\infty \big).
\]
As $\gamma$ is unitarily regular, it follows from \autoref{lem:multi-cones-absorption} that $\gamma^{\star(d+1)}$ is a $\gamma$-absorbing action.
Applying \autoref{lem:reindexation} yields that $\sigma^{\CS_0}\otimes\alpha$ is $\gamma$-absorbing, which finishes the proof.
\end{proof} 

Now we are in a position to prove \autoref{thm:dimrokc-main}:

\begin{proof}[Proof of \autoref{thm:dimrokc-main}]
Let $\alpha: G\curvearrowright A$ and $\gamma: G\curvearrowright\CD$ be the two actions as in the assumption.
Let also $N\subset G$, $H_k\subset G/N$, and $G_k\subset G$ be subgroups as specified in the statement, and denote $\CS_1=(H_k)_k$ as a sequence of subgroups in $G/N$, and $\CS_0=(G_k)_k$ as a sequence of subgroups in $G$.

Suppose $\asdim(\square_{\CS_1} H_1)<\infty$ and $s:=\dimrokc(\alpha,\CS_0)<\infty$.
Using the latter, \autoref{lem:dimrokc-eq}\ref{lem:dimrokc-eq:4} implies that the equivariant embedding 
\[
\id_A\otimes\eins: (A,\alpha) \to \big( A\otimes\CC(G/\CS_0)^{\star (s+1)}, \alpha\otimes (G\textup{-shift}) \big)
\] 
is $G$-equivariantly sequentially split.
By \autoref{lem:reindexation}, in order to show that $\alpha$ is $\gamma$-absorbing, it suffices to show that the $G$-\cstar-algebra $A\otimes\CC(G/\CS_0)^{\star (s+1)}$ is $\gamma$-absorbing.
We will show this via induction on $s$.

For $s=0$, the claim is precisely \autoref{lem:main-technical}, and in particular it holds because we assumed $\asdim(\square_{\CS_1} H_1)<\infty$.

Given $s\geq 1$, assume that the claim holds for $s-1$.
It follows by \autoref{rem:multi-cones-extension} that there is an extension of $G$-\cstar-algebras of the form
\[
\xymatrix{
0 \ar[r] & J^{(s)} \ar[r] & A\otimes\CC(G/\CS_0)^{\star (s+1)} \ar[r] & Q^{(s)} \ar[r] & 0,
}
\]
where
\[
J^{(s)} = A\otimes \CC_0(0,1)\otimes \CC(G/\CS_0)\otimes \CC(G/\CS_0)^{\star s}
\]
and
\[
Q^{(s)} = A\otimes\Big( \CC(G/\CS_0)\oplus \CC(G/\CS_0)^{\star s} \Big).
\]
By the induction hypothesis, both the kernel and the quotient of this extension are $\gamma$-absorbing $G$-\cstar-algebras, and therefore so is the middle by \autoref{thm:ssa-ext}.
This finishes the induction step and the proof.
\end{proof}

\begin{rem}
We remark that the statement of the main result holds verbatim for cocycle actions instead of genuine actions.
Note that the concept of Rokhlin dimension makes sense for cocycle actions with the same definition, since there is still a natural genuine action induced on the central sequence algebra.
If $(\alpha,w): G\curvearrowright A$ is a cocycle action on a separable \cstar-algebra, then $(\alpha\otimes\id_\CK,w\otimes\eins): G\curvearrowright A\otimes\CK$ is cocycle conjugate to a genuine action by the Packer--Raeburn stabilization trick \cite{PackerRaeburn89}.
Since both Rokhlin dimension and absorption of a semi-strongly self-absorbing action are invariants under stable (cocycle) conjugacy, the statement of \autoref{thm:dimrokc-main} follows for cocycle actions.
\end{rem}


\section{Some applications}
\label{sec:applications}

Let us now discuss some immediate applications of the main result.
First we wish to point out that the following result arises as a special case.

\begin{cor} \label{cor:dimrokc-cor}
Let $G$ be a second-countable, locally compact group. 
Let $\CS=(H_n)_n$ be a residually compact approximation consisting of normal subgroups of $G$ with 
\[
\asdim(\square_\CS H_1)<\infty.
\]
Let $A$ be a separable \cstar-algebra and $\CD$ a strongly self-absorbing \cstar-algebra with $A\cong A\otimes\CD$. 
Let $\alpha: G\curvearrowright A$ be an action with 
\[
\dimrokc(\alpha, \CS)<\infty.
\]
Then $\alpha\vscc\alpha\otimes\gamma$ for all semi-strongly self-absorbing actions $\gamma: G\curvearrowright\CD$.
\end{cor}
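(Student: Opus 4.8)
The plan is to deduce the statement from \autoref{thm:dimrokc-main} applied with the trivial normal subgroup $N=\set{1}$. With this choice the quotient map $\pi_N$ is the identity, $G/N=G$, and the given residually compact approximation $\CS=(H_n)_n$ of $G$ serves simultaneously as $\CS_1$ and, since then $G_n=\pi_N^{-1}(H_n)=H_n$, also as $\CS_0$ in the notation of \autoref{thm:dimrokc-main}. Under this specialization the hypothesis $\alpha|_N\cc(\alpha\otimes\gamma)|_N$ becomes the assertion that $A$ and $A\otimes\CD$ are isomorphic as \cstar-algebras --- cocycle conjugacy of actions of the trivial group being the same as isomorphism of the underlying \cstar-algebras --- which is part of our hypotheses. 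The two remaining quantitative assumptions of \autoref{thm:dimrokc-main}, namely $\asdim(\square_\CS H_1)<\infty$ and $\dimrokc(\alpha,\CS)<\infty$, are exactly what we assumed. Hence \autoref{thm:dimrokc-main} applies and yields $\alpha\cc\alpha\otimes\gamma$.

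To strengthen this to very strong cocycle conjugacy I would invoke \autoref{thm:equi-McDuff}: once $\gamma$ is known to be unitarily regular, conditions \ref{equi-McDuff2} and \ref{equi-McDuff5} there are equivalent, so the relation $\alpha\cc\alpha\otimes\gamma$ upgrades automatically to $\alpha\vscc\alpha\otimes\gamma$, which is the assertion of the corollary.

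The only point requiring care --- and hence the step I would treat most carefully --- is that both \autoref{thm:dimrokc-main} and the equivalence just cited presuppose that the model action $\gamma$ is unitarily regular, while the corollary only assumes $\gamma$ to be semi-strongly self-absorbing. I would therefore first record that a semi-strongly self-absorbing action on a strongly self-absorbing \cstar-algebra is automatically unitarily regular. For $\CD=\IC$ this is vacuous, since then $\gamma$ is the trivial action and $\alpha\otimes\gamma$ is conjugate to $\alpha$; for $\CD\neq\IC$ one uses that $\CD$ is $\CZ$-stable, reduces via strong cocycle conjugacy to a strongly self-absorbing model action on $\CD$, and deduces equivariant $\CZ$-stability of $\gamma$, whence unitary regularity follows from the criteria recorded in \cite[Section 2]{Szabo18ssa2} (cf.\ \autoref{rem:unitary-reg}). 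With this established, the corollary is a direct consequence of \autoref{thm:dimrokc-main} together with \autoref{thm:equi-McDuff}.
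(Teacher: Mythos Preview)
Your overall strategy --- specialize \autoref{thm:dimrokc-main} to $N=\set{1}$ and then upgrade cocycle conjugacy to very strong cocycle conjugacy via \autoref{thm:equi-McDuff}\ref{equi-McDuff5} --- is exactly the paper's approach, and the identification of $\CS_1=\CS_0=\CS$ and of the hypothesis $\alpha|_N\cc(\alpha\otimes\gamma)|_N$ with $A\cong A\otimes\CD$ is correct.

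The one point where you diverge from the paper is the treatment of unitary regularity, and your argument there is not complete. You assert that a semi-strongly self-absorbing action on a strongly self-absorbing $\CD$ is automatically equivariantly $\CZ$-stable, but you do not say why; knowing that $\CD\cong\CD\otimes\CZ$ as \cstar-algebras and that $\gamma$ is strongly cocycle conjugate to a strongly self-absorbing model does not by itself yield $\gamma\cc\gamma\otimes\id_\CZ$. \autoref{rem:unitary-reg} only records the implication from equivariant $\CZ$-stability to unitary regularity, not the converse or your claimed automatic $\CZ$-stability. The paper sidesteps this entirely: rather than proving $\gamma$ is unitarily regular, it replaces $\gamma$ by $\gamma\otimes\id_\CZ$, which is manifestly equivariantly $\CZ$-stable (hence unitarily regular) and again semi-strongly self-absorbing, applies \autoref{thm:dimrokc-main} to that action, and then notes that $(\gamma\otimes\id_\CZ)$-absorption implies $\gamma$-absorption. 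This trick is both shorter and self-contained; you should adopt it instead of the unproved assertion.
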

\begin{proof}
Let $\gamma: G\curvearrowright\CD$ be a semi-strongly self-absorbing action.
Since $\CD\cong\CD\otimes\CZ$ by \cite{Winter11}, we may replace $\gamma$ with $\gamma\otimes\id_\CZ$ for the purpose of showing the claim, as $\gamma\otimes\id_\CZ$ is again semi-strongly self-absorbing and every $(\gamma\otimes\id_\CZ)$-absorbing action is $\gamma$-absorbing. 
So let us simply assume $\gamma\cc\gamma\otimes\id_\CZ$.
By \autoref{rem:unitary-reg}, we may thus assume that $\gamma$ is unitarily regular.
The claim then follows directly from \autoref{thm:dimrokc-main} applied to the case $N=\set{1}$.
Note that one automatically has absorption with respect to very strong cocycle conjugacy by virtue of \autoref{thm:equi-McDuff}\ref{equi-McDuff5}.
\end{proof}

Note that the results below in part refer to Rokhlin dimension without commuting towers, as defined in \cite[Section 5]{SzaboWuZacharias17}.
For the Rokhlin dimension zero case, the commuting tower assumption is vacuous.

\begin{example} \label{ex:uni-UHF}
Let $\CQ$ denote the universal UHF algebra.
Let $\Gamma$ be a countable, discrete group and $H\subset\Gamma$ a normal subgroup with finite index.
There exists a strongly self-absorbing action $\gamma: G\curvearrowright\CQ$ with $\dimrok(\gamma,H)=0$.
\end{example}
\begin{proof}
Such an action is constructed as part of \cite[Remark 10.8]{SzaboWuZacharias17}.
Namely, consider the left-regular representation $\lambda^{G/H}: G/H\to\CU(M_{|G:H|})$, consider the quotient map $\pi_H: G\to G/H$, and define
\[
\gamma_g=\id_\CQ\otimes\bigotimes_\IN \ad(\lambda^{G/H}(\pi_H(g)))
\]
as an action on $\CQ\cong\CQ\otimes M_{[G:H]}^{\otimes\infty}$.
As the diagonal embedding $\CC(G/H)\subset M_{[G:H]}$ is equivariant, it follows that $\dimrok(\gamma,H)=0$.
By \cite[Proposition 6.3]{Szabo18ssa2}, such an action is strongly self-absorbing.
\end{proof}

This in turn has the following consequence regarding the dimension-reducing effect of strongly self-absorbing \cstar-algebras.

\begin{cor} \label{cor:reducing-dimrok}
Let $\Gamma$ be a countable, discrete, residually finite group that has some box space with finite asymptotic dimension.
Let $\alpha: \Gamma\curvearrowright A$ be an action on a separable \cstar-algebra with $\dimrokc(\alpha)<\infty$.
\begin{enumerate}[label=\textup{(\arabic*)},leftmargin=*]
\item If $A\cong A\otimes\CQ$, then $\dimrok(\alpha)=0$. \label{cor:reducing-dimrok:1}
\item If $A\cong A\otimes\CZ$, then $\dimrok(\alpha)\leq 1$.  \label{cor:reducing-dimrok:2}
\end{enumerate}
\end{cor}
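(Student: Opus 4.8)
The plan is to derive both statements from \autoref{cor:dimrokc-cor} by transporting a small Rokhlin-dimension bound from an explicit model action to $\alpha$ along the (very strong) cocycle conjugacy it supplies. Throughout, fix a residually finite approximation $\CS=(H_n)_n$ of $\Gamma$ with $\asdim(\square_\CS\Gamma)<\infty$, which exists by assumption; since $\dimrokc(\alpha)<\infty$ we in particular have $\dimrokc(\alpha,\CS)<\infty$, so \autoref{cor:dimrokc-cor} applies to $\alpha$. Two routine reductions will be used for both parts. First, it suffices to bound $\dimrok(\alpha,H')$ for every \emph{normal} finite-index subgroup $H'\subseteq\Gamma$: an arbitrary finite-index $H$ has a normal core $H'$ of finite index, $\Gamma/H$ is an equivariant quotient of $\Gamma/H'$, and precomposing any Rokhlin system relative to $H'$ with the dual equivariant unital embedding $\CC(\Gamma/H)\hookrightarrow\CC(\Gamma/H')$ gives one relative to $H$. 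Second, if $\gamma\colon\Gamma\curvearrowright\CD$ is a semi-strongly self-absorbing action on a strongly self-absorbing \cstar-algebra with $A\cong A\otimes\CD$, then \autoref{cor:dimrokc-cor} gives $\alpha\vscc\alpha\otimes\gamma$, whence $\dimrok(\alpha,H')=\dimrok(\alpha\otimes\gamma,H')\leq\dimrok(\gamma,H')$; here the equality uses that $\dimrok(\cdot,H')$ is a cocycle-conjugacy invariant, and the inequality follows by composing equivariant c.p.c.\ order zero maps $\CC(\Gamma/H')\to F_{\infty,\gamma}(\CD)$ witnessing $\dimrok(\gamma,H')$ with the canonical equivariant unital $*$-homomorphism $F_{\infty,\gamma}(\CD)\to F_{\infty,\alpha\otimes\gamma}(A\otimes\CD)$ induced by the second tensor leg.

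For part~\ref{cor:reducing-dimrok:1} I would now simply invoke \autoref{ex:uni-UHF}: for a given normal finite-index $H'\subseteq\Gamma$ it supplies a strongly self-absorbing action $\gamma\colon\Gamma\curvearrowright\CQ$ with $\dimrok(\gamma,H')=0$. As $A\cong A\otimes\CQ$, the second reduction yields $\dimrok(\alpha,H')\leq\dimrok(\gamma,H')=0$, hence $\dimrok(\alpha,H)=0$ for every finite-index $H\subseteq\Gamma$ by the first reduction, and therefore $\dimrok(\alpha)=0$.

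For part~\ref{cor:reducing-dimrok:2} the same scheme reduces everything to producing, for each normal finite-index $H'\subseteq\Gamma$, a semi-strongly self-absorbing action $\gamma\colon\Gamma\curvearrowright\CZ$ with $\dimrok(\gamma,H')\leq 1$; I expect this to be the main obstacle, since $\CZ$ is projectionless and hence \autoref{ex:uni-UHF} has no verbatim analogue. A plausible construction realises $\CZ$ equivariantly as an inductive limit of prime dimension drop algebras $M_p\star M_q$ carrying join actions (cf.\ \autoref{nota:multi-cones-actions}) built from permutation representations in the spirit of \autoref{ex:uni-UHF}, with connecting maps arranged so as to collapse the tracial simplex while remaining $\Gamma$-equivariant; the cone map $\eta_1$ of \autoref{nota:multi-cones} then contributes one equivariant order zero tower and the complementary summand $\eins-\eta_1(\eins)$, which is supported over a single dimension drop, contributes a second, forcing $\dimrok(\gamma,H')\leq 1$. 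A more self-contained alternative first applies \autoref{cor:dimrokc-cor} with the semi-strongly self-absorbing action $\gamma=\id_\CZ$, so that $\alpha\vscc\alpha\otimes\id_\CZ$ and $\alpha$ is equivariantly $\CZ$-stable, and then compresses the commuting equivariant c.p.c.\ order zero towers witnessing finiteness of $\dimrokc(\alpha,H')$ down to two: one keeps a single cone-type tower and folds the remaining commuting towers into a single order zero map using the dimension-drop structure supplied by the equivariant $\CZ$-absorption inside $F_{\infty,\alpha}(A)$, the obstruction to folding all the way down to one tower being exactly the one-dimensionality of the dimension drop. Carrying out this compression carefully is the heart of the matter; once it is in place one gets $\dimrok(\alpha,H')\leq1$ for all normal finite-index $H'$, and hence $\dimrok(\alpha)\leq1$.
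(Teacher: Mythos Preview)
Your argument for part~\ref{cor:reducing-dimrok:1} is correct and essentially identical to the paper's; the reduction to normal finite-index subgroups is a reasonable way to make explicit what the paper leaves implicit.

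For part~\ref{cor:reducing-dimrok:2}, however, there is a genuine gap: neither of your two sketches is carried out, and you correctly identify that each would be ``the heart of the matter.'' Your first route (building a semi-strongly self-absorbing $\Gamma$-action on $\CZ$ with Rokhlin dimension at most one via an equivariant dimension-drop limit) is plausible in spirit but requires nontrivial work to verify both the self-absorption and the Rokhlin bound, none of which you provide. Your second route (compressing many commuting towers down to two using equivariant $\CZ$-stability) is too vague to evaluate; in particular you give no mechanism for how the ``one-dimensionality of the dimension drop'' actually performs the fold.

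The paper bypasses both difficulties with a much more direct trick that you have missed: rather than looking for a model action on $\CZ$, apply part~\ref{cor:reducing-dimrok:1} to the action $\alpha\otimes\id_\CQ$ on $A\otimes\CQ$ (which is legitimate since $A\otimes\CQ\cong A\otimes\CQ\otimes\CQ$ and $\dimrokc(\alpha\otimes\id_\CQ)\leq\dimrokc(\alpha)<\infty$), obtaining a \emph{single} approximately equivariant, approximately central order zero map $\CC(\Gamma/H)\to A\otimes\CQ$. Then invoke the well-known fact (Matui--Sato, Sato--White--Winter) that there exist two c.p.c.\ order zero maps $\psi_0,\psi_1\colon\CQ\to\CZ_\infty\cap\CZ'$ with $\psi_0(\eins)+\psi_1(\eins)=\eins$. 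Composing the single $\CQ$-valued tower with $\id_A\otimes\psi_{i,n}$ for suitable lifts $\psi_{i,n}$ yields exactly two approximately order zero towers in $A\otimes\CZ$ whose units approximately sum to one, witnessing $\dimrok(\alpha\otimes\id_\CZ)\leq 1$. Since $\alpha\cc\alpha\otimes\id_\CZ$ (a special case of \autoref{cor:dimrokc-cor} with $\gamma=\id_\CZ$), this gives $\dimrok(\alpha)\leq 1$. The point is that the ``two-colour'' input comes from the structure of $\CZ$ relative to $\CQ$, not from anything you have to build by hand, and the equivariance comes along for free because the $\psi_i$ land in the trivial tensor factor.
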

\begin{proof}
\ref{cor:reducing-dimrok:1} follows directly from \autoref{ex:uni-UHF} and \autoref{cor:dimrokc-cor}.

\ref{cor:reducing-dimrok:2}: We have $\alpha\cc\alpha\otimes\id_\CZ$, and there exist two c.p.c.\ order zero maps $\psi_0,\psi_1: \CQ\to\CZ_\infty\cap \CZ'$ with $\psi_0(\eins)+\psi_1(\eins)=\eins$; see \cite[Section 5]{MatuiSato14UHF} or \cite[Section 6]{SatoWhiteWinter15}.
Consider two sequences $\psi_{0,n}, \psi_{1,n}: \CQ\to\CZ$ of c.p.c.\ maps lifting $\psi_0$ and $\psi_1$. 

By \ref{cor:reducing-dimrok:1}, $\alpha\otimes\id_\CQ$ has Rokhlin dimension zero.
Given any subgroup $H\subset \Gamma$ with finite index, we can find c.p.c.\ order zero maps $\CC(\Gamma/H)\to A\otimes\CQ$, which are approximately equivariant, have approximately central image, and so that the image of the unit acts approximately like a unit on finite sets.
Once we compose such maps with $\id_A\otimes\psi_{i,n}$ for $i=0,1$ and large enough $n$, we may obtain two c.p.c.\ maps $\kappa_0, \kappa_1: C(\Gamma/H)\to A\otimes\CZ$, which are approximately equivariant, have approximately central image, and so that the element $\kappa_0(\eins)+\kappa_1(\eins)$ approximately acts like a unit on a given finite set in $A\otimes\CZ$.
But this is what is required by $\dimrok(\alpha)=\dimrok(\alpha\otimes\id_\CZ)\leq 1$; we leave the finer details to the reader as the proof is quite standard.
\end{proof}

\begin{rem}
The reason why the proof of \autoref{cor:reducing-dimrok}\ref{cor:reducing-dimrok:2} does not yield $\dimrokc(\alpha)\leq 1$ is due to the fact that the two order zero maps $\psi_0, \psi_1: \CQ\to\CZ_\infty$ can never have commuting ranges.
Indeed, this would imply the existence of a unital $*$-homomorphism $\CQ\to\CZ_\infty$ via \autoref{lem:multi-cones-absorption}, so it is impossible.
More concretely, \cite[Example 3.32]{GardellaHirshbergSantiago17} exhibits an example of a $\IZ_2$-action $\alpha$ on a Kirchberg algebra with $\dimrok(\alpha)=1$ and $\dimrokc(\alpha)=2$.
\end{rem}

\begin{cor} \label{cor:reducing-dimrok-X}
Let $\Gamma$ be a discrete, finitely generated, virtually nilpotent group.
Let $X$ be a compact metrizable space with finite covering dimension, and $\alpha: \Gamma\curvearrowright X$ a free action by homeomorphisms.
Then one has
\[
\dimrok\big( \alpha\otimes\id_{\CQ}: \Gamma\curvearrowright\CC(X)\otimes\CQ \big)=0
\]
and
\[
\dimrok\big( \alpha\otimes\id_\CZ: \Gamma\curvearrowright\CC(X)\otimes\CZ \big)\leq 1.
\]
\end{cor}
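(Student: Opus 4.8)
The plan is to obtain both statements as instances of \autoref{cor:reducing-dimrok}, applied to the tensor-stabilized actions $\alpha\otimes\id_\CQ$ on $\CC(X)\otimes\CQ$ and $\alpha\otimes\id_\CZ$ on $\CC(X)\otimes\CZ$. The first thing I would check is that $\Gamma$ is of the type required there: being finitely generated and virtually nilpotent, $\Gamma$ is residually finite, has polynomial growth by Gromov's theorem, and therefore has finite asymptotic dimension; moreover such a group is well known to admit a box space with finite asymptotic dimension (compare \cite{SzaboWuZacharias17} and the references given there). The coefficient algebras $\CC(X)\otimes\CQ$ and $\CC(X)\otimes\CZ$ are separable, and they satisfy $\CC(X)\otimes\CQ\cong(\CC(X)\otimes\CQ)\otimes\CQ$ and $\CC(X)\otimes\CZ\cong(\CC(X)\otimes\CZ)\otimes\CZ$ because $\CQ$ and $\CZ$ are strongly self-absorbing.

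The remaining hypothesis to verify is that $\alpha\otimes\id_\CQ$ and $\alpha\otimes\id_\CZ$ have finite Rokhlin dimension with commuting towers. For this I would first invoke the topological Rokhlin dimension estimate: a free action of a finitely generated virtually nilpotent group on a compact metrizable space of finite covering dimension has $\dimrokc<\infty$; this is precisely the content of the relevant result in \cite{SzaboWuZacharias17}. Thus $\dimrokc(\alpha)<\infty$ for $\alpha\colon\Gamma\curvearrowright\CC(X)$. To transport this to the stabilized actions I would observe that for any unital \cstar-algebra $D$ the assignment $a\mapsto a\otimes\eins_D$ on representing sequences induces a natural unital equivariant $*$-homomorphism from $F_{\infty,\alpha}(\CC(X))$ to $F_{\infty,\alpha\otimes\id_D}(\CC(X)\otimes D)$; composing a system of Rokhlin towers for $\alpha$ with this map preserves the order zero property, the commuting ranges, and the unit-sum relation, so $\dimrokc(\alpha\otimes\id_D)\leq\dimrokc(\alpha)<\infty$ for $D=\CQ$ and $D=\CZ$.

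With all hypotheses in place, I would finish by two applications of the corollary. \autoref{cor:reducing-dimrok}\ref{cor:reducing-dimrok:1}, applied to $\CC(X)\otimes\CQ$ with the action $\alpha\otimes\id_\CQ$, yields $\dimrok(\alpha\otimes\id_\CQ)=0$; and \autoref{cor:reducing-dimrok}\ref{cor:reducing-dimrok:2}, applied to $\CC(X)\otimes\CZ$ with the action $\alpha\otimes\id_\CZ$, yields $\dimrok(\alpha\otimes\id_\CZ)\leq 1$.

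The only genuinely non-trivial ingredient being imported here is the finiteness of the Rokhlin dimension with commuting towers of free actions of finitely generated virtually nilpotent groups on finite-dimensional spaces; this is the technical heart of the matter, but it is external to the present paper. Everything internal to the argument --- the group-theoretic facts, the $\CD$-stability of the coefficient algebras, and the monotonicity of $\dimrokc$ under tensoring with a trivial action --- is routine, so I do not expect any real obstacle in carrying out the plan.
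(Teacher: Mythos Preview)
Your proposal is correct and follows essentially the same route as the paper's proof: invoke the finiteness of Rokhlin dimension for free actions of such groups on finite-dimensional spaces from \cite{SzaboWuZacharias17}, then feed this into \autoref{cor:reducing-dimrok}. The paper compresses your steps 3--4 into the single remark that $\CC(X)$ is abelian (so finite $\dimrok$ automatically coincides with finite $\dimrokc$, and the towers embed into any tensor product unchanged), whereas you spell out the monotonicity $\dimrokc(\alpha\otimes\id_D)\leq\dimrokc(\alpha)$ explicitly; this is a difference of exposition only.
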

\begin{proof}
By \cite[Corollary 7.5]{SzaboWuZacharias17}, the action $\alpha: \Gamma\curvearrowright\CC(X)$ has finite Rokhlin dimension.\footnote{Strictly speaking, only the nilpotent case is proved there. The virtually nilpotent case follows from independent work of Bartels \cite[Section 1]{Bartels17}.}
Since the underlying \cstar-algebra is abelian, the claim follows from \autoref{cor:reducing-dimrok}.
\end{proof}


\section{Multi-flows on strongly self-absorbing Kirchberg algebras}
\label{sec:multiflows}

In this section, we shall study actions of $\IR^k$ on certain \cstar-algebras satisfying an obvious notion of the Rokhlin property.

\begin{nota}
For $k\geq 2$, we will refer to a continuous action of $\IR^k$ on a \cstar-algebra as a {\it multi-flow}.
Let $(e_j)_{1\leq j\leq k}$ be the standard basis of $\IR^k$.
Given $\alpha: \IR^k\curvearrowright A$, we will denote the {\it generating flows} $\alpha^{(j)}:\IR\curvearrowright A$ given by $\alpha^{(j)}_t = \alpha_{te_j}$, for $j=1,\dots,k$.
We then have $\alpha^{(j)}_{t_j}\circ\alpha^{(i)}_{t_i}=\alpha^{(i)}_{t_i}\circ\alpha^{(j)}_{t_j}$ for all $i,j=1,\dots,k$ and all $t_i, t_j\in\IR$.
We will also denote by $\alpha^{(\msout{j})}: \IR^{k-1}\curvearrowright A$ the action generated by the flows $(\alpha^{(i)})_{i\neq j}$.
We remark that $\alpha^{(j)}$ reduces naturally to a flow on the fixed point algebra $A^{{\alpha}^{(\msout{j})}}$.
\end{nota}

\begin{defi} \label{def:Rp-mf}
Let $A$ be a separable \cstar-algebra and $\alpha: \IR^k\curvearrowright A$ an action.
We say that $\alpha$ has the Rokhlin property, if $\dimrok(\alpha,p\IZ^k)=0$ for all $p\greater 0$.
\end{defi}

\begin{rem}
An obvious question regarding the above definition is whether this is the same as $\dimrok(\alpha)=0$ when $k\geq 2$, especially because this appears to be (a priori) much more difficult to check.
Nevertheless, this turns out to be case. 
Instead of giving a detailed proof here, let us just roughly sketch the basic idea.

The condition $\dimrok(\alpha)=0$ in the sense of \autoref{def:dimrokc} amounts to checking $\dimrok(\alpha,H)=0$ for every closed cocompact subgroup $H\subset\IR^k$, or in other words finding approximately equivariant unital embeddings from $\CC(\IR^k/H)$ into the central sequence algebra of $A$.
This only gets easier when we make $H$ smaller, so we may assume without loss of generality that $H$ is discrete.
Since $H$ is a free abelian group and is cocompact in $\IR^k$, it has a $\IZ$-basis $e_1,\dots,e_k\in H$.
We may approximate these elements by $f_1,\dots,f_k\in\IQ^k$, which are linearly independent over $\IQ$ and span another subgroup $H'$.
By using for example \autoref{lem:periodic-expect} we can then obtain approximately multiplicative and equivariant u.c.p.\ maps $\CC(\IR^k/H)\to \CC(\IR^k/H')$.
By the properties of central sequence algebras, we may thus assume without loss of generality that in fact $H\subseteq\IQ^k$.
Now the same argument as in \cite[Example 3.19]{SzaboWuZacharias17} allows one to see that $H$ contains a finite index subgroup of the form $n\IZ^k$ for some $n\in\IN$.
In summary, we obtain $\dimrok(\alpha, H)=0$ for arbitrary $H$ when we assume the Rokhlin property in the sense of \autoref{def:Rp-mf}.
\end{rem}

\begin{rem}
In the case of flows, i.e., the case $k=1$ above, \autoref{def:Rp-mf} coincides with Kishimoto's notion of the Rokhlin property from \cite{Kishimoto96_R}.
Let us for now denote by $\sigma^T: \IR\curvearrowright\CC(\IR/T\IZ)$ the action induced by the $\IR$-shift.
\end{rem}

\begin{prop} \label{prop:multi-Rokhlin}
Let $A$ be a separable \cstar-algebra and $\alpha: \IR^k\curvearrowright A$ an action.
The following are equivalent:
\begin{enumerate}[label={\textup{(\roman*)}},leftmargin=*]
\item $\alpha$ has the Rokhlin property; \label{multi-Rokhlin:1}
\item for every $j=1,\dots,k$ and every $p\greater 0$, there exists a unitary
\[
u\in F_{\infty,\alpha}(A)^{\tilde{\alpha}^{(\msout{j})}_\infty} \quad\text{such that}\quad \tilde{\alpha}^{(j)}_{\infty,t}(u)=e^{ipt}u,\quad t\in\IR;
\] \label{multi-Rokhlin:2}
\item for every $j=1,\dots,k$ and every $T\greater 0$, there exists an equivariant and unital $*$-homomorphism
\[
\big( \CC(\IR/T\IZ), \sigma^T\big) \longrightarrow \big( F_{\infty,\alpha}(A)^{\tilde{\alpha}^{(\msout{j})}_\infty}, \tilde{\alpha}^{(j)}_{\infty} \big).
\] \label{multi-Rokhlin:3}
\end{enumerate}
\end{prop}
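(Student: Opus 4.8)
The plan is to prove $\ref{multi-Rokhlin:2}\Leftrightarrow\ref{multi-Rokhlin:3}$ first, then $\ref{multi-Rokhlin:1}\Rightarrow\ref{multi-Rokhlin:3}$, and finally $\ref{multi-Rokhlin:3}\Rightarrow\ref{multi-Rokhlin:1}$. The equivalence $\ref{multi-Rokhlin:2}\Leftrightarrow\ref{multi-Rokhlin:3}$ is the Fourier-type observation that $\CC(\IR/T\IZ)$ is the universal unital \cstar-algebra generated by a single unitary $z$, on which the flow $\sigma^T$ acts by $\sigma^T_t(z)=e^{2\pi i t/T}z$. Hence, for any \cstar-algebra $B$ carrying a flow $\beta$, unital equivariant $*$-homomorphisms $(\CC(\IR/T\IZ),\sigma^T)\to(B,\beta)$ are in natural bijection with unitaries $u\in B$ satisfying $\beta_t(u)=e^{2\pi i t/T}u$. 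Specialising $(B,\beta)=\big(F_{\infty,\alpha}(A)^{\tilde{\alpha}^{(\msout{j})}_\infty},\tilde{\alpha}^{(j)}_\infty\big)$ and reparametrising $p=2\pi/T$ turns \ref{multi-Rokhlin:3} into \ref{multi-Rokhlin:2}.

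For $\ref{multi-Rokhlin:1}\Rightarrow\ref{multi-Rokhlin:3}$, fix $p>0$. By definition, $\dimrok(\alpha,p\IZ^k)=0$ amounts to the existence of a unital equivariant $*$-homomorphism $\Phi\colon(\CC(\IR^k/p\IZ^k),\IR^k\textup{-shift})\to(F_{\infty,\alpha}(A),\tilde{\alpha}_\infty)$ (an order zero map preserving the unit being a $*$-homomorphism, and the commuting-tower condition being vacuous for a single map). Under the identification $\IR^k/p\IZ^k\cong(\IR/p\IZ)^k$, the $j$-th coordinate projection induces an equivariant unital embedding $\CC(\IR/p\IZ)\hookrightarrow\CC(\IR^k/p\IZ^k)$ whose image is fixed by every generating flow except the $j$-th and carries $\sigma^p$ under the $j$-th. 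Precomposing $\Phi$ with this embedding produces a map as required in \ref{multi-Rokhlin:3} with $T=p$; letting $p$ range over $(0,\infty)$ covers all $T>0$.

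For $\ref{multi-Rokhlin:3}\Rightarrow\ref{multi-Rokhlin:1}$, fix $p>0$; in view of the above and the definition of Rokhlin dimension it suffices to produce pairwise commuting unitaries $u_1,\dots,u_k\in F_{\infty,\alpha}(A)$ such that $u_l$ is fixed by $\tilde{\alpha}^{(i)}_\infty$ for all $i\ne l$ and $\tilde{\alpha}^{(l)}_{\infty,t}(u_l)=e^{2\pi i t/p}u_l$, because $k$ such unitaries assemble, via the universal property of $\CC(\IR^k/p\IZ^k)\cong\CC(\IR/p\IZ)^{\otimes k}$, into a unital equivariant $*$-homomorphism $(\CC(\IR^k/p\IZ^k),\IR^k\textup{-shift})\to(F_{\infty,\alpha}(A),\tilde{\alpha}_\infty)$, which is precisely $\dimrok(\alpha,p\IZ^k)=0$. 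I would construct the $u_l$ one at a time: \ref{multi-Rokhlin:2} directly supplies $u_1$, and given pairwise commuting $u_1,\dots,u_m$ with the required behaviour, generating a separable $\tilde{\alpha}_\infty$-invariant subalgebra $C_m\subset F_{\infty,\alpha}(A)$, one invokes a reindexation argument (Kirchberg's $\eps$-test, in the spirit of the proof of \autoref{lem:dimrokc-eq}) to push the unitary supplied by \ref{multi-Rokhlin:2} for the index $m+1$ into the relative commutant $F_{\infty,\alpha}(A)\cap C_m'$ while retaining its equivariance behaviour, thereby obtaining $u_{m+1}$.

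The main obstacle is precisely this last reindexation step. Two unital subalgebras of $F_{\infty,\alpha}(A)$ need not commute, so one has to verify that $F_{\infty,\alpha}(A)\cap C_m'$ still witnesses condition \ref{multi-Rokhlin:2} for the index $m+1$, \emph{including} the partial fixed-point requirement with respect to the flows $\tilde{\alpha}^{(i)}_\infty$ with $i\ne m+1$. I expect this to be handled by arranging the reindexing homomorphisms to be $\IR^k$-equivariant, so that fixed points of the subactions $\tilde{\alpha}^{(\msout{m+1})}_\infty$ are automatically preserved; the remaining bookkeeping (separability of $\CC(\IR/T\IZ)$, second countability of $\IR^k$ and exhausting its compact subsets, approximate order zero and unitality estimates) is routine.
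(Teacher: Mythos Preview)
Your proposal is correct and follows essentially the same approach as the paper. The paper handles \ref{multi-Rokhlin:2}$\Leftrightarrow$\ref{multi-Rokhlin:3} via functional calculus (your Fourier-type observation), treats \ref{multi-Rokhlin:1}$\Rightarrow$\ref{multi-Rokhlin:3} by restricting the tensor factor embedding $\CC(\IR/T\IZ)\hookrightarrow\CC(\IR/T\IZ)^{\otimes k}\cong\CC(\IR^k/T\IZ^k)$, and for \ref{multi-Rokhlin:3}$\Rightarrow$\ref{multi-Rokhlin:1} simply appeals to a ``standard reindexing argument in the central sequence algebra'' to make the $k$ embeddings have pairwise commuting ranges; your inductive one-at-a-time construction with equivariant reindexation is exactly a spelled-out version of that appeal, and the concern you flag (preserving the $\tilde{\alpha}^{(\msout{m+1})}_\infty$-fixedness) is indeed resolved by arranging the reindexation to be $\IR^k$-equivariant, as you anticipated.
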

\begin{proof}
\ref{multi-Rokhlin:1}$\LRa$\ref{multi-Rokhlin:3}: Let $T\greater 0$.
One has a canonical equivariant isomorphism
\[
\big( \CC(\IR^k/T\IZ^k), \IR^k\textup{-shift} \big) \cong \big( \CC(\IR/T\IZ)^{\otimes k}, \sigma^{T,1}\otimes\dots\otimes\sigma^{T,k} \big),
\]
where $\sigma^{T,j}$ is the $\IR^k$-action on $\CC(\IR/T\IZ)$ where only the $j$-th component acts by the $\IR$-shift.
By definition, $\alpha$ having the Rokhlin property means that for every $T\greater 0$ the dynamical system on the left embeds into $\big( F_{\infty,\alpha}(A), \tilde{\alpha}_\infty \big)$.
So in particular, when \ref{multi-Rokhlin:1} holds, then one also obtains an embedding of $\big( \CC(\IR/T\IZ), \sigma^{T,j} \big)$ for every $j=1,\dots,k$, which implies \ref{multi-Rokhlin:3}.
Conversely, when \ref{multi-Rokhlin:3} holds, then for all $T\greater 0$ one has an embedding of $\big( \CC(\IR/T\IZ), \sigma^{T,j} \big)$ into $\big( F_{\infty,\alpha}(A), \tilde{\alpha}_\infty \big)$ for all $j=1,\dots,k$.
By applying a standard reindexing argument in the central sequence algebra, one may assume that these embeddings have pairwise commuting ranges for all $j=1,\dots,k$.
Therefore one obtains an embedding of the \cstar-dynamical system given by the tensor product of all $\big( \CC(\IR/T\IZ), \sigma^{T,j} \big)$, which we have seen to be the same as the dynamical system $\big( \CC(\IR^k/T\IZ^k), \IR^k\textup{-shift} \big)$.
In particular this implies \ref{multi-Rokhlin:1}.

\ref{multi-Rokhlin:2}$\LRa$\ref{multi-Rokhlin:3}: This follows directly from functional calculus.
A unitary $u$ as in \ref{multi-Rokhlin:2} gives rise to a unital equivariant $*$-homomorphism 
\[
\phi_u: \big( \CC(\IR/\frac{2\pi}{p}\IZ), \sigma^{\frac{2\pi}{p}}\big) \longrightarrow \big( F_{\infty,\alpha}(A)^{\tilde{\alpha}^{(\msout{j})}_\infty}, \tilde{\alpha}^{(j)}_{\infty} \big),\quad \phi_u(f)=f(u).
\]
Conversely, whenever $\phi$ is an arbitrary homomorphism between these two dynamical systems, then $u=\phi([t+\frac{2\pi}{p}\IZ\mapsto e^{ipt}])$ yields a unitary as required by \ref{multi-Rokhlin:2}.
\end{proof}

\begin{rem} \label{rem:Rk-box}
We note that for $G=\IR^k$, the sequence $H_n=(n!)\cdot\IZ^k$ yields a residually compact approximation in the sense of \autoref{def:res-cp-approx}.
Now it is well-known that $\square_{(H_n)_n} \IZ^k$ has finite asymptotic dimension $k$; see either \cite[Sections 2+3]{SzaboWuZacharias17} or better yet \cite{DelabieTointon17}.
In particular, \autoref{cor:dimrokc-cor} is applicable to $\IR^k$-actions that have finite Rokhlin dimension with commuting towers, and more specifically it is applicable to $\IR^k$-actions with the Rokhlin property.
\end{rem}

The following is the main result of this section.

\begin{theorem} \label{thm:multiflows}
Let $\CD$ be a strongly self-absorbing Kirchberg algebra.
Let $k\geq 1$ be a given natural number.
Then all continuous $\IR^k$-actions on $\CD$ with the Rokhlin property are semi-strongly self-absorbing and are mutually (very strongly) cocycle conjugate.
\end{theorem}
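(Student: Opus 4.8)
The plan is to argue by induction on $k$. For $k=1$ the statement is the special case of \cite{Szabo17R} concerning Rokhlin flows on strongly self-absorbing Kirchberg algebras, so assume $k\geq 2$ and that every Rokhlin $\IR^{k-1}$-action on $\CD$ is semi-strongly self-absorbing. Fix a Rokhlin action $\gamma\colon\IR^k\curvearrowright\CD$; the crux is to show that $\gamma$ is semi-strongly self-absorbing. Granting this, the rest is immediate: if $\gamma'\colon\IR^k\curvearrowright\CD$ is another Rokhlin action, it too is semi-strongly self-absorbing, so by \autoref{rem:Rk-box} and \autoref{cor:dimrokc-cor} (with $A=\CD$ and the strongly self-absorbing \cstar-algebra $\CD$ itself, using $\CD\cong\CD\otimes\CD$) we get $\gamma\vscc\gamma\otimes\gamma'$ and $\gamma'\vscc\gamma'\otimes\gamma$; since $\gamma\otimes\gamma'$ and $\gamma'\otimes\gamma$ are conjugate via the tensor-factor flip, transitivity of $\vscc$ yields $\gamma\vscc\gamma'$.

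So it remains to prove that a Rokhlin action $\gamma\colon\IR^k\curvearrowright\CD$ is semi-strongly self-absorbing. First note that the generating $\IR^{k-1}$-action $\gamma^{(\msout{k})}$ again has the Rokhlin property; this follows from \autoref{prop:multi-Rokhlin} together with the inclusions $F_{\infty,\gamma}(\CD)\subseteq F_{\infty,\gamma^{(\msout{k})}}(\CD)$ and the obvious compatibility of the relative fixed-point algebras. By the induction hypothesis $\gamma^{(\msout{k})}$ is semi-strongly self-absorbing, hence has equivariant approximately inner flip: there are unitaries $w_n\in\CU(\CD\otimes\CD)$ with $w_nxw_n^*\to\Sigma(x)$ for all $x\in\CD\otimes\CD$, where $\Sigma$ is the tensor flip, and with $\|(\gamma^{(\msout{k})}\otimes\gamma^{(\msout{k})})_g(w_n)-w_n\|\to 0$ uniformly for $g$ in compact subsets of $\IR^{k-1}$. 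Since $\CD$ is strongly self-absorbing, the structure theory of (semi-)strongly self-absorbing actions from \cite{Szabo18ssa} reduces the claim to showing that the $w_n$ can be corrected so as to be additionally approximately invariant under the remaining generating flow, i.e.\ that $\gamma$ itself has equivariant approximately inner flip.

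This correction is the main obstacle, and I expect to handle it by a relative Kishimoto-type approximate cohomology-vanishing argument in the spirit of \cite[Section 3]{Kishimoto02}. Put $\beta=\gamma^{(k)}\otimes\gamma^{(k)}$ for the remaining generating flow on $\CD\otimes\CD$. The failure of $w_n$ to commute with $\beta$ is measured by the $\beta$-cocycles $z_n\colon t\mapsto w_n\beta_t(w_n^*)$; because the $w_n$ approximately normalize $\CD\otimes\CD$ (this is where the \emph{full} flip, rather than merely the half-flip, is used) and are approximately $\IR^{k-1}$-invariant, the $z_n$ are approximately central in $\CD\otimes\CD$ and approximately fixed by $\gamma^{(\msout{k})}\otimes\gamma^{(\msout{k})}$. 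Reindexing, they define a genuine $1$-cocycle for the induced flow on the relative central sequence algebra $F_{\infty,\,\gamma^{(\msout{k})}\otimes\gamma^{(\msout{k})}}(\CD\otimes\CD)$. The key point is that this induced flow has the Rokhlin property: this is exactly \autoref{prop:multi-Rokhlin}\ref{multi-Rokhlin:3} applied to the $k$-th generating flow of $\gamma$, pushed forward along the canonical equivariant embedding $F_{\infty,\gamma}(\CD)\to F_{\infty,\gamma\otimes\gamma}(\CD\otimes\CD)$, $x\mapsto x\otimes\eins$. A Kishimoto-style Rokhlin-tower interpolation then produces unitaries $x_n$ in the approximately central, approximately $\IR^{k-1}$-invariant part of $\CD\otimes\CD$ with $x_n^*\beta_t(x_n)\approx z_n(t)$ uniformly on compact sets; the perturbed unitaries $v_n=w_nx_n$ still implement $\Sigma$ (since $x_n$ is approximately central) and are now approximately invariant under $\beta$ as well as under $\gamma^{(\msout{k})}\otimes\gamma^{(\msout{k})}$, hence under all of $\gamma\otimes\gamma$. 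This establishes that $\gamma$ has equivariant approximately inner flip, completes the induction step, and with it the theorem.

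The step I expect to be hardest is the cohomology-vanishing argument of the last paragraph: it already contains (the strongly self-absorbing Kirchberg case of) Kishimoto's theorem as its $k=1$ instance, and the technical difficulties — trivializing the cocycle inside the relative commutant so that the flip property survives, and arranging enough uniformity for the diagonal/reindexing passage to the central sequence algebra — are precisely those that make the classification of Rokhlin flows delicate. The verification that restrictions of Rokhlin multi-flows are again Rokhlin, and the bookkeeping of which fixed-point and central sequence algebras carry the relevant induced Rokhlin flows, are routine but should likewise be done with some care.
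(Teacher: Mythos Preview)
Your overall architecture matches the paper's: induction in $k$, establishing approximately $\IR^k$-inner flip for $\gamma\otimes\gamma$ via a Kishimoto-type relative cohomology-vanishing argument, and then finishing with mutual absorption via \autoref{cor:dimrokc-cor}. The final step and the reduction to showing approximately inner flip are handled essentially as in the paper.

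However, there is a genuine gap in your crucial cohomology-vanishing step. A Rokhlin-tower interpolation of the kind you invoke (this is \autoref{lem:hilfslemma} in the paper) does \emph{not} trivialize an arbitrary approximately central, approximately $\IR^{k-1}$-invariant $\beta$-cocycle. It requires, for the given large period $T$, a unitary path of \emph{uniformly bounded length} from $\eins$ to the cocycle value $w_n\beta_{-T}(w_n^*)$, with the path itself approximately central and approximately $\IR^{k-1}$-invariant. You have not explained where such a path comes from, and for generic flip-implementing unitaries $w_n$ there is no reason one should exist with length independent of $T$ and $n$.

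The paper's solution is a trick you are missing: rather than arbitrary $w_n$, one invokes \cite[Proposition 3.6]{Szabo18ssa2} to write the flip as a limit of \emph{commutators} $y_nz_ny_n^*z_n^*$ with $y_n,z_n$ approximately $\IR^{k-1}$-invariant. One then uses \autoref{cor:dimrokc-cor} to see that $\gamma\otimes\gamma$ is equivariantly $\CO_\infty$-absorbing, hence unitarily regular, so by \cite[Proposition 2.19(iii)]{Szabo18ssa2} the limiting commutator is homotopic to $\eins$ inside the $\IR^{k-1}$-fixed part of the sequence algebra. Lifting this homotopy gives paths $x_n(t)$ of \emph{uniformly bounded} Lipschitz constant with $x_n(0)=\eins$ and $x_n(1)=y_nz_ny_n^*z_n^*$; then $u_n^T(t)=x_n(t)\beta_{-T}(x_n(t)^*)$ furnishes the path required by \autoref{lem:hilfslemma}, after a further correction via \autoref{lem:ssa-ac-paths} to make it approximately central along its entire length. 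This commutator-plus-unitary-regularity step is the heart of the argument and is absent from your sketch.
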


The approach for proving this result, at least in the way presented here, uses the theory of semi-strongly self-absorbing actions in a crucial way.
In such dynamical systems, one has a very strong control over certain (approximately central) unitary paths, which together with the Rokhlin property allows one to obtain a relative cohomology-vanishing-type statement.
This will be used to deduce inductively that the actions in the statement of \autoref{thm:multiflows} have approximately $\IR^k$-inner flip.
The desired uniqueness for such actions is then achieved by combining this fact with \autoref{cor:dimrokc-cor}, which is a special case of our main result, in a suitable way.

\begin{example}[see \cite{BratteliKishimotoRobinson07}]
Denote by $s_1, s_2, \dots$ the generators of the Cuntz algebra $\CO_\infty$.
Define a quasi-free flow $\gamma^0: \IR\curvearrowright\CO_\infty$ via $\gamma^0_t(s_1)=e^{2\pi i t}s_1$, $\gamma^0_t(s_2)=e^{-2\pi i\sqrt{2} t}s_2$, and $\gamma^0_t(s_j)=s_j$ for $j\geq 3$.
Then $\gamma^0$ has the Rokhlin property by \cite[Theorem 1.1]{BratteliKishimotoRobinson07}.

In particular, given $k\geq 1$ and any strongly self-absorbing Kirchberg algebra $\CD$, the action
\[
\id_\CD\otimes\big( \underbrace{\gamma^0\times\dots\times\gamma^0}_{k \text{ times}} \big): \IR^k\curvearrowright \CD\otimes\CO_\infty^{\otimes k} \cong \CD
\]
is a ($k$-)multi-flow with the Rokhlin property on $\CD$, and is in fact (very strongly) cocycle conjugate to every other one by \autoref{thm:multiflows}.
\end{example}

Let us now implement the strategy outlined above step by step.
We begin with the aforementioned cohomology-vanishing-type statement, which involves minimal assumptions about the underlying \cstar-algebras but otherwise very strong assumptions about the existence of certain unitary paths, which will naturally appear in our intended setup later.

\begin{lemma} \label{lem:hilfslemma}
Let $A$ be a separable unital \cstar-algebra.
Let $k \geq 1$ and let $\alpha: \IR^k\curvearrowright A$ be a continuous action with the Rokhlin property, and fix some $j\in\set{1,\dots,k}$. 

For every $\eps\greater 0$, $L\greater 0$ and $\CF\fin A$, there exists a $T\greater 0$ and $\CG\fin A$ with the following property:

If $\{w_t\}_{t\in\IR}\subset\CU(A)$ is any $\alpha^{(j)}$-1-cocycle satisfying
\[
\max_{a\in\CF}~\max_{0\leq t\leq T}~ \norm [w_t,a]\norm \leq\eps,\quad \max_{0\leq t\leq T}~ \max_{\vec{r}\in [0,1]^{k-1}}~ \norm w_t-\alpha^{(\msout{j})}_{\vec{r}}(w_t) \norm\leq\eps, 
\]
and moreover there exists some continuous path of unitaries $u: [0,1]\to\CU(A)$ with
\[
u(0)=\eins,\quad u(1)=w_{-T},\quad \ell(u)\leq L,
\]
\[
\max_{0\leq t\leq 1}~\max_{\vec{r}\in [0,1]^{k-1}} \norm u(t)-\alpha^{(\msout{j})}_{\vec{r}}(u(t)) \norm \leq \eps,
\]
\[
\max_{0\leq t\leq 1}~\max_{a\in\CG}~ \norm [u(t),a] \norm \leq \eps,
\]
then there exists a unitary $v\in \CU(A)$ 
satisfying
\[
\max_{0\leq t\leq 1} \norm w_t-v\alpha^{(j)}_t(v^*) \norm \leq 3\eps,
\]
\[
\quad \max_{a\in\CF}~ \norm [v,a] \norm \leq 3\eps,
\]
\[
\quad \max_{\vec{r}\in [0,1]^{k-1}}~ \norm v-\alpha^{(\msout{j})}_{\vec{r}}(v) \norm \leq 3\eps.
\]
\end{lemma}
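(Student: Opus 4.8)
The plan is to run a Kishimoto-type argument: average the given cocycle $w$ over a Rokhlin tower for the $j$-th flow relative to $\alpha^{(\msout{j})}$, and read off $v$ from this average. A point worth stressing up front is the order of quantifiers: one commits to $T$ and $\CG$ only at the very end, once the estimates are understood, and — crucially — one invokes the Rokhlin tower \emph{after} the cocycle $w$ and the path $u$ have been handed over. Since $A$ is unital, the sets $\{w_t\mid 0\le t\le T\}$ and $\{u(s)\mid 0\le s\le 1\}$ are then norm-compact, so the tower may be lifted to $A$ in a way that is almost central over these sets in addition to $\CF$. This is what makes the construction legitimate despite $\{w_t\}$ being an a priori uncontrolled compact set.

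First I would use the Rokhlin property through \autoref{prop:multi-Rokhlin}: for any prescribed $T>0$ there is a unital equivariant $*$-homomorphism $(\CC(\IR/T\IZ),\sigma^T)\to\big(F_{\infty,\alpha}(A)^{\tilde\alpha^{(\msout{j})}_\infty},\tilde\alpha^{(j)}_\infty\big)$. I would fix a genuine unitary in $A$ representing the image of the standard generator, and let $E$ denote its spectral measure, viewed as a projection-valued measure on $\IR/T\IZ$ with values in $A$; by the above compactness remark and the usual reindexing, $E$ can be arranged to be $\delta$-central over $\CF\cup\{w_t\}\cup\{u(s)\}$, $\delta$-invariant under $\alpha^{(\msout{j})}_{\vec r}$ for $\vec r\in[0,1]^{k-1}$, and $\delta$-shift-equivariant, $\alpha^{(j)}_t(E)\approx E(\,\cdot-t)$, uniformly for $|t|\le 1$, with $\delta$ an auxiliary tolerance to be absorbed. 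Next I would assemble a continuous ``painted cocycle'' $\hat w\colon\IR/T\IZ\to\CU(A)$ which on the arc identified with $[0,T-\lambda]$ is just $t\mapsto w_t$, and on the remaining arc of length $\lambda$ closes up continuously to $\hat w(0)=w_0=\eins$ by splicing in a reparametrisation of the path $u$ (suitably twisted by a bounded power of $\alpha^{(j)}$), the cocycle identity $w_{s+t}=w_s\alpha^{(j)}_s(w_t)$ being precisely what makes the two pieces match at the seam. The role of the length bound $\ell(u)\le L$ is to let this seam be absorbed: choosing $T$ large compared with $L/\eps$ forces the $E$-mass of the $\lambda$-arc and of its $t$-translates ($0\le t\le 1$) to be $O(\lambda/T)<\eps$, so the seam affects none of the three conclusions.

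I would then set $v:=\int_{\IR/T\IZ}\hat w(x)\,dE(x)$. Since $E$ is (approximately) a spectral measure and (approximately) central over the values of $\hat w$, one gets $v^*v,vv^*\approx\eins$, and $v$ may be perturbed to an honest unitary at negligible cost; near-centrality of $E$ and of each $\hat w(x)$ over $\CF$ yields $\|[v,a]\|\lesssim\eps$, and the $\alpha^{(\msout{j})}$-invariance of $E$ together with the hypotheses on $w$ and $u$ yields $\|v-\alpha^{(\msout{j})}_{\vec r}(v)\|\lesssim\eps$. For the coboundary relation I would use the shift-equivariance of $E$ to obtain
\[
v\,\alpha^{(j)}_t(v^*)\ \approx\ \int_{\IR/T\IZ}\hat w(x)\,\alpha^{(j)}_t\big(\hat w(x-t)\big)^*\,dE(x),\qquad 0\le t\le 1,
\]
and then observe that on the good arc the integrand is \emph{exactly} $w_t$ by the cocycle identity (the same telescoping as in the computation of $v^*v$), while on the seam arc it is bounded crudely by $2$ on a set of $E$-mass $O(\lambda/T)$; hence $v\,\alpha^{(j)}_t(v^*)\approx w_t$ for $t\in[0,1]$.

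Finally I would fix the parameters: choose $\lambda$ comparable to $L$, then $T$ so large that $O(\lambda/T)$ and the $\delta$-errors all fall below $\eps$, and take $\CG$ to be a finite refinement of $\CF$ together with a finite net of $\{\alpha^{(j)}_s(a)\mid a\in\CF,\ |s|\le\lambda\}$, so that the hypotheses on $u$ genuinely control the spliced piece of $\hat w$; tracking the constants then gives all three estimates with $3\eps$. I expect the main obstacle to be Step 2 — producing the continuous $\hat w$ across the seam out of the flow cocycle and the path while keeping the bookkeeping tight enough that the final errors are provably $\le 3\eps$, in particular pinning down exactly what $\CG$ must contain and how large $T$ must be taken. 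The telescoping estimate for $v\,\alpha^{(j)}_t(v^*)$, although the technical heart of the argument, is the standard Rokhlin-averaging computation and should be routine once Steps 1–2 are set up.
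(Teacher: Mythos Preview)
Your strategy is the same Kishimoto-type averaging that the paper carries out, but the execution diverges in two places worth noting.

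First, the spectral-measure step is not literally available: the lemma is stated for an arbitrary separable unital \cstar-algebra $A$, and a unitary in $A$ need not have a projection-valued spectral measure with values in $A$ (only in $A^{**}$). You would have to replace $E$ by a continuous partition of unity on the circle, pushed into $A$ via the approximate Rokhlin map, and define $v$ as the corresponding Riemann-type sum. This is fixable and standard, but as written it is a gap.

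Second, the paper sidesteps your whole bookkeeping of approximate Rokhlin elements by working \emph{exactly} in $\CC(\IR/T\IZ)\otimes A$ and only invoking the Rokhlin property at the very end. Concretely: with $T>L/\eps$ and $\kappa_s=u(s/T)$, the paper sets $v_s=w_s\,\alpha^{(j)}_s(\kappa_s)$ for $s\in[0,T]$; the cocycle identity gives $v_0=v_T=\eins$, so $v$ is an honest unitary in $\CC(\IR/T\IZ)\otimes A$. All three estimates are then verified exactly (no $\delta$-errors), using that $\kappa$ is $L/T\leq\eps$-Lipschitz and that $\CG$ is an $\eps/2$-net in $\{\alpha^{(j)}_{-s}(a):a\in\CF,\ 0\le s\le T\}$. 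Only afterwards is the Rokhlin map $\theta:\CC(\IR/T\IZ)\to F_{\infty,\alpha}(A)$ applied via the diagram $A\hookrightarrow\CC(\IR/T\IZ)\otimes A\to A_{\infty,\alpha}$, and a representative unitary in $A$ extracted. This eliminates your need to make the tower $\delta$-central over the (a posteriori) compact sets $\{w_t\}$ and $\{u(s)\}$, removes the seam parameter $\lambda$ entirely, and makes the $3\eps$ constants transparent rather than the product of an error budget. Your approach would work once patched, but the paper's packaging is substantially cleaner.
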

\begin{proof}
Let $T\greater 0$ and note that we have fixed $j\in\set{1,\dots,k}$ by assumption.
By some abuse of notation, let us view $\sigma^T$ as the $\IR^k$-action on $\CC(\IR/T\IZ)$ such that the $j$-th coordinate acts as the $\IR$-shift and all the other components act trivially.
In this way, any $*$-homomorphism as in \autoref{prop:multi-Rokhlin}\ref{multi-Rokhlin:3} can be viewed as an $\IR^k$-equivariant $*$-homomorphism from $\CC(\IR/T\IZ)$ to $F_{\infty,\alpha}(A)$.
In particular, denote such a homomorphism by $\theta$.
We can then obtain a commutative diagram of $\IR^k$-equivariant $*$-homomorphisms via
\begin{equation} \label{eq:hilfslemma:1}
\xymatrix{
(A,\alpha) \ar[rr] \ar[rd]_{d\mapsto \eins\otimes d} && (A_{\infty,\alpha},\alpha_\infty) \\
& \big( \CC(\IR/T\IZ)\otimes A, \sigma^T\otimes\alpha \big)  \ar[ru]_{f\otimes d \mapsto \theta(f)\cdot d} &
}
\end{equation}
We will keep this in mind for later.

Now let $\eps\greater 0$, $L\greater 0$ and $\CF\fin A$ be as in the statement.
Without loss of generality, we assume that $\CF$ consists of contractions.
We choose $T\greater \frac{L}{\eps}$ and $\CG\fin A$ any finite set of contractions containing $\CF$ that is $\eps/2$-dense in the compact subset
\begin{equation} \label{eq:hilfslemma:2}
\set{ \alpha^{(j)}_{-s}(a) \mid a\in\CF,\ 0\leq s\leq T}.
\end{equation}
We claim that these do the trick. 
We note that the rest of the proof below is almost identical to the proof of \cite[Theorem 2.1]{Kishimoto96_R} and \cite[Lemma 3.4]{Szabo17R}, respectively, except for some obvious modifications.

Assume that $\{w_t\}_{t\in\IR}\subset\CU(A)$ is an $\alpha^{(j)}$-1-cocycle satisfying
\begin{equation} \label{eq:hilfslemma:a-w-1}
\max_{a\in\CF}~\max_{0\leq t\leq T}~ \norm [w_t,a]\norm \leq\eps;
\end{equation}
\begin{equation} \label{eq:hilfslemma:a-w-2}
\max_{0\leq t\leq T}~ \max_{\vec{r}\in [0,1]^{k-1}}~ \norm w_t-\alpha^{(\msout{j})}_{\vec{r}}(w_t) \norm\leq\eps; 
\end{equation}
and moreover that there exists some continuous path of unitaries $u: [0,1]\to\CU(A)$ with
\begin{equation} \label{eq:hilfslemma:a-u-1}
u(0)=\eins,\quad u(1)=w_{-T},\quad \ell(u)\leq L;
\end{equation}
\begin{equation} \label{eq:hilfslemma:a-u-2}
\max_{0\leq t\leq 1}~\max_{\vec{r}\in [0,1]^{k-1}} \norm u(t)-\alpha^{(\msout{j})}_{\vec{r}}(u(t)) \norm \leq \eps;
\end{equation}
\begin{equation} \label{eq:hilfslemma:a-u-3}
\max_{0\leq t\leq 1}~\max_{a\in\CG}~ \norm [u(t),a] \norm \leq \eps.
\end{equation}
As $\ell(u)\leq L$, we may assume that $u$ is $L$-Lipschitz by passing to the arc-length parameterization if necessary.
We denote by $\kappa: [0,T]\to\CU(A)$ the path given by $\kappa_s=u(s/T)$, which is then Lipschitz with respect to the constant $L/T\leq \eps$.
Let us define a continuous path of unitaries $v: [0,T]\to\CU(A)$ via $v_s=w_s\alpha^{(j)}_s(\kappa_s)$.
Then by \eqref{eq:hilfslemma:a-u-1} it follows that $v(0)=v(T)=\eins$.
In particular, we may view $v$ as a unitary in $\CC(\IR/T\IZ)\otimes A$.

We have
\[
\begin{array}{ccl}
\dst\max_{a\in\CF}~ \norm [v,\eins\otimes a] \norm &=& \dst\max_{a\in\CF}~\max_{0\leq s\leq T}~ \norm [w_s\alpha^{(j)}_s(\kappa_s),a] \norm \\
&\leq& \dst\max_{a\in\CF}~\max_{0\leq s\leq T}~ \norm [w_s,a] \norm + \norm [\kappa_s,\alpha^{(j)}_{-s}(a)]\norm \\
&\stackrel{\eqref{eq:hilfslemma:a-w-1}}{\leq}& \dst\eps+\max_{a\in\CF}~\max_{0\leq s\leq T}~  \norm [\kappa_s,\alpha^{(j)}_{-s}(a)]\norm \\
&\stackrel{\eqref{eq:hilfslemma:2}}{\leq}& \dst 3\eps/2+\max_{b\in\CG}~ \norm [\kappa_s,b]\norm \\
&\stackrel{\eqref{eq:hilfslemma:a-u-3}}{\leq}& 5\eps/2.
\end{array}
\]
Moreover, we have
\[
\begin{array}{cl}
\multicolumn{2}{l}{ \dst \max_{\vec{r}\in [0,1]^{k-1}}~ \norm v-(\sigma^T\otimes\alpha)^{(\msout{j})}_{\vec{r}}(v) \norm }\\
=& \dst \max_{\vec{r}\in [0,1]^{k-1}}~ \norm v-(\id\otimes\alpha)^{(\msout{j})}_{\vec{r}}(v) \norm \\
=& \dst \max_{\vec{r}\in [0,1]^{k-1}}~ \max_{0\leq s\leq T}~ \norm v_s-\alpha^{(\msout{j})}_{\vec{r}}(v_s) \norm \\
=& \dst \max_{\vec{r}\in [0,1]^{k-1}}~ \max_{0\leq s\leq T}~ \norm w_s\alpha^{(j)}_s(\kappa_s)-\alpha^{(\msout{j})}_{\vec{r}}(w_s\alpha^{(j)}_s(\kappa_s)) \norm \\
=& \dst \max_{\vec{r}\in [0,1]^{k-1}}~ \max_{0\leq s\leq T}~ \norm w_s\alpha^{(j)}_s(\kappa_s)-\alpha^{(\msout{j})}_{\vec{r}}(w_s)\cdot \alpha^{(j)}_s(\alpha^{(\msout{j})}_{\vec{r}}(\kappa_s)) \norm \\
\leq& \dst \max_{\vec{r}\in [0,1]^{k-1}}~ \max_{0\leq s\leq T}~ \norm w_s-\alpha^{(\msout{j})}_{\vec{r}}(w_s) \norm + \norm \kappa_s- \alpha^{(\msout{j})}_{\vec{r}}(\kappa_s) \norm \\
\stackrel{\eqref{eq:hilfslemma:a-w-2},\eqref{eq:hilfslemma:a-u-2}}{\leq}& 2\eps.
\end{array}
\]
Lastly, let us fix $t\in [0,1]$ and $s\in [0,T]$.
If $s\geq t$, then we compute
\[
\begin{array}{cl}
\multicolumn{2}{l}{ \big( v(\sigma^T\otimes\alpha)^{(j)}_t(v^*) \big)(s) }\\
=& w_s\alpha^{(j)}_s(\kappa_s) \cdot \alpha^{(j)}_t\big( \alpha^{(j)}_{s-t}(\kappa_{s-t}^*) w_{s-t}^* \big) \\
=& w_s\cdot \alpha^{(j)}_s\big( \kappa_s\kappa_{s-t}^* \big) \alpha^{(j)}_t(w_{s-t}^*) \\
\stackrel{\eqref{eq:hilfslemma:a-u-1}}{=}_{\makebox[0pt]{\footnotesize\hspace{-2.5mm}$\eps$}} & w_s\alpha^{(j)}_t(w_{s-t}^*) \ = \ w_t.
\end{array}
\]
On the other hand, if $s\leq t$, then in particular $s\leq 1$ and $T-1\leq T+s-t\leq T$, and we compute
\[
\begin{array}{cl}
\multicolumn{2}{l}{ \big( v(\sigma^T\otimes\alpha)^{(j)}_t(v^*) \big)(s) }\\
=& w_s\alpha^{(j)}_s(\kappa_s)\cdot \alpha^{(j)}_t\big( \alpha^{(j)}_{T+s-t}(\kappa_{T+s-t}^*) w_{T+s-t}^* \big) \\
=& w_s\alpha^{(j)}_s(\kappa_s)\cdot \alpha^{(j)}_{T+s}(\kappa_{T+s-t}^*) \alpha^{(j)}_t(w_{T+s-t}^*) \\
\stackrel{\eqref{eq:hilfslemma:a-u-1}}{=}_{\makebox[0pt]{\footnotesize\hspace{-1.5mm}$2\eps$}} & w_s\cdot\eins\cdot\alpha^{(j)}_{T+s}(w_{-T}^*)\alpha^{(j)}_t(w_{T+s-t}^*) \\
=& w_s\alpha^{(j)}_{s}(w_{T})\alpha^{(j)}_t(w_{T+s-t}^*) \\
=& w_{T+s}\alpha^{(j)}_t(w_{T+s-t}^*)w_t^*\cdot w_t \ = \ w_t.
\end{array}
\]
Let us summarize what we have accomplished so far.
Starting from the existence of the $\alpha^{(j)}$-1-cocycle $\set{w_t}_{t\in\IR}$ and the unitary path $u$ with the prescribed properties, we have found a unitary $v\in\CU\big(\CC(\IR/T\IZ)\otimes A\big)$ satisfying
\begin{equation}
\max_{a\in\CF}~ \norm [v,\eins\otimes a] \norm \leq 5\eps/2;
\end{equation}
\begin{equation}
\max_{\vec{r}\in [0,1]^{k-1}}~ \norm v-(\sigma^T\otimes\alpha)^{(\msout{j})}_{\vec{r}}(v) \norm \leq 2\eps;
\end{equation}
\begin{equation}
\max_{0\leq t\leq 1}~ \norm w_t-v(\sigma^T\otimes\alpha)^{(j)}_t(v^*) \norm \leq 2\eps.
\end{equation}
By using the commutative diagram \eqref{eq:hilfslemma:1}, we may send $v$ into the sequence algebra of $A$, represent the resulting unitary by a sequence of unitaries in $A$, and then select a member of this sequence so that it will satisfy the properties in the claim with respect to the parameter $3\eps$.
This finishes the proof.
\end{proof}

Now record the following useful technical result about semi-strongly self-absorbing actions, which arises as a special case of \cite[Lemma 3.12]{Szabo18ssa2}:

\begin{lemma} \label{lem:ssa-ac-paths}
Let $G$ be a second-countable, locally compact group. 
Let $\CD$ be a separable, unital \cstar-algebra and $\gamma: G\curvearrowright\CD$ a semi-strongly self-absorbing action. 
For every $\eps>0$, $\CF\fin\CD$ and compact set $K\subset G$, there exist $\delta>0$ and $\CG\fin\CD$ with the following property:

Suppose that $u: [0,1]\to\CU(\CD)$ is a unitary path satisfying
\[
u(0)=\eins,\quad \max_{0\leq t\leq 1}~ \max_{g\in K}~ \norm u(t)-\gamma_g(u(t)) \norm \leq \delta,
\] 
and
\[
\max_{a\in\CG}~ \|[u(1), a]\|\leq\delta.
\]
Then there exists a unitary path $w: [0,1]\to\CU(\CD)$ satisfying
\[
w(0)=\eins,\quad w(1)= u(1),
\]
\[
\max_{g\in K}~ \norm w(t)-\gamma_g(w(t)) \norm \leq \eps,
\]
\[
\max_{0\leq t\leq 1}~\max_{a\in\CF}~\|[w(t), a]\|\leq\eps.
\] 
Moreover, we may choose $w$ in such a way that
\[
\|w(t_1)-w(t_2)\| \leq \|u(t_1)-u(t_2)\|\quad\text{for all}\quad 0\leq t_1, t_2\leq 1.
\]
\end{lemma}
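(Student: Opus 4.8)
The plan is to obtain this statement by direct specialization of the general perturbation result \cite[Lemma 3.12]{Szabo18ssa2}, so that the proof is really just a matter of recognizing the right instance; I will therefore spend the bulk of the argument checking that the hypotheses and conclusions match. First I would record that $\gamma$ is $\gamma$-absorbing: since $\gamma$ is semi-strongly self-absorbing, $(\CD,\gamma)$ is strongly cocycle conjugate to a strongly self-absorbing action $\gamma'$, and for the latter the equivariant first-factor embedding $\id_\CD\otimes\eins_\CD\colon(\CD,\gamma')\to(\CD\otimes\CD,\gamma'\otimes\gamma')$ is approximately $G$-unitarily equivalent to an isomorphism, whence $\gamma'\cc\gamma'\otimes\gamma'$ and thus $\gamma\cc\gamma\otimes\gamma$. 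Consequently the general lemma, which is phrased for a separable unital \cstar-algebra carrying a $\gamma$-absorbing action, applies with that algebra taken to be $(\CD,\gamma)$ itself.

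Next I would simply feed the data $(\eps,\CF,K)$ into \cite[Lemma 3.12]{Szabo18ssa2} applied to $(\CD,\gamma)$, producing a tolerance $\delta>0$ and a finite set $\CG\fin\CD$. The three standing hypotheses on $u$ in the present statement — namely $u(0)=\eins$, approximate $\gamma$-invariance on $K$ within $\delta$, and $u(1)$ approximately commuting with $\CG$ within $\delta$ — are exactly the input hypotheses of that lemma, and its output is a unitary path $w$ with $w(0)=\eins$, $w(1)=u(1)$, approximately $\gamma$-invariant on $K$ within $\eps$, and approximately commuting with $\CF$ within $\eps$; moreover the cited lemma already includes the pointwise metric estimate $\|w(t_1)-w(t_2)\|\le\|u(t_1)-u(t_2)\|$ among its conclusions. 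So the only genuine task is the purely bookkeeping one of verifying that the formulations align line by line after the substitution — in particular that the role of $\CF$ (the set on which the new path is approximately central) and of $\CG$ (the set on which the endpoint is assumed approximately central) are as stated — and I expect no real obstacle here.

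For completeness I would also indicate, in a sentence, how a self-contained proof runs, since that clarifies where the subtlety lies: one transports the path $u$ across an approximate equivariant identification $(\CD,\gamma)\cong(\CD\otimes\CD,\gamma\otimes\gamma)$ so that it becomes approximately $u(\cdot)\otimes\eins$, then conjugates by an approximately $(\gamma\otimes\gamma)$-invariant unitary path which is the identity at both endpoints and which rotates $u(\cdot)\otimes\eins$ into the approximately central copy $\eins\otimes\CD$ (using that $u(1)$ is approximately central, so the endpoint is barely moved), and finally transports back; the metric control is inherited because conjugation is isometric and the transporting maps can be taken to move points by as little as one wishes. The delicate point in that route — and the reason the statement is best quoted rather than reproved here — is simultaneously controlling the endpoint so that it stays \emph{exactly} $u(1)$ while keeping the path approximately central along its whole length, which is precisely what forces the hypothesis on $u(1)$ and the passage to a larger finite set $\CG$.
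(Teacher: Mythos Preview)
Your proposal is correct and matches the paper's own treatment exactly: the paper does not prove this lemma but simply records it as a special case of \cite[Lemma 3.12]{Szabo18ssa2}, which is precisely what you do (with the extra care of verifying $\gamma$-absorption so the citation applies).
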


We are now ready to prove the main result of this section:

\begin{proof}[Proof of \autoref{thm:multiflows}]
We will prove this via induction in $k$.
For this purpose, we will include the case $k=0$, where the claim is true for trivial reasons.

Now let $k\geq 1$ and assume that the claim is true for actions of $\IR^{k-1}$.
We will then show that the claim is also true for actions of $\IR^k$.

\textbf{Step 1:}
Let $\alpha: \IR^k\curvearrowright\CD$ be an action with the Rokhlin property.
In a similar fashion as in \cite[Proposition 3.5]{Kishimoto02}, we shall show that $\alpha$ has approximately $\IR^k$-inner flip.

Set $B=\CD\otimes\CD$ and $\beta=\alpha\otimes\alpha$.
Denote by $\Sigma$ the flip automorphism on $B$, which is equivariant with regard to $\beta$.
Note that $\beta$ is still a $\IR^k$-action on a strongly self-absorbing Kirchberg algebra with the Rokhlin property.
The $\IR^{k-1}$-action $\alpha^{(\msout{k})}$ is semi-strongly self-absorbing by the induction hypothesis.
Applying \cite[Proposition 3.6]{Szabo18ssa2}, we find a sequence of unitaries $y_n, z_n\in\CU(B)$ satisfying
\begin{equation} \label{eq:multiflows:yz-1}
\max_{\vec{r}\in [0,1]^{k-1}}~ \norm y_n-\beta^{(\msout{k})}_{\vec{r}}(y_n) \norm + \norm z_n-\beta^{(\msout{k})}_{\vec{r}}(z_n) \norm \stackrel{n\to\infty}{\longrightarrow} 0
\end{equation}
and
\begin{equation} \label{eq:multiflows:yz-2}
\Sigma(b)=\lim_{n\to\infty} \ad(y_nz_ny_n^*z_n^*)(b),\quad b\in B.
\end{equation}
Let us set $Y=[(y_n)_n]$ and $Z=[(z_n)_n]$ with $Y,Z\in B_{\infty,\beta^{(\msout{k})}}^{\beta^{(\msout{k})}_\infty}$.
Moreover set $X=YZY^*Z^*$.
Note that since $\CD$ is a Kirchberg algebra, \autoref{cor:dimrokc-cor} implies that $\beta$ is equivariantly $\CO_\infty$-absorbing.
By \cite[Proposition 2.19(iii)]{Szabo18ssa2}, the unitary $X$ is thus homotopic to the unit inside $B_{\infty,\beta^{(\msout{k})}}^{\beta^{(\msout{k})}_\infty}$.
Write $X=\exp(iH_1)\cdots\exp(iH_r)$ for certain self-adjoint elements $H_1,\dots,H_r\in B_{\infty,\beta^{(\msout{k})}}^{\beta^{(\msout{k})}_\infty}$.
Set $L'=\norm H_1\norm + \dots + \norm H_r \norm$.
For $l=1,\dots,r$, represent $H_l$ via a sequence of self-adjoint elements $h_{l,n}\in B$ with $\norm h_{l,n}\norm\leq\norm H_l\norm$.
We define a sequence of continuous paths $x_n: [0,1]\to\CU(B)$ via
\[
x_n(t) = \exp(ith_{1,n})\cdots\exp(ith_{r,n}).
\]
Then each of these paths is $L'$-Lipschitz.
By slight abuse of notation we write $X: [0,1]\to \CU\big( B_{\infty,\beta^{(\msout{k})}}^{\beta^{(\msout{k})}_\infty} \big)$ for $X(t)=[(x_n(t))_n]$, which is then continuous and satisfies $X(0)=\eins$ and $X(1)=X$.
Also denote $x_n=x_n(1)$ for all $n$.

Since we have $\Sigma(b)=XbX^*$ for all $b\in B$ and $\beta$ and $\Sigma\circ\beta^{(k)}_t=\beta^{(k)}_t\circ\Sigma$, one has $\Sigma(b)=\beta^{(k)}_{\infty,t}(X)b\beta^{(k)}_{\infty,t}(X^*)$ for all $t\in\IR$.
It follows that for all $t\in\IR$, one has that the element $X\beta^{(k)}_{\infty,t}(X^*)$ commutes with all elements in $B\subset B_\infty$.

Let us for the moment fix some number $T\greater 0$.
Define $u^T_n: [0,1]\to\CU(B)$ via $u^T_n(t)=x_n(t)\beta^{(k)}_{-T}(x_n(t)^*)$.
Then $u^T_n$ is a unitary path starting at the unit and with Lipschitz constant $L\leq 2L'$.
We have
\[
\max_{0\leq t\leq 1}~\max_{\vec{r}\in [0,1]^{k-1}}~ \norm u^T_n(t)-\beta^{(\msout{k})}_{\vec{r}}(u_n^T(t)) \norm \stackrel{n\to\infty}{\longrightarrow} 0
\]
as $\beta_{-T}^{(k)}\circ\beta^{(\msout{k})}_{\vec{r}}=\beta^{(\msout{k})}_{\vec{r}}\circ\beta^{(k)}_{-T}$ and the elements $x_n(t)$ are approximately $\beta^{(\msout{k})}$-invariant by construction, and
\[
\norm[u^T_n(1),b]\norm = \norm [x_n\beta^{(k)}_{-T}(x_n^*), b] \norm \stackrel{n\to\infty}{\longrightarrow} 0 \quad\text{for all } b\in B.
\]
Due to \autoref{lem:ssa-ac-paths}, we may replace the unitary paths $u^T_n$ by ones which become approximately central along the entire path and retain all the other properties.
In other words, by changing the path $u_n$ on $(0,1)$, we may in fact assume
\[
\max_{0\leq t\leq 1}~ \norm[u^T_n(t),b]\norm \stackrel{n\to\infty}{\longrightarrow} 0 \quad\text{for all } b\in B.
\]
Let us consider the sequence of $\beta^{(k)}$-1-cocycles $\{w^{(n)}_t\}_{t\in\IR}$ given by $w^{(n)}_t = x_n\beta^{(k)}_t(x_n^*)$.
Then by what we have observed before, we have
\[
\max_{0\leq t\leq T}~ \norm [w^{(n)}_t,b] \norm \stackrel{n\to\infty}{\longrightarrow} 0,\quad b\in B,
\]
as well as
\[
\max_{0\leq t\leq T}~\max_{\vec r\in [0,1]^{k-1}}~ \norm w^{(n)}_t-\beta^{(\msout{k})}_{\vec r}(w^{(n)}_t) \norm \leq 2\cdot \max_{\vec r\in [0,1]^{k-1}}~ \norm x_n-\beta^{(\msout{k})}_{\vec r}(x_n) \norm \stackrel{n\to\infty}{\longrightarrow} 0. 
\]
This puts us into the position to apply \autoref{lem:hilfslemma}.
Given some small tolerance $\eps\greater 0$ and $\CF\fin\CD$, we can choose $T\greater 0$ and $\CG\fin\CD$ with respect to the constant $L=2L'$ and with $(B,\beta)$ in place of $(A,\alpha)$.
Without loss of generality, we choose $\CF$ in such a way that
\begin{equation} \label{eq:multiflows:F}
\Sigma(\CF)=\CF.
\end{equation}
Then the cocycles $\{w^{(n)}_t\}_{t\in\IR}$ and the unitary paths $u^T_n$ (in place of $\set{w_t}_{t\in\IR}$ and $u$ in \autoref{lem:hilfslemma}) will eventually satisfy the assumptions in \autoref{lem:hilfslemma} for large enough $n$.
By the conclusion of the statement, one finds a unitary $v_n\in\CU(B)$ such that
\begin{equation} \label{eq:multiflows:v-1}
\max_{0\leq t\leq 1} \norm w^{(n)}_t-v_n\beta^{(k)}_t(v_n^*) \norm = \max_{0\leq t\leq 1} \norm x_n\beta^{(k)}_t(x_n)^*-v_n\beta^{(k)}_t(v_n^*) \norm \leq 3\eps;
\end{equation}
\begin{equation} \label{eq:multiflows:v-2}
\max_{b\in\CF}~ \norm [v_n,b] \norm \leq 3\eps;
\end{equation}
\begin{equation} \label{eq:multiflows:v-3}
\max_{\vec{r}\in [0,1]^{k-1}}~ \norm v_n-\alpha^{(\msout{k})}_{\vec{r}}(v_n) \norm \leq 3\eps.
\end{equation}
We set $\IU_n = v_n^*x_n$, which is yet another sequence of unitaries in $B$.
Note that \eqref{eq:multiflows:v-1} translates to
\[
\max_{0\leq t\leq 1} \norm \IU_n-\beta^{(k)}_t(\IU_n) \norm \leq 3\eps.
\]
Together with \eqref{eq:multiflows:v-3} and $X\in B^{\beta^{(\msout{k})}_\infty}_{\infty,\beta^{(\msout{k})}}$ this yields
\[
\max_{\vec{r}\in [0,1]^k}~ \norm \IU_n-\beta_{\vec{r}}(\IU_n) \norm \leq 7\eps 
\]
for large enough $n$.
Finally, if we combine \eqref{eq:multiflows:yz-2}, \eqref{eq:multiflows:F} and \eqref{eq:multiflows:v-2}, we obtain
\[
\max_{b\in\CF}~ \norm \Sigma(b)-\IU_nb\IU_n^* \norm \leq 4\eps 
\]
for sufficiently large $n$.
Since $\eps\greater 0$ was an arbitrary parameter and $\CF\fin B$ was arbitrary as well, we see that the flip automorphism $\Sigma$ on $B$ is indeed approximately $\IR^k$-inner.

\textbf{Step 2:} Let $\alpha: \IR^k\curvearrowright\CD$ be an action with the Rokhlin property.
Due to the first step, $\alpha$ has approximately $\IR^k$-inner flip.
By \cite[Proposition 3.3]{Szabo18ssa}, it follows that the infinite tensor power action $\alpha^{\otimes\infty}: \IR^k\curvearrowright\CD^{\otimes\infty}$ is strongly self-absorbing.
In view of \autoref{rem:Rk-box}, we may apply \autoref{cor:dimrokc-cor} to $\alpha$ and $\alpha^{\otimes\infty}$ in place of $\gamma$, and see that
\[
\alpha\scc\alpha\otimes\alpha^{\otimes\infty} \cong \alpha^{\otimes\infty},
\]
which implies that $\alpha$ is semi-strongly self-absorbing.

\textbf{Step 3:} For $i=0,1$, let $\alpha^{(i)}: \IR^k\curvearrowright\CD$ be two actions with the Rokhlin property.
By the previous step, they are semi-strongly self-absorbing.
If we apply \autoref{cor:dimrokc-cor} to $\alpha^{(0)}$ in place of $\alpha$ and $\alpha^{(1)}$ in place of $\gamma$, then it follows that $\alpha^{(0)}\vscc\alpha^{(0)}\otimes\alpha^{(1)}$.
If we exchange the roles of $\alpha^{(0)}$ and $\alpha^{(1)}$ and repeat this argument, we conclude $\alpha^{(0)}\vscc\alpha^{(1)}$.

This finishes the induction step and the proof.
\end{proof}

We observe the following consequence as a combination of all of our main results for $\IR^k$-actions; this is new even for ordinary flows.

\begin{cor} \label{cor:multiflow-spi}
Let $A$ be a separable \cstar-algebra with $A\cong A\otimes\CO_\infty$.
Suppose that $\alpha: \IR^k\curvearrowright A$ is a multi-flow.
The following are equivalent:
\begin{enumerate}[label=\textup{(\roman*)},leftmargin=*]
\item $\alpha$ has the Rokhlin property;
\item $\alpha$ has finite Rokhlin dimension with commuting towers;
\item $\alpha\vscc\alpha\otimes\gamma$ for any multi-flow $\gamma:\IR^k\curvearrowright\CO_\infty$ with the Rokhlin property;
\item $\alpha\vscc\alpha\otimes\gamma$ for every multi-flow $\gamma:\IR^k\curvearrowright\CO_\infty$ with the Rokhlin property.
\end{enumerate}
\end{cor}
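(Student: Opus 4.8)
The plan is to establish the cyclic chain of implications $\mathrm{(i)}\Rightarrow\mathrm{(ii)}\Rightarrow\mathrm{(iv)}\Rightarrow\mathrm{(iii)}\Rightarrow\mathrm{(i)}$, so that essentially all of the substance is imported from \autoref{thm:multiflows}, \autoref{cor:dimrokc-cor} and \autoref{rem:Rk-box}. For $\mathrm{(i)}\Rightarrow\mathrm{(ii)}$: by the remark following \autoref{def:Rp-mf}, the Rokhlin property of $\alpha$ is equivalent to $\dimrok(\alpha)=0$, and since a unital c.p.c.\ order zero map is automatically a $*$-homomorphism while the commuting-tower requirement is vacuous for a single map, this forces $\dimrokc(\alpha)=0<\infty$.

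For $\mathrm{(ii)}\Rightarrow\mathrm{(iv)}$: let $\gamma\colon\IR^k\curvearrowright\CO_\infty$ be any multi-flow with the Rokhlin property, which by \autoref{thm:multiflows} is semi-strongly self-absorbing. By \autoref{rem:Rk-box}, the sequence $\CS=\big((n!)\IZ^k\big)_n$ is a residually compact approximation of $\IR^k$ by (automatically normal) subgroups with $\asdim(\square_\CS\IZ^k)<\infty$; since $A\cong A\otimes\CO_\infty$, $\CO_\infty$ is strongly self-absorbing, and $\dimrokc(\alpha,\CS)\le\dimrokc(\alpha)<\infty$, \autoref{cor:dimrokc-cor} yields $\alpha\vscc\alpha\otimes\gamma$. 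As $\gamma$ was arbitrary, this is $\mathrm{(iv)}$, and then $\mathrm{(iv)}\Rightarrow\mathrm{(iii)}$ is immediate since such $\gamma$ exist (for instance $\gamma^0\times\dots\times\gamma^0$ with $k$ factors, under $\CO_\infty^{\otimes k}\cong\CO_\infty$, where $\gamma^0$ is the Rokhlin flow of \cite{BratteliKishimotoRobinson07}).

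The only implication needing a new idea is $\mathrm{(iii)}\Rightarrow\mathrm{(i)}$. Here I would fix a multi-flow $\gamma\colon\IR^k\curvearrowright\CO_\infty$ with the Rokhlin property and $\alpha\vscc\alpha\otimes\gamma$, and first show that $\alpha\otimes\gamma$ again has the Rokhlin property. For this, observe that the assignment $[(x_n)_n]\mapsto[(\eins_A\otimes x_n)_n]$ defines a unital $\IR^k$-equivariant $*$-homomorphism $F_{\infty,\gamma}(\CO_\infty)\to F_{\infty,\alpha\otimes\gamma}(A\otimes\CO_\infty)$: a representing sequence $(\eins_A\otimes x_n)_n$ commutes exactly with $A\otimes\eins$ and asymptotically with $\eins_A\otimes\CO_\infty$, hence asymptotically with all of $A\otimes\CO_\infty$; it respects the annihilators; and $(\alpha_g\otimes\gamma_g)(\eins_A\otimes x_n)=\eins_A\otimes\gamma_g(x_n)$ shows the map is equivariant and carries the continuous part into the continuous part. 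Composing this homomorphism with the Rokhlin embeddings of $\big(\CC(\IR^k/p\IZ^k),\IR^k\text{-shift}\big)$ into $F_{\infty,\gamma}(\CO_\infty)$ — which exist for every $p>0$ as $\gamma$ has the Rokhlin property, cf.\ \autoref{prop:multi-Rokhlin} — produces, for every $p>0$, a unital equivariant $*$-homomorphism $\big(\CC(\IR^k/p\IZ^k),\IR^k\text{-shift}\big)\to F_{\infty,\alpha\otimes\gamma}(A\otimes\CO_\infty)$, i.e.\ $\dimrok(\alpha\otimes\gamma,p\IZ^k)=0$. Finally, since the Rokhlin property is an invariant of cocycle conjugacy — the pair $\big(F_{\infty,\alpha}(A),\tilde\alpha_\infty\big)$ depends only on the cocycle conjugacy class of $\alpha$ — and $\alpha\vscc\alpha\otimes\gamma$, we conclude that $\alpha$ has the Rokhlin property.

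I do not expect any serious obstacle here: virtually all of the work has already been carried out in \autoref{thm:multiflows} and \autoref{cor:dimrokc-cor}, and the only fresh ingredient is the routine transfer of the Rokhlin property along the first-factor embedding used in $\mathrm{(iii)}\Rightarrow\mathrm{(i)}$.
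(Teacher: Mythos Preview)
Your argument is correct and matches the paper's approach, which simply records the corollary as a direct consequence of \autoref{thm:multiflows} and \autoref{cor:dimrokc-cor}. The only cosmetic caveat is that $\eins_A\otimes x_n$ does not literally lie in $A\otimes\CO_\infty$ when $A$ is non-unital; one clean fix is to phrase the Rokhlin property via \autoref{lem:dimrokc-eq}\ref{lem:dimrokc-eq:4} and note that tensoring a sequentially split equivariant first-factor embedding by $\id_A$ remains sequentially split.
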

\begin{proof}
This follows directly from \autoref{thm:multiflows} and \autoref{cor:dimrokc-cor}.
\end{proof}

Once we combine \autoref{cor:multiflow-spi} and \autoref{thm:multiflows}, we obtain \autoref{Thm-C} as a direct consequence.

The following remains open:

\begin{question}
Let $\alpha: \IR^k\curvearrowright A$ be a multi-flow on a Kirchberg algebra.
Suppose that for every $\vec r\in\IR^k$, the flow on $A$ given by $t\mapsto \alpha_{t\vec r}$ has the Rokhlin property.
Does it follow that $\alpha$ has the Rokhlin property?
\end{question}


\bibliographystyle{gabor}
\bibliography{master}

\end{document}